\numberwithin{equation}{section}
\newtheorem{thm}{Theorem}[section]
\newtheorem{lem}[thm]{Lemma}
\newtheorem{prop}[thm]{Proposition}
\newtheorem{cor}[thm]{Corollary}
\theoremstyle{definition}
\theoremstyle{remark}
 \newtheorem{rmk}[thm]{Remark}
\newcommand{\R}{\mathbb{R}}
\newcommand{\C}{{\mathbb C}}
\newcommand{\hn}{{\mathbb{H}^n}}
\newcommand{\Cn}{{\mathbb C}^n}
\newcommand{\Rn}{\mathbb{R}^{n}}
\newcommand{\diam}{\operatorname{diam}}
\newcommand{\im}{\operatorname{Im}}
\newcommand{\leb}{\mathcal{L}^{2n+1}}
\newcommand{\lone}{\mathcal{L}^{1}}
\newcommand{\tco}{t\textup{-co}\:}
\newcommand{\st}{\operatorname{St}}
\newcommand{\I}{\mathcal{I}}
\newcommand{\rot}{\mathcal{R}}
\newcommand{\interior}{\operatorname{int}}
\newcommand{\spa}{\operatorname{span}}
\newcommand{\p}{\partial}
\newcommand{\scal}[2]{\langle{#1},{#2}\rangle}
\newcommand{\ball}{\mathscr B}
\newcommand{\lip}{\operatorname{Lip}}
\newcommand{\si}{\sigma}
\newcommand{\vp}{\varphi}
\newcommand{\eps}{\varepsilon}
\begin{document}

\title[Isodiametric sets in the Heisenberg group]{Isodiametric sets in the Heisenberg group}

\author[G.P. Leonardi]{G.P. Leonardi}
\address[G.P. Leonardi]{Universit\`a di Modena e Reggio Emilia,
  Dipartimento di Matematica Pura ed Applicata, via Campi 213/b, 41100
  Modena, Italy}
\email{gianpaolo.leonardi@unimore.it}

\author[S. Rigot]{S. Rigot}
\address[S. Rigot]{Universit\'e de Nice Sophia-Antipolis, Laboratoire J.-A. Dieudonn\'e, CNRS-UMR 6621, Parc Valrose, 06108 Nice cedex 02, France}
\email{rigot@unice.fr}

\author[D. Vittone]{D. Vittone}
\address[D. Vittone]{Universit\`a di Padova, Dipartimento di Matematica Pura ed Applicata, via Trieste 63, 35121 Padova, Italy}
\email{vittone@math.unipd.it}
\thanks{The first and third authors have been supported by E.C. 
project ``GALA'', MIUR, GNAMPA project ``Metodi geometrici per analisi in
  spazi non Euclidei: spazi metrici doubling, gruppi di Carnot e spazi
  di Wiener'' (2009) and, respectively, by the University of Modena
  and Reggio Emilia and the University of Padova, Italy.} 
\thanks{The second author wishes to thank the Department of Pure and Applied
  Mathematics, University of Modena and Reggio Emilia, where 
  part of the work was done, and also the GNAMPA project for financial support. The first and third authors are pleased to thank the
  Laboratoire J.-A. Dieudonn\'e, Universit\'e de Nice
  Sophia-Antipolis, for the hospitality during the completion of a
  first draft of the paper.}

\keywords{Isodiametric problem, Heisenberg group}

\subjclass[2000]{53C17, 28A75, 49Q15, 22E30}

\begin{abstract}
In the sub-Riemannian Heisenberg group equipped with its
Car-\- not-Carath\'eodory metric and with a Haar measure, we
consider \textit{isodiametric sets}, i.e. sets maximizing the measure
among all sets with a given diameter. In particular, given an
isodiametric set, and up to negligible sets, we prove that its
boundary is given by the graphs of two locally Lipschitz
functions. Moreover, in the restricted class of \textit{rotationally invariant
sets}, we give a quite complete characterization of any compact
(rotationally invariant) isodiametric set. More specifically, its
Steiner symmetrization with respect to the $\C^n$-plane is
shown to coincide with the Euclidean convex hull of a CC-ball. At the
same time, we also prove quite unexpected non-uniqueness results.
\end{abstract}

\maketitle

\section{Introduction}
\label{sec:intro}

The classical isodiametric inequality in the Euclidean space says that
balls maximize the volume among all sets with a given diameter. This
was originally proved by Bieberbach~\cite{bieberbach} in $\R^2$ and
by Urysohn~\cite{urysohn} in $\R^n$, see
also~\cite{burago-zalgaller}. In this paper we are interested in the
case of the Heisenberg group $\hn$ equipped with its
Carnot-Carath\'eodory distance $d$ and with the Haar measure $\leb$
(see Section~\ref{sec:heisenberg} for the definitions). Our aim is to study \textit{isodiametric sets},
  i.e. sets maximizing the measure among sets with a given diameter.
\smallskip

Recalling that the homogeneous dimension of $\hn$ is $2n+2$, we define the isodiametric constant $C_I$ by 
\begin{equation*}
 C_I = \sup \leb (F)/(\diam F)^{2n+2}
\end{equation*}
where the supremum is taken among all sets $F\subset \hn$ with
positive and finite diameter. Sets realizing the supremum do exist,
see~\cite{rigot} or Theorem~\ref{thm:existence} below. Since the
closure of any such set is a compact set that still realizes the
supremum, we consider the class $\I$ of compact isodiametric sets,
\begin{equation*}
 \I = \{ E\subset \hn~;~E\text{ compact},~\diam E>0,~\leb (E) = C_I~(\diam E)^{2n+2} \}~.
\end{equation*}
In other words, due to the presence of dilations in $\hn$, $\I$ denotes the class of compact sets that maximize the $\leb$-measure among all sets with the same diameter.
\smallskip

In contrast to the Euclidean case, balls in $(\hn,d)$ are not isodiametric (\cite{rigot}) and we shall give in this paper some further and refined evidence that the situation is indeed quite different from the Euclidean one.
\smallskip

Before describing our main results let us recall some classical motivations and consequences coming from the study of isodiametric type problems. First the isodiametric constant $C_I$ coincides with the ratio between the measure $\leb$ and the $(2n+2)$-dimensional Hausdorff measure $\mathcal{H}^{2n+2}$ in $(\hn,d)$, namely,
\begin{equation*} 
 \leb = C_I~\mathcal{H}^{2n+2}~,
\end{equation*}
where $\mathcal{H}^{2n+2}(A) = \lim_{\delta\downarrow 0} \inf \left\{ \sum_i (\diam A_i)^{2n+2}~;~A\subset \cup_i A_i~,~\diam A_i\leq \delta\right\}$. This can actually be generalized to any Carnot group equipped with a homogeneous distance (\cite{rigot}), and for abelian Carnot groups one recovers the well-known Euclidean situation. 
\smallskip

As a consequence, the knowledge of the numerical value of the isodiametric constant $C_I$, or equivalently the explicit description of isodiametric sets, gives non trivial information about the geometry of the metric space $(\hn,d)$ and about the measure $\mathcal{H}^{2n+2}$ which may be considered as a natural measure from the metric point of view.
\smallskip

Let us also mention that there are some links with the Besicovitch 1/2-problem which is in turn related to the study of the connections between densities and rectifiability. See~\cite{preisstiser} for an introduction and known results about the Besicovitch 1/2-problem and \cite{rigot} for the connection between the isodiametric problem in Carnot groups and the Besicovitch 1/2-problem.
\smallskip

Our main results in the present paper are a regularity property for sets in $\I$ and a rather complete solution to a restricted  isodiametric problem within the class of so-called rotationally invariant sets.
\smallskip

Let us first describe our regularity result. We shall prove that given $E\in\I$ then $\overline{\interior E}$ is still a compact isodiametric set with the same diameter as $E$ and with locally Lipschitz boundary. More precisely, identifying $\hn$ with $\Cn \times \R$ (see Section \ref{sec:heisenberg}), we prove that
\begin{equation*}
\overline{\interior E} = \{[z,t]\in \hn~;~z\in \overline U,~f^-(z)\leq t \leq f^+(z)\}
 \end{equation*}
for some open set $U$ in $\Cn$ and some continuous maps $f^-$, $f^+: \overline U \rightarrow \R$ that are locally Lipschitz continuous on $U$. See Theorem~\ref{thm:regisodiam} for a complete statement.
\smallskip

This regularity property will actually follow from a slightly more general result. We will prove that a set $E\in\I$ must satisfy the following necessary condition, 
\begin{equation} \tag{NC} \label{nc}
 \text{for all } p\in \p E, \text{ there exists } q \in \p E \text{ such that } d(p,q) = \diam E~,
\end{equation}
see Proposition~\ref{prop:neccdtrefined}, and is $t$-convex, see Subsection~\ref{subsec:geometric-transf} for the definition of $t$-convexity and Proposition~\ref{prop:tconv-isodiam}. Independently from the isodiametric problem, the property \eqref{nc} together with $t$-convexity turn out to imply the regularity properties sketched above. See Theorem~\ref{thm:regboundary}. 
\smallskip

As already mentioned, one knows that balls in $(\hn,d)$ are not isodiametric and isodiametric sets in $(\hn,d)$ are actually not explicitly known so far. This question turns out to be a challenging and rather delicate one. However, restricting ourselves to the family $\rot$ of so-called rotationally invariant sets, we are able to give a rather complete picture of the situation for compact isodiametric sets within this class. As we shall explain below, this picture will give some further information about the class $\I$. This may also hopefully give some insight towards a complete solution of the general isodiametric problem in $(\hn,d)$.
\smallskip

We shall denote by $\I_\rot$ the class of compact sets in $\rot$ that
are isodiametric within the class $\rot$. See
Section~\ref{sec:isodiampb} for the definition of the class $\rot$ and
of this restricted isodiametric problem. First it is not hard
to check that sets in $\I_\rot$ satisfy \eqref{nc} and are $t$-convex, see
Proposition~\ref{prop:neccdtrefined} and Proposition~\ref{prop:tconv-isodiam}, and hence satisfy the regularity
properties of Theorem~\ref{thm:regboundary}. Next our main
specific result concerning sets $E\in \I_\rot$ is the characterization
of their Steiner symmetrization $\st E$ with respect to the
$\Cn$-plane (see Subsection~\ref{subsec:geometric-transf} for the
definition of $\st E$). We prove that if $E\in \I_\rot$ then $\st E$
belongs to $\I_\rot$ and has the same diameter as $E$. Moreover we
prove that $\st E$ is actually uniquely determined once the diameter
of $E$ is fixed, i.e., $\st E= A_{\diam E}$ for some peculiar set $A_{\diam E}$, see Theorem~\ref{thm:rotisodiamsets}.
\smallskip

Given $\lambda>0$, the set $A_\lambda$ can be guessed via the
following argument. One starts with the ball in $(\hn,d)$ centered at
the origin and with diameter $\lambda$. As already said it does not
satisfy \eqref{nc}. Thus one can enlarge it around points where \eqref{nc} fails and get a set with still the same diameter but
with greater measure. One can actually try to enlarge it as much as
possible without increasing the diameter, remaining in the class $\rot$, and preserving the property that it coincides with its Steiner
symmetrization with respect to the $\Cn$-plane. In such a way,
one ends up with a maximal set $A_\lambda$ that satisfies \eqref{nc}. It turns out that this set is the closed convex hull (in the Euclidean sense when identifying $\hn$ with $\R^{2n+1}$) of the ball in $(\hn,d)$ centered at the origin and with diameter $\lambda$. See Figure~\ref{ccball}.
\begin{figure}[h!tbp]
  \centering
  \includegraphics[width=5cm]{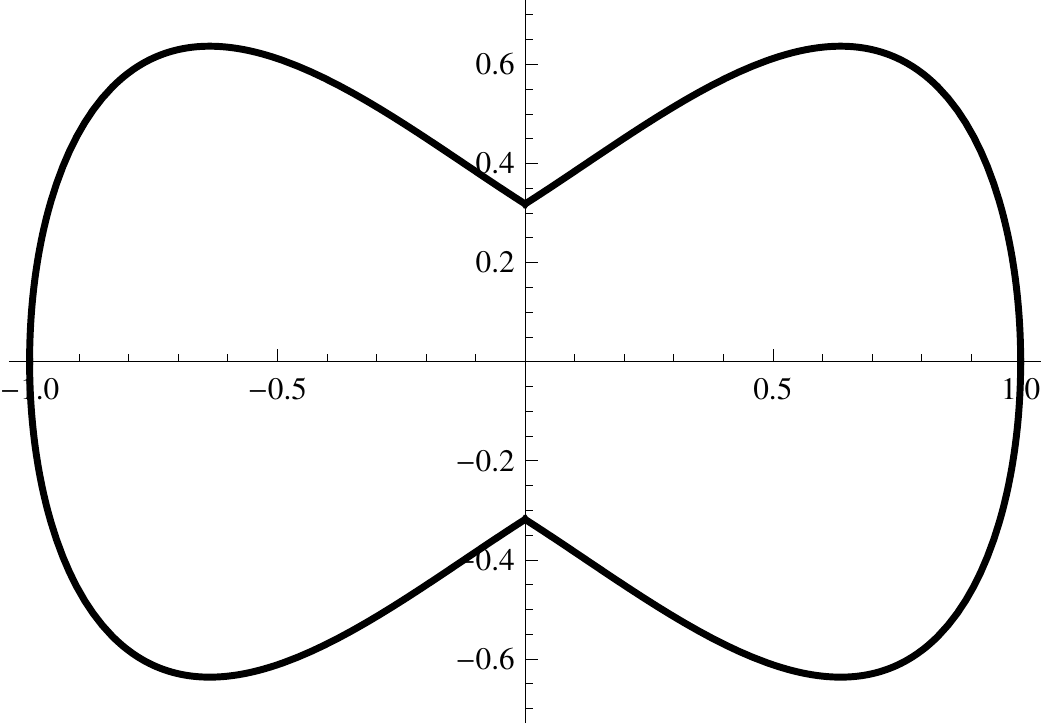} \qquad
  \includegraphics[width=5cm]{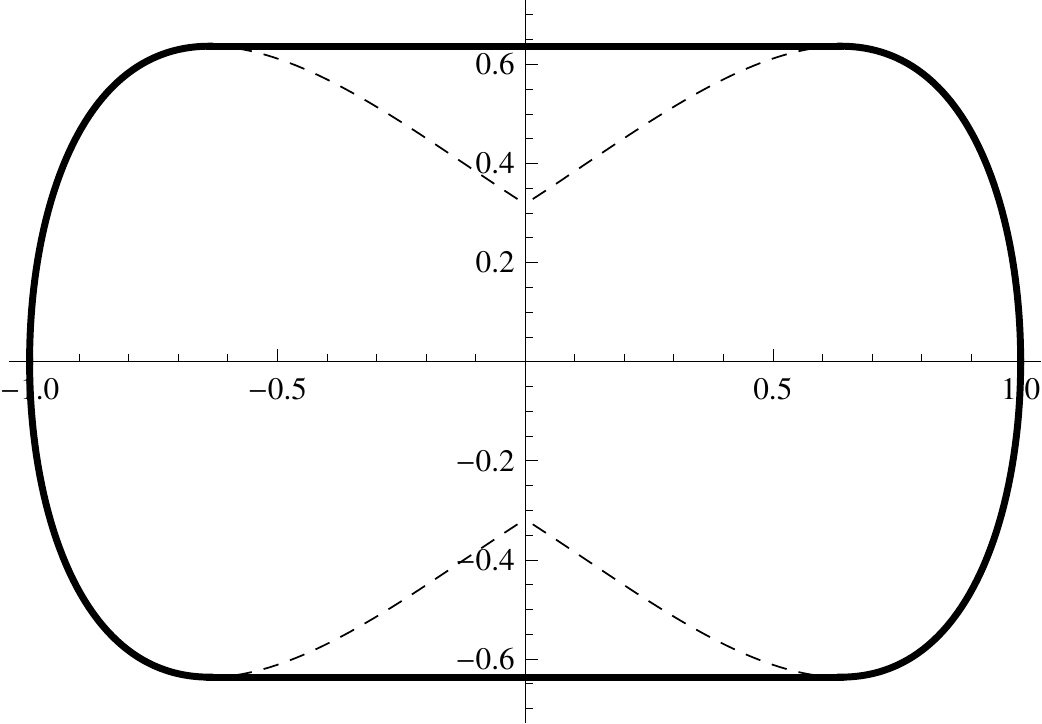}
  \caption{The intersection of the boundary $\partial
    B(0,1)$ of the unit ball (left) and of the boundary $\partial
    A_2$ of its Euclidean convex hull (right) with a plane containing the vertical $t$-axis.}
  \label{ccball}
\end{figure}

\smallskip

We also construct small suitable perturbations of the set $A_\lambda$ that preserve its Lebesgue measure and its diameter, see Proposition~\ref{prop:Aphi}. Considering rotationally invariant perturbations, this gives the non uniqueness of sets in $\I_\rot$. This non uniqueness has to be understood in an ``essential'' sense, i.e., also up to left translations and dilations. See Corollary~\ref{cor:nonuniqueness}.
\smallskip

Finally, considering non rotationally invariant pertubations, one gets the existence of sets in $\I$ that are not rotationally invariant, even modulo left translations, see Corollary~\ref{cor:nonrotinvariant}. This gives one more significant difference with the Euclidean situation. See Figure~\ref{A2pert}.
\smallskip
\begin{figure}[h!tbp]
  \centering
  \includegraphics[width=5cm]{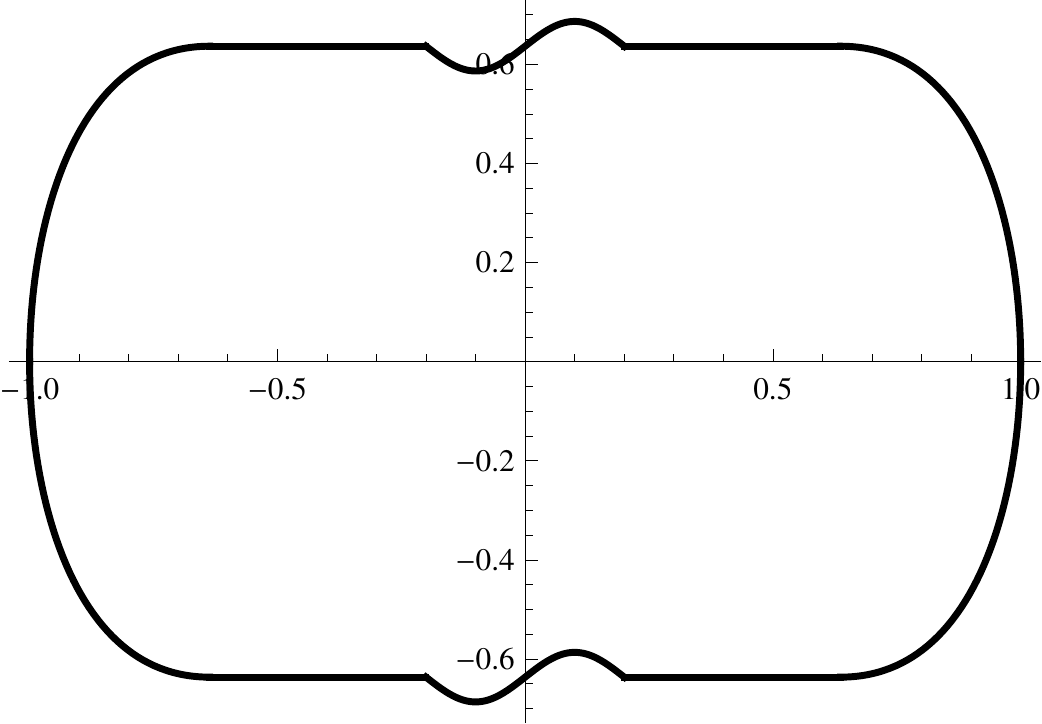}
  \caption{The intersection of the boundary of a perturbation of $A_2$ with
    a plane containing the vertical $t$-axis.} 
  \label{A2pert}
\end{figure}
\smallskip

The paper is organized as follows. In Section~\ref{sec:heisenberg} we introduce the Heisenberg group $\hn$ and recall basic facts about the Carnot-Carath\'eodory distance $d$ and balls in $(\hn,d)$. We also introduce $t$-convexification and the Steiner symmetrization with respect to the $\Cn$-plane. Section~\ref{sec:isodiampb} is devoted to existence and regularity results, while in Section~\ref{sec:rotisodiamsets} we prove our more specific results about the isodiametric problem restricted to the class $\rot$.

\section{Notations and preliminary results}
\label{sec:heisenberg}

\subsection{The Heisenberg group}
\label{subsec:heis}
The Heisenberg group $\hn$ is a connected and simply connected Lie
group with stratified Lie algebra (see e.g. \cite{stein,CDPT}). We identify it with $\Cn \times \R$
and denote points in $\hn$ by $[z,t]$, where $z=(z_1,\dots,z_n) \in
\Cn$ and $t\in\R$. The group law is 
\begin{equation*}
[z,t]\cdot [z',t'] = [z + z', t+t'+ 2 \im z \overline z' ]
\end{equation*}
where $\im z \overline z'=\sum_{j=1}^n \im z_j \overline z'_j$. The unit element is $0=[0,0]$. 

There is a natural family of dilations $\delta_\lambda$ on $\hn$ defined by $\delta_\lambda([z,t])= [ \lambda z ,\lambda^2 t]$ for $\lambda\geq 0$.

We define the canonical projection $\pi:\hn\to \Cn$ as 
\begin{equation} \label{e:projection}
\pi([z,t]) = z
\end{equation}
for any $[z,t]\in \hn$. Given $p\in\hn$, we
shall frequently denote by $[z_p,t_p]$ its coordinates in $\Cn\times\R$. 

We also represent
$\hn$ as $\R^{2n+1}\approx \Rn \times \Rn \times \R$ through the
identification $[z,t]\approx (x,y,t)$, where $x=(x_1,\dots,x_n)$, 
$y=(y_1,\dots,y_n) \in \R^n$, $t\in \R$ and $z =
(z_1,\dots,z_n)\in\Cn$ with $z_j = x_j + i y_j$. 

The $(2n+1)$-dimensional Lebesgue measure $\leb$ on $\hn \approx \R^{2n+1}$ is a Haar measure of the group. It is $(2n+2)$-homogeneous with respect to dilations,
\begin{equation*}
 \leb(\delta_\lambda(F)) = \lambda^{2n+2} \leb(F)
\end{equation*}
for all measurable $F\subset \hn$ and $\lambda\geq 0$. 

The horizontal subbundle of the tangent bundle is defined by
\begin{equation*} 
\mathcal{H}^n = \spa \left\{X_j,  Y_j~;~j=1,\dots,n\right\} 
\end{equation*}
where the left invariant vectors fields $X_j$ and $Y_j$ are given by
\begin{equation*}
X_j = \partial_{x_j} + 2 y_j \partial_t \,, \quad Y_j = \partial_{y_j} - 2 x_j \partial_t.
\end{equation*}
Setting $T=\partial_t$ the only non trivial bracket relations are $[X_j,Y_j] =-4T$, hence the Lie algebra of $\hn$ admits the stratification $\mathcal{H}^n \oplus \spa \{T\}$.

\subsection{Carnot-Carath\'eodory distance}
\label{subsec:dc}
We fix a left invariant Riemannian metric $g$ on $\hn$ that makes $(X_1,\dots,X_n,Y_1,\dots,Y_n,T)$ an orthonormal basis. The Carnot-Carath\'eodory distance between any two points $p$ and $q \in \hn$ is then defined by 
\begin{equation*}
 d(p,q) = \inf \{ length_{g} (\gamma); \; \gamma \text{ horizontal curve joining } p \text{ to } q\}
\end{equation*}
where a curve is said to be horizontal if it is absolutely continuous
and such that at a.e.~every point its tangent vector belongs to the
horizontal subbundle $\mathcal H^n$ of the tangent bundle. Recall that by Chow-Rashevsky
theorem any two points can be joined by a horizontal curve of finite
length. Therefore the function $d$ turns out to be a distance. It induces the
original topology of the group, it is left-invariant, i.e., 
\begin{equation*}
 d(p\cdot q,p\cdot q')=d(q,q')
\end{equation*}
for all $p$, $q$, $q' \in \hn$, and one-homogeneous with respect to
dilations, i.e.,
\begin{equation*}
 d(\delta_\lambda(p),\delta_\lambda(q))=\lambda~d(p,q)
\end{equation*}
for all $p$, $q \in \hn$ and $\lambda\geq 0$. 
\smallskip

Equipped with this distance, $\hn$ is a separable and complete metric
space in which closed bounded sets are compact. We will denote by
$B(p,r)$, respectively $\overline B(p,r)$, the open, respectively
closed, ball with center $p\in\hn$ and radius $r>0$. Note that the
diameter of any ball in $(\hn,d)$ is given by twice its radius. 
\smallskip

\begin{lem}\label{lem:distopen}
 Let $p\in\hn$. The distance function $d_p:\hn\setminus\{p\} \rightarrow \R$ from $p$ defined by $d_p(q) = d(p,q)$ is an open map.
\end{lem}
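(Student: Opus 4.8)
The plan is to show directly that $d_p$ is an open map, i.e.\ that for every open set $U\subseteq \hn\setminus\{p\}$ the image $d_p(U)$ is open in $\R$, by checking that it contains an open interval around each of its points. The guiding idea is to avoid any explicit reasoning with Carnot-Carath\'eodory geodesics — whose description is delicate — and to exploit instead the interplay between the dilations $\delta_\lambda$, the left-invariance of $d$, and its one-homogeneity. In a Riemannian or normed setting one would simply slide $q$ along a radial geodesic from $p$ to raise or lower its distance; here the clean substitute is to dilate about $p$.

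First I would introduce, for the fixed point $p$, the dilations centered at $p$, namely
\[
\delta_\lambda^p(q) := p\cdot \delta_\lambda(p^{-1}\cdot q), \qquad \lambda>0.
\]
Using that $0$ is the unit element, the left-invariance $d(p\cdot a,p\cdot b)=d(a,b)$, and the homogeneity $d(\delta_\lambda a,\delta_\lambda b)=\lambda\,d(a,b)$ together with $\delta_\lambda(0)=0$, a short computation gives the scaling identity
\[
d_p\bigl(\delta_\lambda^p(q)\bigr)=d\bigl(p,\delta_\lambda^p(q)\bigr)=\lambda\,d(p,q)=\lambda\,d_p(q)
\]
for every $q\neq p$ and every $\lambda>0$.

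Next, fix an arbitrary point $q'\in U$ and set $s:=d_p(q')$, which is positive since $q'\neq p$. The map $\lambda\mapsto \delta_\lambda^p(q')$ is continuous and takes the value $q'$ at $\lambda=1$; moreover $U$ is open in $\hn$ (because $\hn\setminus\{p\}$ is open, being the complement of a point), so there exists $\eps>0$ such that $\delta_\lambda^p(q')\in U$ whenever $|\lambda-1|<\eps$. For such $\lambda$ the scaling identity yields $d_p(\delta_\lambda^p(q'))=\lambda s$, whence $d_p(U)$ contains the open interval $\bigl((1-\eps)s,(1+\eps)s\bigr)$, which is an open neighborhood of $s=d_p(q')$. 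Since $q'\in U$ was arbitrary, this proves that $d_p(U)$ is open.

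The argument is essentially self-contained once the scaling identity is in hand, and I would single that identity out as the heart of the proof: the observation that conjugating the origin-centered dilations by the left translation by $p$ produces genuine dilations about $p$ under which $d_p$ scales linearly is exactly what replaces the unavailable radial-geodesic argument. Everything else — the continuity of $\lambda\mapsto\delta_\lambda^p(q')$ and the positivity of $s$ — is routine, and I do not anticipate any further obstacle.
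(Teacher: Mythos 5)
Your proof is correct, and its engine is exactly the paper's: the $p$-centered dilations $q\mapsto p\cdot\delta_\lambda(p^{-1}\cdot q)$ together with the scaling identity $d_p(p\cdot\delta_\lambda(p^{-1}\cdot q))=\lambda\,d_p(q)$, which follows from left-invariance and one-homogeneity just as you compute. The difference is in how that identity is deployed. The paper restricts to open balls $B$, invokes the connectedness of balls in a length space to conclude that $d_p(B)$ is a bounded interval, and then uses the dilation trick only negatively, to rule out that the infimum or supremum of $d_p$ over $B$ is attained. You instead use the dilation trick positively on an arbitrary open set $U$: sliding $q'$ along the dilation orbit for $\lambda\in(1-\eps,1+\eps)$ sweeps out the full interval $\bigl((1-\eps)s,(1+\eps)s\bigr)$ inside $d_p(U)$, so the image is a neighborhood of each of its points by definition. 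This is arguably cleaner, since it dispenses entirely with the connectedness-of-balls input and with the reduction to balls; the only point to police is that $\eps$ should be taken smaller than $1$ so that all dilation parameters $\lambda$ in play remain positive (for $\lambda=0$ the orbit returns to $p$, which lies outside $U$), but that is a harmless adjustment. Both arguments share the same essential content; yours packages it more directly.
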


\begin{proof}
We prove that $d_p(B)$ is open for any open ball $B\subset
\hn\setminus \{p\}$. Since balls in $(\hn,d)$ are connected (this is
more generally true in any length space) it follows that $d_p(B)$ is a
bounded interval. Setting $m = \inf(d_p(q)~;~q\in B)$ and $M =
\sup(d_p(q)~;~q\in B)$ it is thus enough to prove that $m \not \in
d_p(B)$ and $M\not \in d_p(B)$. 

If $m=0$ then $m\notin d_p(B)$ because $p\notin B$. If $m>0$
we assume by contradiction that $m \in d_p(B)$. Then we can find
$q\in B$ such that $d_p(q) = m$. The map $\lambda \in [0,+\infty)
\mapsto p \cdot \delta_\lambda(p^{-1} \cdot q)$ being continuous, one
has $p \cdot \delta_\lambda(p^{-1} \cdot q) \in B$ for all $\lambda$
close enough to 1. It follows that 
\begin{equation*}
 d_p(q) \leq d_p(p \cdot \delta_\lambda(p^{-1} \cdot q)) = \lambda \, d_p(q) < d_p(q)
\end{equation*}
provided $\lambda <1$ is close enough to 1, which gives a contradiction. The fact that $M\not \in d_p(B)$ can be proved in a similar way and this concludes the proof.
\end{proof}

\begin{rmk} \label{rmk:distopen}
As an immediate consequence of Lemma~\ref{lem:distopen} we get that 
for any set 
$F\subset \hn$ and any $p\in F$,  
\[
d(p,q)<\diam F \;\; \text{for all $q\in \interior F$. }
\]
Moreover, if $F$ is bounded then $\diam F
= \diam \partial F$.
\end{rmk}

Although the Carnot-Carath\'eodory distance between any two points is
in general hardly explicitly computable, we recall for further
reference the following well-known peculiar cases. One has 
\begin{equation} \label{e:d1}
 d([z,t],[z',t]) = \|z'-z\| 
\end{equation}
for all $z$, $z'\in\Cn$ such that $\im z \overline z' =0$ and all $t\in\R$. Here $\|z\| = (\sum_{j=1}^n |z_j|^2)^{1/2}$ for $z=(z_1,\dots,z_n)\in\Cn$. We have also
\begin{equation} \label{e:d2}
 d([z,t],[z,t']) = (\pi |t'-t|)^{1/2} 
\end{equation}
for all $z\in\Cn$ and $t$, $t'\in\R$.

\subsection{Description of balls and consequences} \label{subsec:balls}

Explicit descriptions of balls in $(\hn,d)$ are well-known,
see~\cite{ambrig},~\cite{gaveau},~\cite{juillet}. One has 
\begin{multline*}
\overline B(0,1)  = \\ \Big\{\Big[\dfrac{\sin\vp}{\vp}\, \chi~,~\dfrac{2\vp - \sin(2\vp)}{2\vp^2}\,\|\chi\|^2\Big]\in\hn~;~\chi\in\Cn~,~\|\chi\|\leq 1~,~\vp\in[-\pi,\pi]\Big\}~.
\end{multline*}
\smallskip

We set 
\begin{equation*}
g(\vp) = 
\begin{cases}
          \dfrac{2\vp - \sin(2\vp)}{2\vp^2} & \text{for } 0<\vp \leq \pi\\
          0 & \text{if } \vp = 0~.
         \end{cases}
\end{equation*}
The function $g$ admits a maximum at $\vp = \pi/2$ with $g(\pi/2) = 2/\pi$. It is increasing from $[0,\pi/2]$ onto $[0,2/\pi]$ and decreasing from $[\pi/2,\pi]$ onto $[1/\pi,2/\pi]$. 
\smallskip

We set
\begin{equation*}
\rho(\vp) = \begin{cases}
           \dfrac{\sin\vp}{\vp} \quad \text{for } 0<\vp \leq \pi \\
          1 \phantom{\dfrac{\sin u}{\vp}} \quad\text{if } \vp = 0~.
         \end{cases}
\end{equation*}
The function $\rho$ is decreasing from $[0,\pi]$ onto $[0,1]$. Let
$\rho^{-1}:[0,1] \rightarrow [0,\pi]$ denote its inverse and set
\begin{equation}
  \label{eq:defacca}
h = g \circ \rho^{-1}.
\end{equation}
\smallskip

We have the following other  description of the closed unit ball 
\begin{equation}\label{profile}
\overline B(0,1) = \{[z,t]\in\hn~;~ \|z\|\leq 1,~|t| \leq h(\|z\|)\}~.
\end{equation}
Using dilations we have 
\begin{equation*}
\overline B(0,\lambda) = \{[z,t]\in\hn~;~ \|z\|\leq \lambda,~|t| \leq h_\lambda(\|z\|)\}
\end{equation*}
for all $\lambda>0$, where 
\begin{equation}
  \label{eq:accaerre}
h_\lambda(\|z\|)=\lambda^2 h(\|z\|/\lambda).  
\end{equation}

\begin{figure}[h!tbp]
  \centering
  \includegraphics[width=5cm]{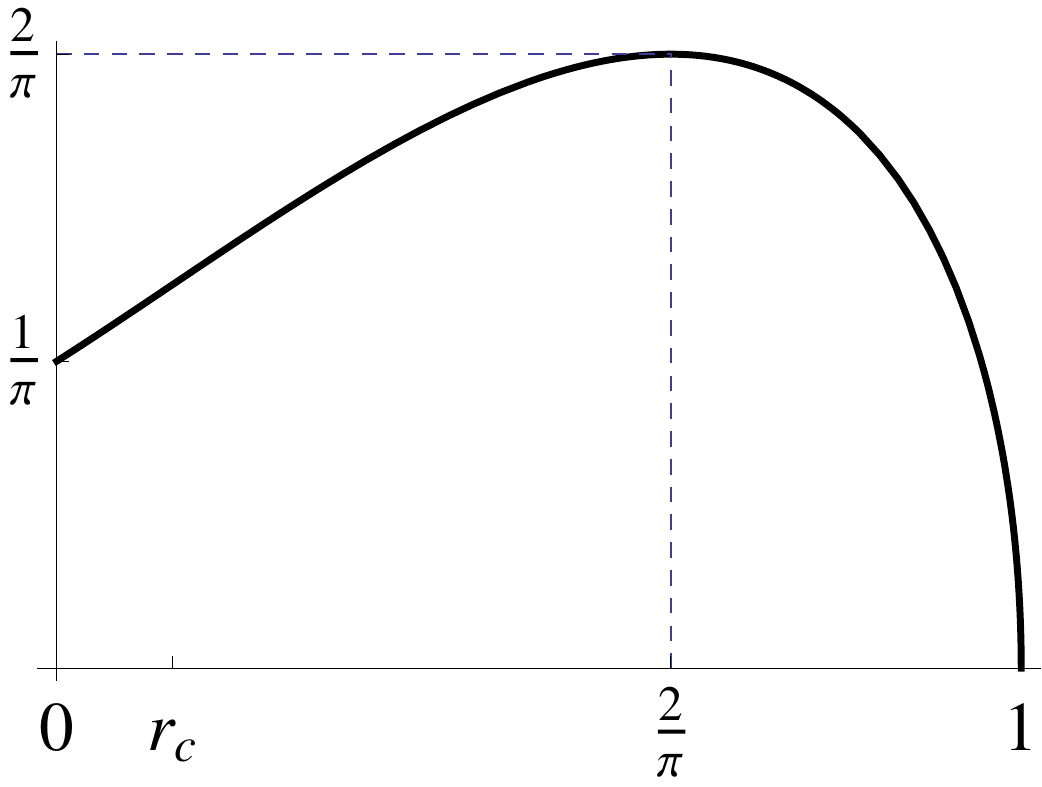}
  \caption{The profile function $h$.}
  \label{grafacca}
\end{figure}
\smallskip

We list some properties of the function $h$ that will
be needed in the sequel. See Figure~\ref{grafacca} for a picture. 
\begin{itemize}
\item[(i)] The map $h$ is increasing from $[0,2/\pi]$ onto
$[1/\pi,2/\pi]$ and decreasing from $[2/\pi,1]$ onto $[0,2/\pi]$.
\smallskip

\item[(ii)] There exists $r_c \in (0,2/\pi)$ such that $h''>0$ on $(0,r_c)$,
  $h''(r_c)=0$, and $h''<0$ on $(r_c,1)$.
\end{itemize}
Indeed we have 
\begin{equation*}
 h'(r) = -2 \, \dfrac{\cos \vp}{\vp}
\end{equation*}
and
\begin{equation*}
h''(r) = 2 \, \dfrac{\vp \sin\vp + \cos\vp}{\vp^2 \rho'(\vp)}
\end{equation*}
for all $r\in [0,1)$ and where $\vp = \rho^{-1}(r) \in
(0,\pi]$. Then statement (i) follows from the expression of $h'$ together with the properties of $\rho^{-1}$. Statement (ii) follows noting
that there exists a unique $\vp_c\in(\pi/2,\pi)$ such that $\vp_c
\sin\vp_c + \cos\vp_c=0$ and that one has $\vp
\sin\vp + \cos\vp>0$ if and only if $0<\vp < \vp_c$. 
\smallskip

We call vertical segment any set of the form $\{[z,t]\in\hn~;~ t^-\leq
t\leq t^+\}$ for some $t^-\leq t^+$. Given $p$, $q\in\hn$ with $z_p =
z_q$, we denote by $L_{p,q}$ the vertical segment joining $p$ to $q$, 
\begin{equation*}
L_{p,q} = \{[z_p,t]\in\hn~;~\min(t_p,t_q)\leq t \leq \max(t_p,t_q)\}~. 
\end{equation*}
In the next proposition we state an elementary geometric property of balls in $\hn$ for further reference. When not specified, by ball we mean a ball that can be indifferently taken as open or closed.

\begin{prop} \label{prop:ballsproperties}
The following statements hold.
\begin{itemize}
\item[(i)] Let $B$ denote a ball in $\hn$. For any $p$, $q\in B$ such that $z_p = z_q$, we have $L_{p,q} \subset B$. 

\item[(ii)] For any $p \in \hn$ and any $p_1$, $p_2\in \hn$ such that $z_{p_1} = z_{p_2}$, we have 
\begin{equation*}
 d(p,q)\leq \max(d(p,p_1),d(p,p_2))
\end{equation*}
for all $q \in L_{p_1,p_2}$.
\end{itemize}
\end{prop}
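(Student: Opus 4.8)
The plan is to reduce both statements to the explicit profile description \eqref{profile} of balls centered at the origin, using the left-invariance of $d$ together with the key observation that left translations map vertical segments onto vertical segments. I would establish (i) first for closed origin-centered balls, then for arbitrary balls, and finally deduce (ii) as an immediate corollary.

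For a closed ball $\overline B(0,\lambda)$ statement (i) is transparent: by \eqref{profile} and \eqref{eq:accaerre} a point $[z,t]$ lies in $\overline B(0,\lambda)$ if and only if $\|z\|\le\lambda$ and $|t|\le h_\lambda(\|z\|)$, so for a fixed $z$ the admissible heights $t$ fill the symmetric interval $[-h_\lambda(\|z\|),h_\lambda(\|z\|)]$. Thus if $p=[z,t_p]$ and $q=[z,t_q]$ both lie in $\overline B(0,\lambda)$, every intermediate height satisfies the same bound and $L_{p,q}\subset\overline B(0,\lambda)$. For a general closed ball $\overline B(p_0,\lambda)$ with $p_0=[z_0,t_0]$ I would use $\overline B(p_0,\lambda)=p_0\cdot\overline B(0,\lambda)$, which follows from left-invariance. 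The crucial computation is that, by the group law, $p_0\cdot[w,s]=[z_0+w,\,t_0+s+2\im z_0\overline w]$, so along a fixed $\Cn$-component $w$ the translate depends on $s$ only through a rigid shift of the height; hence left translation carries vertical segments to vertical segments. Given $p,q\in\overline B(p_0,\lambda)$ with $z_p=z_q$, the points $p_0^{-1}\cdot p$ and $p_0^{-1}\cdot q$ lie in $\overline B(0,\lambda)$ and, since $p_0^{-1}=[-z_0,-t_0]$ and $z_p=z_q$, they share the same $\Cn$-component. The origin-centered case gives that the vertical segment joining them lies in $\overline B(0,\lambda)$, and translating back by $p_0$ this segment is carried exactly onto $L_{p,q}$; therefore $L_{p,q}\subset p_0\cdot\overline B(0,\lambda)=\overline B(p_0,\lambda)$.

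To cover open balls and to obtain (ii), I would use the nesting of balls to pass from the open to the closed case and from (ii) to (i). For an open ball $B(p_0,\lambda)$ and $p,q\in B(p_0,\lambda)$ with $z_p=z_q$, set $\mu=\max(d(p_0,p),d(p_0,q))<\lambda$; then $p,q\in\overline B(p_0,\mu)$, the closed case yields $L_{p,q}\subset\overline B(p_0,\mu)\subset B(p_0,\lambda)$, and (i) is proved in full. Statement (ii) is then immediate: given $p$ and $p_1,p_2$ with $z_{p_1}=z_{p_2}$, put $R=\max(d(p,p_1),d(p,p_2))$, so that $p_1,p_2\in\overline B(p,R)$; applying (i) to $\overline B(p,R)$ gives $L_{p_1,p_2}\subset\overline B(p,R)$, i.e. $d(p,q)\le R$ for every $q\in L_{p_1,p_2}$, which is exactly the asserted inequality.

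I do not expect a genuine obstacle here, since all the geometric content is already packaged in \eqref{profile}. The only points demanding care are purely computational: checking via the group law (and the identity $\im z_0\overline{z_0}=0$) that $p_0^{-1}\cdot p$ and $p_0^{-1}\cdot q$ really have a common $\Cn$-component and that pushing the segment back by $p_0$ reproduces $L_{p,q}$ rather than some tilted set, and keeping the open/closed distinction straight through the nesting argument.
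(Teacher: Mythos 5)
Your proposal is correct and follows essentially the same route as the paper: establish (i) for balls centered at the origin via the profile description \eqref{profile}, transport it to arbitrary balls using left translations (which send vertical segments onto vertical segments), and deduce (ii) by applying (i) to $\overline B(p,\max(d(p,p_1),d(p,p_2)))$. Your nesting argument for open balls is a slightly more careful handling of a point the paper glosses over, but it is not a different method.
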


\begin{proof}
 Property (i) holds for the (closed or open) unit ball by
 \eqref{profile}. Then this property follows for any ball using
 dilations and translations and noting that these maps are bijective
 maps that send vertical segments onto
 vertical segments. 

To prove property (ii) set $r=\max(d(p,p_1),d(p,p_2))$. We have
$p_1$, $p_2 \in \overline B(p,r)$. Hence $L_{p_1,p_2} \subset
\overline B(p,r)$ by (i) and thus (ii) follows.
\end{proof}
\smallskip

In the next lemma we deal with an outer vertical cone property for
balls centered at the origin. Its proof (that we provide for the
reader's convenience) follows from the local Lipschitz continuity of
the profile function $h$ on $[0,1)$. 

\begin{lem}  \label{coneball}
Let $d>0$ and $\delta>0$ be fixed. There exists $\alpha(d,\delta)>0$ such that the following holds. If $p\in \p B(0,d)$ is such that $t_p\geq \delta$, respectively $t_p\leq-\delta$, and $[w,s] \in \hn$ is such that 
\begin{equation*}
 s  > t_p +\alpha(d,\delta) \:\|w-z_p\|~,~ \text{respectively}~ s < t_p - \alpha(d,\delta) \:\|w-z_p\|~,
\end{equation*}
then $[w,s]\notin \overline B(0,d)$.
\end{lem}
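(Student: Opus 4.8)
The plan is to reduce the statement, using the symmetry $[z,t]\mapsto[z,-t]$ of $\overline B(0,d)$, to the upper case, and then to a one-dimensional estimate on the profile function. First I would record that, by \eqref{profile} and \eqref{eq:accaerre}, a boundary point $p$ with $t_p\geq\delta>0$ lies on the upper graph, so that $t_p = h_d(\|z_p\|)\geq\delta$, where I write $h_d$ for $h_\lambda$ with $\lambda=d$. Given $[w,s]$ with $s > t_p + \alpha\|w - z_p\|$, if $\|w\| > d$ then $[w,s]\notin\overline B(0,d)$ trivially, so I may assume $\|w\|\leq d$, in which case $[w,s]\in\overline B(0,d)$ would force $s\leq h_d(\|w\|)$. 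Hence it suffices to choose $\alpha = \alpha(d,\delta)$ so that
\begin{equation*}
 h_d(\|w\|) \leq h_d(\|z_p\|) + \alpha\,\|w - z_p\|
\end{equation*}
for every such $w$ and every admissible $z_p$; indeed then $s > h_d(\|w\|)\geq 0$, so $|s|=s>h_d(\|w\|)$ and $[w,s]$ is outside the ball. Since $|\,\|w\| - \|z_p\|\,| \leq \|w - z_p\|$, this reduces to the one-dimensional claim: there is $\alpha>0$ with $h_d(\sigma)\leq h_d(\tau) + \alpha\,|\sigma - \tau|$ for all $\sigma\in[0,d]$ and all $\tau\in[0,d]$ with $h_d(\tau)\geq\delta$.

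To prove this claim I would set $K = \{\sigma\in[0,d]~;~h_d(\sigma)\geq\delta\}$. If $\delta>2d^2/\pi=\max h_d$ then $K=\emptyset$ and the lemma is vacuous (any $\alpha$ works), so I assume $K\neq\emptyset$. Because $h_d$ is continuous with $h_d(d) = d^2 h(1) = 0 < \delta$, the compact set $K$ is contained in some $[0,b]$ with $b<d$. On $[0,b]$ the function $h_d$ is Lipschitz: indeed $h_d'(\sigma) = d\,h'(\sigma/d)$, and $h'$ is continuous, hence bounded, on the compact interval $[0,b/d]\subset[0,1)$ by the local Lipschitz continuity of $h$ on $[0,1)$. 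Let $L = \max_{[0,b]}|h_d'|$, which is positive since $h_d'(0) = 2d/\pi>0$, and set $\alpha = L$. Now fix $\sigma\in[0,d]$ and $\tau\in K$. If $h_d(\sigma)\leq\delta$, then $h_d(\sigma)\leq\delta\leq h_d(\tau)\leq h_d(\tau) + \alpha|\sigma-\tau|$. If $h_d(\sigma)>\delta$, then $\sigma\in K\subset[0,b]$, so $\sigma$ and $\tau$ both lie in $[0,b]$ and the Lipschitz bound yields $h_d(\sigma) - h_d(\tau)\leq L|\sigma-\tau| = \alpha|\sigma-\tau|$. This proves the one-dimensional claim, hence the lemma in the upper case; the lower case ($t_p\leq-\delta$, $s < t_p - \alpha\|w-z_p\|$) follows by applying the upper case to the reflected ball.

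The only delicate point is that $h_d'$ blows up as $\sigma\to d$ — the ball has a vertical tangent at the equator $\|z\|=d$ — so no \emph{global} Lipschitz constant for $h_d$ is available. This is precisely circumvented by the first case above: where $h_d$ is small, in particular near the equator, the desired inequality holds for free because the hypothesis $h_d(\tau)\geq\delta$ keeps the right-hand side bounded below, thereby confining the quantitative Lipschitz argument to the compact set $K$, which stays away from $\|z\|=d$.
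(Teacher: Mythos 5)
Your proof is correct and follows essentially the same route as the paper's: both reduce to the inequality $h_d(\|w\|)\leq h_d(\|z_p\|)+\alpha\|w-z_p\|$, exploit that the hypothesis $t_p\geq\delta$ confines $\|z_p\|$ to a compact subinterval of $[0,d)$ on which $h_d$ is Lipschitz, and dispose of the remaining region by the trivial bound $h_d<\delta\leq h_d(\|z_p\|)$. The only (immaterial) difference is that the paper splits cases according to whether $\|w\|$ exceeds a threshold radius $\overline r(d,\delta)$, whereas you split according to whether $h_d(\|w\|)$ exceeds $\delta$.
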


\begin{proof}
Let $\alpha = \alpha(d,\delta)>0$ to be chosen later. Let us consider the case where $p\in \p B(0,d)$ and $[w,s] \in \hn$ are such that $t_p\geq \delta$ and $s  > t_p +\alpha\:\|w-z_p\|$, the other case being analogous. If $\|w\|>d$ then we obviously have $[w,s]\notin \overline B(0,d)$. If $\|w\|\leq d$ we want to prove that $s> h_d(\|w\|)$. We have
\[
s > t_p + \alpha\:\|w-z_p\| = h_d(\|z_p\|)+ \alpha\:\|w-z_p\|
\]
and thus it will be sufficient to show that 
\[
h_d(\|z_p\|)+ \alpha\:\|w-z_p\|\geq h_d(\|w\|)~.
\]

Since $\lim_{r\to d^-}h_d(r)=0$, one can find $\overline r=\overline r(d,\delta) \in (0,d)$ such that $h_d(r)< \delta$ for all $r\in (\overline r,d]$. In particular $\|z_p\|\leq \overline r$ because $h_d(\|z_p\|)=t_p\geq\delta$. We choose $\alpha >0$ to be the Lipschitz constant of $h_d$ on $[0,\overline r]$. If $\|w\| \leq \overline r$ it follows
\[
 h_d(\|w\|) \leq h_d(\|z_p\|)+ \alpha\:\big|\|w\|-\|z_p\|\big| \leq h_d(\|z_p\|)+ \alpha\:\|w-z_p\|
\]
as wanted. Whereas 
\[
h_d(\|w\|) <\delta \leq h_d(\|z_p\|)\leq h_d(\|z_p\|)+ \alpha\:\|w-z_p\|
\]
if $\|w\| \in (\overline r,d]$ which concludes the proof.
\end{proof}

\begin{rmk}\label{rem:alpha}
We note that, for any fixed $d>0$, the function $\alpha(d,\delta)$ can
be taken continuous w.r.t. the variable $\delta$. This follows from
the definition of
$\alpha(d,\delta)=\|h_d'\|_{L^\infty([0,\overline r(d,\delta)])}$ together
with the fact that $\delta \mapsto \overline r(d,\delta)$ can be chosen to be
continuous and the fact that the map $r \in (0,d) \mapsto
\|h_d'\|_{L^\infty([0,r])}$ is continuous. 
\end{rmk}
\smallskip

\subsection{Two geometric transformations in $\hn$.} \label{subsec:geometric-transf}
In this subsection we introduce two geometric transformations, namely
the convexification along the vertical $t$-axis and the Steiner
symmetrization with respect to the $\Cn$-plane. They will play a
crucial role in the sequel.
\smallskip

Given $F\subset\hn$ we define its $t$-convex hull $\tco F$ by
\begin{equation*}
\tco F =\{p\in \hn~;~p\in L_{p_1,p_2} \text{ for some } p_1,\, p_2 \in F \text{ with } z_{p_1} = z_{p_2}\}~.
\end{equation*}
We say that $F$ is $t$-convex if $F = \tco F$.

\begin{lem} \label{lem:tco}
 Let $F\subset \hn$. We have $F \subset \tco F$ and $\diam (\tco F) = \diam F$.
\end{lem}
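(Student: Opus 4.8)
The plan is to prove the two assertions separately. The inclusion $F \subset \tco F$ is essentially immediate from the definition: for any $p \in F$ we may take $p_1 = p_2 = p$, which certainly satisfies $z_{p_1} = z_{p_2}$ and $p \in L_{p,p} = \{p\}$, so $p \in \tco F$. This also shows immediately that $\diam F \leq \diam(\tco F)$, so the whole content of the diameter statement lies in proving the reverse inequality $\diam(\tco F) \leq \diam F$.

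To establish $\diam(\tco F) \leq \diam F$, I would take two arbitrary points $p, q \in \tco F$ and bound $d(p,q)$ by $\diam F$. By definition of $\tco F$ there exist $p_1, p_2 \in F$ with $z_{p_1} = z_{p_2}$ and $p \in L_{p_1,p_2}$, and likewise $q_1, q_2 \in F$ with $z_{q_1} = z_{q_2}$ and $q \in L_{q_1,q_2}$. The key tool is Proposition~\ref{prop:ballsproperties}(ii), which says that moving a point along a vertical segment cannot increase its distance to a fixed base point beyond the maximum of the distances to the two endpoints. Applying this first with base point $q$ and the vertical segment $L_{p_1,p_2}$ containing $p$, I get
\begin{equation*}
d(p,q) \leq \max\big(d(q,p_1), d(q,p_2)\big).
\end{equation*}
Then, applying the same proposition again with base point $p_1$ (respectively $p_2$) and the vertical segment $L_{q_1,q_2}$ containing $q$, I can bound each of $d(q,p_1)$ and $d(q,p_2)$ by the maximum of its distances to $q_1$ and $q_2$. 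Chaining these estimates yields
\begin{equation*}
d(p,q) \leq \max\big(d(p_1,q_1), d(p_1,q_2), d(p_2,q_1), d(p_2,q_2)\big) \leq \diam F,
\end{equation*}
since all four points $p_1, p_2, q_1, q_2$ lie in $F$. Taking the supremum over $p, q \in \tco F$ gives $\diam(\tco F) \leq \diam F$, completing the proof.

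The main (and really only) subtlety is making sure the hypotheses of Proposition~\ref{prop:ballsproperties}(ii) are met at each application, namely that the two endpoints of the relevant vertical segment share the same $\Cn$-projection; this is guaranteed by the defining conditions $z_{p_1} = z_{p_2}$ and $z_{q_1} = z_{q_2}$ built into $\tco F$. Everything else is a routine composition of two applications of the same monotonicity estimate, so I do not anticipate any genuine obstacle here.
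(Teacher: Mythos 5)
Your proposal is correct and follows essentially the same route as the paper's own proof: both reduce the diameter bound to two successive applications of Proposition~\ref{prop:ballsproperties}(ii), first sliding $p$ along its vertical segment and then each endpoint's distance along the segment containing $q$. No gaps.
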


\begin{proof}
We obviously have $F \subset \tco F$ and in particular $\diam F \leq
\diam (\tco F)$. Conversely let $p$, $p'\in \tco F$. One can find
$p_1$, $p_2 \in F$ with $z_{p_1} = z_{p_2}$ such that $p\in
L_{p_1,p_2}$. Then it follows from
Proposition~\ref{prop:ballsproperties}(ii) that 
\begin{equation*}
 d(p',p)\leq \max(d(p',p_1),d(p',p_2)).
\end{equation*}
Similarly one can find $p'_1$, $p'_2 \in F$ with $z_{p'_1} = z_{p'_2}$ such that $p'\in L_{p'_1,p'_2}$. Then it follows once again from Proposition~\ref{prop:ballsproperties}(ii) that 
\begin{equation*}
\max(d(p',p_1),d(p',p_2)) \leq \max_{i,j=1,2} d(p_i,p'_j)\leq \diam F
\end{equation*}
which concludes the proof.
\end{proof}
\smallskip

Given $F\subset\hn$ measurable, its Steiner symmetrization $\st F$
with respect to the $\Cn$-plane is defined by
\begin{equation*}
\st F =\{[z,t]\in\hn~;~z\in \pi(F),~2|t|\leq \lone (\{s\in\R~;~[z,s]\in F\})\}
\end{equation*} 
where $\lone$ denotes the one-dimensional Lebesgue measure and
$\pi:\hn\rightarrow\Cn$ is the canonical projection defined in \eqref{e:projection}. We define the reflection map
$\si:\hn\rightarrow\hn$ as 
\begin{equation}
  \label{eq:sigma}
\si([z,t])=[\overline z, t].  
\end{equation}

For the sake of simplicity, the following lemma is stated for compact sets. This will be the only case needed in this paper. It can however be easily generalized to non compact sets.

\begin{lem} \label{lem:diamstE}
 Let $F\subset \hn$ be compact and such that $\si(F) = F$. Then $\diam
 (\st F)\leq \diam F$. 
\end{lem}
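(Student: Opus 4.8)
The plan is to show that for every pair of points $p=[z,t]$ and $q=[z',t']$ in $\st F$ one can find $P,Q\in F$ with $d(P,Q)\ge d(p,q)$; since $p,q$ are arbitrary this gives $\diam(\st F)\le\diam F$. First I would reduce the distance to the origin. By left-invariance, $d([z,s],[z',s'])=d(0,[z'-z,(s'-s)-2\im z\overline{z'}])$, and from the explicit profile \eqref{profile}, together with the invariance of $\overline B(0,\lambda)$ under $\si$ and under $t\mapsto -t$, the quantity $d(0,[w,\tau])$ depends on $(w,\tau)$ only through $\|w\|$ and $|\tau|$; moreover, applying Proposition~\ref{prop:ballsproperties}(ii) with $p=0$, $p_1=[w,\tau]$, $p_2=[w,-\tau]$ shows that it is nondecreasing in $|\tau|$ for fixed $w$. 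Setting $c=2\im z\overline{z'}$, this yields $d(p,q)=d(0,[z'-z,(t'-t)-c])$.

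The role of the hypothesis $\si(F)=F$ is that the slices $F_z=\{s:[z,s]\in F\}$ and $F_{\bar z}$ coincide, so that together with any admissible pair $([z,s],[z',s'])\in F\times F$ the reflected pair $([\bar z,s],[\bar z',s'])$ also lies in $F\times F$. A short computation, using $\im\bar z z'=-\im z\overline{z'}$ and the identity $d(0,[w,\cdot])=d(0,[\bar w,\cdot])$ (again from $\si$-invariance of the balls), shows that reflecting both points flips the sign of the cross term:
\[
d([z,s],[z',s'])=d(0,[z'-z,(s'-s)-c]),\qquad d([\bar z,s],[\bar z',s'])=d(0,[z'-z,(s'-s)+c]).
\]
Hence, for any $s\in F_z$ and $s'\in F_{z'}$, using $\max(|u-c|,|u+c|)=|u|+|c|$ and the monotonicity above,
\[
\diam F\ \ge\ \max\big(d([z,s],[z',s']),\,d([\bar z,s],[\bar z',s'])\big)\ =\ d\big(0,[z'-z,\,|s'-s|+|c|]\big).
\]

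It then remains to pick $s\in F_z$, $s'\in F_{z'}$ with $|s'-s|\ge|t'-t|$. Since $F$ is compact the slices are compact, so $\alpha^+:=\max F_z$, $\alpha^-:=\min F_z$, $\beta^+:=\max F_{z'}$, $\beta^-:=\min F_{z'}$ are attained, and by definition of $\st F$ one has $|t|\le\tfrac12\lone(F_z)\le\tfrac12(\alpha^+-\alpha^-)$ and likewise $|t'|\le\tfrac12(\beta^+-\beta^-)$. An averaging argument gives $\max(\beta^+-\alpha^-,\alpha^+-\beta^-)\ge\tfrac12[(\alpha^+-\alpha^-)+(\beta^+-\beta^-)]\ge|t|+|t'|\ge|t'-t|$, so choosing $s,s'$ among these extreme values yields a pair with $|s'-s|\ge|t'-t|$. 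Combining with the displayed inequality and the monotonicity in $|\tau|$,
\[
\diam F\ \ge\ d(0,[z'-z,|s'-s|+|c|])\ \ge\ d(0,[z'-z,|t'-t|+|c|])\ \ge\ d(0,[z'-z,|(t'-t)-c|])\ =\ d(p,q),
\]
which is the required estimate. The main point — and the reason the hypothesis $\si(F)=F$ is essential — is this sign-flip: $\si$ is \emph{not} an isometry of $(\hn,d)$, so one cannot symmetrize by a rigid motion; instead the reflection symmetry of $F$ is exactly what allows one to bound $|(t'-t)-c|$ by $|t'-t|+|c|$ while realizing the latter by genuine points of $F$. I expect the only delicate steps to be the careful bookkeeping of the cross term $c$ under reflection and the elementary but slightly fiddly extremal-value selection in the last paragraph.
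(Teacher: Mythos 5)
Your proof is correct, and it takes a genuinely different route from the paper's. The paper first reduces to the case where $F$ is $t$-convex (via Lemma~\ref{lem:tco}), so that each symmetrized slice is a single interval; it then left-translates so that the slice over $z_1$ is centred at $t=0$, uses Proposition~\ref{prop:ballsproperties}(ii) to push $p_2$ to the endpoints $[z_2,\pm T_2]$ of a vertical segment containing the slice over $z_2$, and disposes of the awkward endpoint $[z_2,-T_2]$ by means of the isometry $\iota([z,t])=[\overline z,-t]$, which fixes the symmetrized slice over $\{z_1,\overline z_1\}$ and sends $[z_2,-T_2]$ into the one over $\{z_2,\overline z_2\}$. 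You avoid the $t$-convexification entirely: you work directly with the formula $d(p,q)=d(0,[z'-z,(t'-t)-c])$, note that $d(0,[w,\tau])$ depends only on $\|w\|$ and $|\tau|$ and is nondecreasing in $|\tau|$, and exploit the sign flip of the cross term $c=2\im z\overline{z'}$ under $\si$ to bound $|(t'-t)-c|$ by $|t'-t|+|c|$ while realizing the latter with actual points of $F$. This is the same underlying symmetry the paper uses (indeed $\iota$ is $\si$ composed with $t\mapsto -t$), but your bookkeeping is explicit rather than packaged into an isometry. Your extremal-value/averaging selection of $s\in F_z$, $s'\in F_{z'}$ with $|s'-s|\ge|t'-t|$ replaces the paper's normalization by a vertical translation and its case analysis on the sign of $b$; it also removes the need for Lemma~\ref{lem:tco} and for the interval description of $t$-convex sets. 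The trade-off is that your argument is more computational, while the paper's is shorter once the $t$-convex reduction and Proposition~\ref{prop:ballsproperties}(ii) are in place; both are complete.
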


\begin{proof}
Since $F$ is a compact subset of $\hn$, then $\tco F$ is compact and
is obviously $t$-convex. We have $\si(\tco F) = \tco F$ as
soon as $\si(F) = F$. Since $\st F \subset \st (\tco F)$ and $\diam
(\tco F) = \diam F$ by Lemma~\ref{lem:tco}, it is thus sufficient to
consider $t$-convex compact sets $F$ such that $\si(F) = F$. Then we can describe $F$ as  
\begin{equation*}
F =\{[z,t]\in\hn~;~z\in \pi(F),~ a(z) - \mathcal{L}(z) \leq t \leq a(z) + \mathcal{L}(z)\}
\end{equation*} 
for some map $a:\pi(F) \rightarrow \R$ which satisfies $a(\overline z) = a(z)$ for all $z\in \pi(F)$ and where $\mathcal{L}(z) = \lone (\{s\in\R~;~[z,s]\in F\})/2$. Note that we also have $\mathcal{L}(\overline z) = \mathcal{L}(z)$ for any $z\in \pi(F)$.
\smallskip

Let $p_1 = [z_1,t_1]\in \st F$ and $p_2 = [z_2,t_2] \in \st F$. Set 
\begin{gather*}
 F_1 = \{[z,t]\in F~;~z=z_1~\text{or}~z=\overline z_1\}~, \\
F_2 = \{[z,t]\in F~;~z=z_2~\text{or}~z=\overline z_2\}~.
\end{gather*}
We will prove that
\begin{equation}\label{eq:l2.9-1}
 d(p_1,p_2) \leq \max(d(q_1,q_2)~;~q_1\in F_1,~q_2\in F_2)~.
\end{equation}
Since $F_1 \cup F_2 \subset F$ this will imply $d(p_1,p_2) \leq \diam F$ as wanted. 
\smallskip

Set $\hat F_i = [0,-a(z_1)]\cdot F_i$, $i=1,2$, i.e., 
\begin{gather*}
 \hat F_1 = \{z_1,\overline z_1\} \times [-\mathcal{L}(z_1),\mathcal{L}(z_1)]\\
\hat F_2 = \{z_2,\overline z_2\} \times [b-\mathcal{L}(z_2),b+\mathcal{L}(z_2)]
\end{gather*}
where $b= a(z_2) - a(z_1)$. The distance being left invariant, we have 
\begin{equation} \label{e:l2.9-1-1}
 \max(d(q_1,q_2)~;~q_1\in F_1,~q_2\in F_2) = \max(d(q_1,q_2)~;~q_1\in
 \hat F_1,~q_2\in \hat F_2)~. 
\end{equation}
Next set $T_2 = |b| + \mathcal{L}(z_2)$. We have $p_2\in \{z_2\} \times [-T_2,T_2]$ hence it follows from
Proposition~\ref{prop:ballsproperties}(ii) that  
\begin{equation}\label{eq:l2.9-2}
 d(p_1,p_2) \leq \max(d(p_1,[z_2, T_2]), d(p_1,[z_2, -T_2]))~.
\end{equation}
Assume that $b\geq 0$. Then $[z_2,T_2]\in \hat F_2$ and since $p_1 \in \hat F_1$ we get  
\begin{equation}\label{eq:l2.9-3}
 d(p_1,[z_2,T_2]) \leq \max(d(q_1,q_2)~;~q_1\in \hat F_1,~q_2\in \hat F_2)~.
\end{equation}
Let $\iota$ denote the isometry in $(\hn,d)$ defined by 
\begin{equation}
  \label{eq:iota}
\iota([z,t]) = [\overline z, -t].   
\end{equation}
We have $\iota (\hat F_1) = \hat F_1$ and $\iota([z_2,-T_2]) = [\overline z_2,T_2] \in \hat F_2$, hence
\begin{equation}\label{eq:l2.9-4}
 d(p_1,[z_2,-T_2]) = d(\iota(p_1),\iota([z_2,-T_2])) \leq \max(d(q_1,q_2)~;~q_1\in \hat F_1,~q_2\in \hat F_2)~.
\end{equation}
Inequalities \eqref{eq:l2.9-2}, \eqref{eq:l2.9-3} and \eqref{eq:l2.9-4} together with \eqref{e:l2.9-1-1} give
\eqref{eq:l2.9-1}. The case where $b\leq 0$ can be traited in a
similar way and this concludes the proof. 
\end{proof}

\section{Isodiametric problem}
\label{sec:isodiampb}

We recall the definitions of the isodiametric constant 
\begin{equation*}
 C_I = \sup \{ \leb (F)/(\diam F)^{2n+2}~;~0<\diam F <+\infty~\}
\end{equation*}
and of the class of compact isodiametric sets
\begin{equation*}
 \I = \{ E\subset \hn~;~E\text{ compact},~\diam E>0,~\leb (E) = C_I~(\diam E)^{2n+2} \}~.
\end{equation*}

Recall that it is not restrictive to ask isodiametric sets to be compact
as the closure of any set which realizes the supremum in the
right-hand side of the definition of $C_I$ is a compact set that still
realizes the supremum. 
\medskip

We also introduce the class of so-called rotationally invariant sets. Given $\theta = (\theta_1,\dots,\theta_n)\in\R^n$, we define the
rotation $r_\theta:\hn \rightarrow \hn$ around the $\R$-axis by
\[
r_\theta([z,t]) = [(e^{i\theta_1}z_1,\dots,e^{i\theta_n}z_n),t]~.
\]

Any
such $r_\theta$ is an isometry in $(\hn,d)$. We denote by $\rot$ the
class of rotationally invariant sets, 
\begin{equation*}
 \rot = \{ F\subset\hn~;~ r_\theta(F)\subset F \text{ for all } \theta
 \in \R^n \}~. 
\end{equation*}
We set
\begin{equation*}
 C_{I,\rot} = \sup \{ \leb (F)/(\diam F)^{2n+2}~;~F\in \rot,~0<\diam F <+\infty~\}
\end{equation*}
and denote by $\I_\rot$ the family of compact rotationally invariant sets that are isodiametric within the class of rotationally invariant sets,
\begin{equation*}
 \I_\rot = \{ E\in \rot~;~E\text{ compact},~\diam E>0,~\leb (E) = C_{I,\rot}~(\diam E)^{2n+2} \}~.
\end{equation*}

In other words, $\I$, resp. $\I_\rot$, denotes the class of compact sets, resp. compact sets in $\rot$, that maximize the $\leb$-measure among all subsets of $\hn$, resp. among all sets in $\rot$, with the same diameter. 
\smallskip

We first prove the existence of sets in $\I$ and $\I_\rot$.

\begin{thm} \label{thm:existence}
Both families $\I$ and $\I_\rot$ are nonempty. 
\end{thm}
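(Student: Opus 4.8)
The plan is to apply the direct method of the calculus of variations, normalizing the problem by means of the dilations. Since $\leb(\delta_\lambda F)=\lambda^{2n+2}\leb(F)$ and $\diam(\delta_\lambda F)=\lambda\diam F$, the ratio defining $C_I$ is dilation-invariant, and dilations preserve both the class of compact sets and the class $\rot$. Setting $M=\sup\{\leb(F)~;~0<\diam F\le 1\}$, a rescaling argument shows $M=C_I$, and the rotationally invariant analogue equals $C_{I,\rot}$; moreover $C_I,C_{I,\rot}>0$ since any ball has positive ratio. I would then choose a maximizing sequence $(F_k)$ with $\diam F_k\le 1$ and $\leb(F_k)\to C_I$ (respectively with $F_k\in\rot$ and $\leb(F_k)\to C_{I,\rot}$), and replace each $F_k$ by its closure: this leaves the diameter unchanged, does not decrease the measure, and preserves membership in $\rot$, so we may assume every $F_k$ is compact.

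The key step is to confine the whole maximizing sequence to a single fixed compact set. For the unrestricted problem I would left-translate each $F_k$ so that $0\in F_k$; then $\diam F_k\le 1$ forces $F_k\subset\overline B(0,1)$, a fixed compact set (recall that closed bounded sets in $(\hn,d)$ are compact). For the rotationally invariant problem general left translations do not preserve $\rot$, but rotational invariance performs the horizontal confinement for free: for any $[z,t]\in F_k$ the point $[-z,t]=r_{(\pi,\dots,\pi)}([z,t])$ also lies in $F_k$, and since $\im z\,\overline{(-z)}=0$, formula \eqref{e:d1} gives $d([z,t],[-z,t])=2\|z\|\le\diam F_k\le 1$, hence $\|z\|\le 1/2$ throughout $F_k$. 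Translating vertically by a suitable $[0,s]$—which does preserve $\rot$—so that $F_k$ meets $\{t=0\}$, and combining the explicit $t$-bound for $\overline B(0,1)$ with the group law, I would bound the $t$-coordinates of $F_k$ as well, placing every $F_k$ inside a fixed compact box. This confinement is the main point requiring care.

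With all $F_k$ contained in a fixed compact set $K$, I would invoke the Blaschke selection theorem to extract a subsequence converging in the Hausdorff metric to a nonempty compact $E\subset K$. Each $r_\theta$ is a continuous isometry and, since rotationally invariant compact sets in fact satisfy $r_\theta(F)=F$ for all $\theta$, the limit $E$ again satisfies $r_\theta(E)=E$ in the restricted case, so $E\in\rot$. The diameter is continuous under Hausdorff convergence, so $\diam E\le 1$. For the measure I would use upper semicontinuity: for every $\eps>0$ one has $F_k\subset\{q~;~d(q,E)<\eps\}$ for $k$ large, whence $\limsup_k\leb(F_k)\le\leb(\{q~;~d(q,E)<\eps\})$, and letting $\eps\downarrow 0$ yields $\leb(E)\ge\limsup_k\leb(F_k)=C_I$. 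Since $C_I>0$ this gives $\leb(E)>0$ and thus $\diam E>0$, and then $C_I\le\leb(E)/(\diam E)^{2n+2}\le C_I$ (the last inequality by definition of $C_I$ together with $\diam E\le 1$) forces equality, so $E\in\I$; the identical argument carried out within $\rot$ yields $E\in\I_\rot$. I expect the genuinely delicate point to be the confinement step in the rotationally invariant case, and secondarily the verification that the Lebesgue measure can only jump up, and not drop, in the Hausdorff limit.
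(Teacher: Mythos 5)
Your proposal is correct and follows essentially the same route as the paper, which simply invokes compactness of equibounded sequences of nonempty compact sets in the Hausdorff metric together with upper semicontinuity of the Lebesgue measure, noting that $\rot$ is closed under Hausdorff convergence. You merely make explicit the normalization by dilations and the confinement of the maximizing sequence (in particular the rotation-by-$\pi$ trick and vertical translation in the $\rot$ case), which the paper leaves implicit in the word ``equibounded.''
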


\begin{proof}
The proof that $\I$ is non empty relies on the compactness
 of equibounded sequences of non empty compact sets with respect to the
Hausdorff metric (see 2.10.21 in~\cite{federer}) together with the
upper-semicontinuity of the Lebesgue measure (see Theorem 3.2 in
\cite{beer-villar}). 
The fact that $\I_\rot\neq \emptyset$ as well can be 
proved in a similar way noting that $\rot$ is closed with respect
to the convergence of sets in the Hausdorff metric. 
\end{proof}

The rest of this section is devoted to the study of the regularity of sets in $\I$ and $\I_\rot$. The necessary condition \eqref{nc} introduced in Section~\ref{sec:intro} will be one of the key ingredients in this study and we prove in the next proposition that sets in $\I$ and $\I_\rot$ do satisfy this condition. 

\begin{prop} \label{prop:neccdtrefined} 
 Let $E\in \I \cup \I_\rot$. Then $E$ satisfies the necessary
 condition \eqref{nc}. 
\end{prop}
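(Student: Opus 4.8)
The plan is to argue by contradiction: suppose $E \in \I \cup \I_\rot$ but \eqref{nc} fails at some point $p \in \p E$, so that $d(p,q) < \diam E$ for every $q \in \p E$. By Remark~\ref{rmk:distopen} we have $\diam E = \diam \p E$, and since $\p E$ is compact (being a closed subset of the compact set $E$), the supremum $\sup_{q \in \p E} d(p,q)$ is attained; the failure of \eqref{nc} therefore means this maximum is some value $r < \diam E$. The goal is to exploit this slack to enlarge $E$ near $p$ while keeping the diameter unchanged, thereby strictly increasing $\leb(E)$ and contradicting maximality.

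The key steps, in order, would be as follows. First I would fix $p \in \p E$ violating \eqref{nc} and set $r = \max\{d(p,q) : q \in \p E\} < \diam E$. The slack $\diam E - r > 0$ is what I intend to spend. Second, I would look for a small ball or small set around $p$ (or just outside $E$ near $p$) that I can adjoin to $E$ without creating any pair of points at distance exceeding $\diam E$. Concretely, for a point $p' \in \hn$ close to $p$, I want to control $d(p',q)$ for all $q \in E$; since $d(p,q) \le r$ for all $q \in \p E$ and hence (again by Remark~\ref{rmk:distopen}, applied suitably) for all $q \in E$ as well, continuity of the distance and compactness of $E$ should give, for $p'$ in a small enough neighborhood of $p$, that $d(p',q) \le \diam E$ for all $q \in E$. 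Thus $E \cup \{p'\}$ still has diameter $\le \diam E$. Third, to actually gain measure I would adjoin a whole neighborhood of $p$: choosing $p'$ ranging over a small open ball $B(p,\eta)$ with $\eta$ small, I would show $E \cup \overline{B(p,\eta)}$ has diameter $\le \diam E$. Since $p \in \p E$, the set $B(p,\eta) \setminus E$ has positive $\leb$-measure, so the enlarged set has strictly larger measure, contradicting $E \in \I$. For the class $\I_\rot$ the same construction must be made rotationally invariant: I would replace $\overline{B(p,\eta)}$ by its rotational orbit $\bigcup_{\theta} r_\theta(\overline{B(p,\eta)})$. Because each $r_\theta$ is an isometry fixing $E$ (as $E \in \rot$) and fixing the distance to the rotated copies, the diameter bound is preserved, and the added orbit again has positive measure; this handles the $\I_\rot$ case.

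The main obstacle I anticipate is the uniform diameter bound for the enlarged set, i.e.\ verifying that $\diam(E \cup \overline{B(p,\eta)}) \le \diam E$ for some $\eta > 0$, rather than merely $\diam E + o(1)$. The delicate point is twofold: one must bound $d(p',q)$ for $p' \in B(p,\eta)$ and $q \in E$ uniformly, and one must also bound $d(p_1',p_2')$ for two points $p_1', p_2'$ both in $B(p,\eta)$ — but the latter is harmless since $\diam B(p,\eta) = 2\eta \to 0$. For the former, I would use that $q \mapsto d(p,q)$ attains its maximum $r < \diam E$ on the compact set $E$ (that the max over $E$ equals the max over $\p E$ follows from Remark~\ref{rmk:distopen}), pick $\eta < (\diam E - r)/2$, and invoke the triangle inequality $d(p',q) \le d(p',p) + d(p,q) < \eta + r < \diam E$. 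This is where the strict inequality in the failure of \eqref{nc} is essential, and where care is needed to route everything through the compactness and the openness/continuity facts established in Lemma~\ref{lem:distopen} and Remark~\ref{rmk:distopen}. The rotationally invariant variant then requires only checking that the orbit of a small ball still has controlled diameter, which follows from the isometry property of the $r_\theta$ together with the fact that the orbit lies in a fixed compact neighborhood of the orbit of $p$.
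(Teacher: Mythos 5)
Your proposal is correct and follows essentially the same route as the paper: pass from $\max_{q\in\p E}d(p,q)$ to $\max_{q\in E}d(p,q)<\diam E$ via Lemma~\ref{lem:distopen} and Remark~\ref{rmk:distopen}, adjoin a small closed ball (of radius less than half the slack) around $p$, and in the rotationally invariant case adjoin the whole orbit $\bigcup_\theta \overline B(r_\theta(p),\eta)$. The only point you leave slightly implicit is the bound for two points in distinct rotated balls, which the paper handles by writing $d(r_{\theta_1}(p),r_{\theta_2}(p))=d(p,r_{\theta_1-\theta_2}(p))\leq\max_{q\in E}d(p,q)$ using that $r_{\theta_1-\theta_2}(p)\in E$ since $E\in\rot$; this is exactly the isometry argument you gesture at.
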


\begin{proof}
We argue by contradiction. Assume that $E\in\I$ and $p\in\p E$ is such
that $d(p,q)<\diam E$ for all $q\in\p E$. The distance from $p$ being
a continuous and open map and $E$ being
compact, we have $\max_{q\in E} d(p,q) = \max_{q\in\p E} d(p,q)$ (see Lemma~\ref{lem:distopen} and Remark~\ref{rmk:distopen}) and hence  
\begin{equation*}
 \max_{q\in E} d(p,q) < \diam E~.
\end{equation*}
Choosing $r = (\diam E  -  \max_{q\in E} d(p,q))/2 >0$, it follows that 
\begin{equation*}
 \diam (E\cup \overline B(p,r)) = \diam E~.
\end{equation*}
On the other hand, since $E$ is closed and $p\in \p E$, we have
$\interior ( B(p,r) \setminus E) \not=\emptyset$. This implies in
particular that  
\begin{equation*}
 \leb(E\cup \overline B(p,r))>\leb(E)
\end{equation*}
which contradicts the fact that $E\in\I$.
\smallskip

When $E\in\I_\rot$ we modify the argument as follows. We set
\begin{equation*}
 F=E\cup \bigcup_{\theta\in\Rn} \overline B(r_\theta(p),r) 
\end{equation*}
where $r=(\diam E  -  \max_{q\in E} d(p,q))/2 >0$ as before. We have
$F\in \rot$ and 
\begin{equation}\label{diamFE}
 \diam F = \diam E~.
\end{equation}
Indeed, to prove \eqref{diamFE} we fix $q_1,q_2\in
F$. If $q_1,q_2\in E$ then $d(q_1,q_2)\leq\diam E$. If $q_1$, $q_2\in
F\setminus E$ then there exist $\theta_1$, $\theta_2\in \Rn$ such that
$d(q_i,r_{\theta_i}(p))\leq r$, $i=1,2$. Recalling that any rotation
$r_\theta$ is an isometry in $(\hn,d)$ and that $E\in\rot$, it follows that 
\begin{equation*}
\begin{split}
d(q_1,q_2)&\leq d(r_{\theta_1}(p),r_{\theta_2}(p))+2r\\
&=d(p,r_{\theta_1-\theta_2}(p))+2r\\
&\leq \max_{q\in E} d(p,q) + 2r = \diam E~.
\end{split}
\end{equation*}
If $q_1\in E$ and $q_2\in F\setminus E$ with $d(q_2,r_{\theta_2}(p))\leq r$ for some $\theta_2\in\Rn$, we have 
\begin{equation*}
 d(q_1,q_2) \leq d(q_1,r_{\theta_2}(p)) + r \leq d(p,r_{-\theta_2}(q_1)) +r \leq \max_{q\in E} d(p,q) + r \leq \diam E~.
\end{equation*}
On the other hand, similarly as before, we have 
\begin{equation*}
 \leb(F)>\leb(E)
\end{equation*}
and this contradicts the fact that $E\in\I_\rot$.
\end{proof}
\smallskip

Let us introduce some notations. Given a
compact set $E$, we define $f^+,f^-,U$ and $\hat E$ as follows. 
We set
\begin{align*}
f^+(z)&=\max(t\in\R~;~[z,t]\in E)~,\\
f^-(z)&=\min(t\in\R~;~[z,t]\in E)
\end{align*}
for all $z\in\pi(E)$. Clearly, $[z,f^\pm(z)] \in \p E$ for all $z\in \pi(E)$. Recalling the definition of $\tco E$ given in Subsection~\ref{subsec:geometric-transf}, one has 
\begin{equation*}
 \tco E = \{[z,t]\in \hn~;~ z\in \pi(E),~ f^-(z)\leq t\leq f^+(z)\},
\end{equation*}
and $\tco E$ is itself compact. 

We set 
\begin{equation*}
 U = \{ z\in \pi(E)~;~ f^-(z) < f^+(z) \}
\end{equation*}
and
\begin{equation*}
 \hat E = \{[z,t]\in \hn~;~z\in\overline U,~f^-(z)\leq t\leq f^+(z)\}~.
\end{equation*}

\smallskip
Since $\pi(E)$ is closed, we have $\overline U\subset  
\pi(E)$. In particular $f^+$ and $f^-$ are well defined on $\overline
U$. Moreover $\hat E = \tco E \cap \pi^{-1}(\overline U)$ is compact
and contained in $\tco E$. 

\smallskip
We are now ready to state our key regularity result. It concerns $t$-convex and compact sets satisfying \eqref{nc}. 

\begin{thm}  \label{thm:regboundary}
For any $t$-convex and compact set $E$ satisfying \eqref{nc} the
following properties hold.
\begin{itemize}
\item[(i)] The set $U$ is open in $\Cn$ and the maps $f^-$ and $f^+$
  are locally Lipschitz on $U$ and continuous on $\pi(E)$.
\smallskip

 \item[(ii)] $\leb(E)=\leb(\hat E)$.
\smallskip

 \item[(iii)] We have 
\begin{align*}
\interior E = \interior \hat E &= \{[z,t]\in \hn~;~z\in U,~f^-(z)<t< f^+(z)\}~,\\
\p \hat E &= \{[z,f^{\pm}(z)]\in\hn~;~ z\in \overline U\}~,\\
\hat E &= \overline{\interior E}~.
\end{align*}
\end{itemize}
\end{thm}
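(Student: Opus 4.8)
The plan is to extract from \eqref{nc} the geometric fact that at every boundary point $E$ is trapped inside a CC-ball of radius $D:=\diam E$ touching $E$ there, and then to exploit the explicit vertical-cone geometry of such balls (Lemma~\ref{coneball}, \eqref{profile}, Proposition~\ref{prop:ballsproperties}) to sandwich $f^+$ and $f^-$ between Lipschitz graphs. Throughout I use $E=\tco E$, so that $E=\{[z,t]:z\in\pi(E),\,f^-(z)\le t\le f^+(z)\}$, and I write $\ell:=f^+-f^-$ and $\Omega:=\{[z,t]:z\in U,\ f^-(z)<t<f^+(z)\}$. Two elementary facts are recorded first: by compactness of $E$, $f^+$ is upper semicontinuous and $f^-$ is lower semicontinuous on $\pi(E)$.

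The engine is a \emph{no-pinching} statement: if $z_0\in U$ and $f^-(z_0)<t<f^+(z_0)$, then $[z_0,t]\in\interior E$. Indeed, were $[z_0,t]\in\p E$, then by \eqref{nc} there is $q\in\p E$ with $d([z_0,t],q)=D$, whence $E\subset\overline B(q,D)$ and $[z_0,t]\in\p\overline B(q,D)$. Since $[z_0,f^\pm(z_0)]\in E\subset\overline B(q,D)$ and $f^-(z_0)<f^+(z_0)$, the fiber of $\overline B(q,D)$ over $z_0$ is a nondegenerate vertical segment, so $\|z_0-z_q\|<D$ and its relative interior lies in the open ball; hence $[z_0,t]$ would have to be the top or bottom endpoint of that fiber, which forces $[z_0,f^+(z_0)]$ or $[z_0,f^-(z_0)]$ to lie strictly outside $\overline B(q,D)$, a contradiction. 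From this I obtain $\Omega\subset\interior E$, and, by projecting a Euclidean ball around an interior point of such a chord, that $U$ is open in $\Cn$ and that $\ell$ is locally bounded below by a positive constant on $U$; the reverse inclusion $\interior E\subset\Omega$ is immediate since any interior point forces nearby fibers to be nondegenerate, so $\interior E=\Omega$.

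Next I produce uniform Lipschitz supports. Fixing $z_0\in U$ and applying \eqref{nc} to $p^+:=[z_0,f^+(z_0)]$ with diametral partner $q^+$, the no-pinching analysis shows $p^+$ is exactly the top of the fiber of $\overline B(q^+,D)$, so $f^+\le\Phi^+$ on $\pi(E)$ with equality at $z_0$, where $\Phi^+(z)=t_{q^+}+2\im z_{q^+}\overline z+h_D(\|z-z_{q^+}\|)$ is the upper envelope of $\overline B(q^+,D)$; since $2\,h_D(\|z_0-z_{q^+}\|)\ge\ell(z_0)>0$ and $h_D\to0$ at $D^-$, one has $\|z_0-z_{q^+}\|<D$ and $\Phi^+$ is Lipschitz near $z_0$, and symmetrically for a lower support $\Phi^-\le f^-$. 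Using the local lower bound $\ell\ge2\eta>0$ from the previous step together with Lemma~\ref{coneball} and Remark~\ref{rem:alpha} (which give $\|z'-z_{q^+_{z'}}\|\le\overline r(D,\eta)<D$, hence a \emph{uniform} bound on the support slopes over a neighborhood), every nearby $z'$ carries an upper support $\Phi^+_{z'}\ge f^+$ touching at $z'$ with Lipschitz constant $\le L$; comparing $f^+(z'')-f^+(z')\le\Phi^+_{z'}(z'')-\Phi^+_{z'}(z')\le L\|z''-z'\|$ with its symmetric counterpart yields that $f^+$, and likewise $f^-$, is locally Lipschitz on $U$. Continuity on all of $\pi(E)$ follows, since at a degenerate point $z_0$ (where $f^-(z_0)=f^+(z_0)$) the semicontinuity of $f^\pm$ and $f^-\le f^+$ squeeze both limits to the common value. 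This proves (i), and the main obstacle lives precisely here: the envelope bound at one point is immediate, but upgrading it to a \emph{locally uniform} Lipschitz constant needs the local lower bound on $\ell$, which only the no-pinching lemma supplies.

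For (ii), $E=\tco E$ and Fubini give $\leb(E)=\int_{\pi(E)}\ell\,dz=\int_U\ell\,dz=\leb(\hat E)$, since $\ell\equiv0$ on $\pi(E)\setminus U\supset\overline U\setminus U$ and $\hat E=E\cap\pi^{-1}(\overline U)$. Finally (iii) is topological bookkeeping on (i) and the no-pinching step: since $\hat E=E\cap\pi^{-1}(\overline U)\subset E$ is closed with $\Omega\subset\hat E$, we get $\interior\hat E=\Omega=\interior E$; then $\p\hat E=\hat E\setminus\Omega=\{[z,f^\pm(z)]:z\in\overline U\}$, the removed graph points being non-interior because $f^+(z)\not<f^+(z)$; and $\overline{\interior E}=\overline\Omega=\hat E$, where $\subset$ holds as $\hat E$ is closed and $\supset$ follows by approximating any $[z,t]\in\hat E$ through $\Omega$, using continuity of $f^\pm$ to treat boundary points $z\in\overline U\setminus U$.
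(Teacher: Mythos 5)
Your argument is correct, but it reorganizes the proof around a different key lemma than the paper's. The paper's engine is Lemma~\ref{lem:FisinB}/Lemma~\ref{lem:FinE}: a quantitative computation showing that the double cone $F_{p_1,p_2}(r)$ over a vertical chord of $E$ lies in every ball of radius $\diam E$ containing the chord's endpoints, hence (via \eqref{nc}) in $E$ itself; this inner cone simultaneously yields the openness of $U$, the lower one-sided Lipschitz bound \eqref{eq:spoonriver}, and the continuity of $f^{\pm}$ on $\pi(E)$ (Lemma~\ref{lem:continuity}), and it is paired with the outer-cone upper bound of Lemma~\ref{lem:onesideLip} to get local Lipschitz continuity. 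You replace all of this by a qualitative ``no-pinching'' lemma (relative interiors of vertical chords are interior points of $E$, read off directly from \eqref{nc} and the fiber structure \eqref{profile} of the touching ball), which gives openness of $U$ and a locally uniform positive lower bound on $\ell=f^+-f^-$; you then obtain \emph{both} directions of the Lipschitz estimate from upper (resp.\ lower) touching-ball supports applied at the two comparison points, and continuity at degenerate points of $\pi(E)$ from the semicontinuity squeeze $f^-\leq f^+$. This buys you a proof with no analogue of the computation in Lemma~\ref{lem:FisinB}, at the price of leaning more heavily on the explicit description of balls; the paper's inner cone is more quantitative and is what it reuses for continuity. Your Fubini argument for (ii) and the bookkeeping for (iii) are equivalent to the paper's. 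Two loose ends worth tightening: you implicitly identify the metric sphere $\{q'\,;\,d(q,q')=\diam E\}$ with the graph part $\{t=h^{\pm}_{q}(z)\}$ of $\p \overline B(q,\diam E)$, i.e.\ that the open ball is described by strict inequalities in \eqref{profile} -- this is true (it follows from Lemma~\ref{lem:distopen} or from the continuity of $\lambda\mapsto h_\lambda$) but should be said; and the uniform slope bound on the supports is not literally Lemma~\ref{coneball}/Remark~\ref{rem:alpha} but rather the ingredients of their proof, namely $\|z'-z_{q^+_{z'}}\|\leq \overline r(\diam E,\eta)<\diam E$ together with the Lipschitz constant of $h_{\diam E}$ on $[0,\overline r']$ for $\overline r'<\diam E$ and the bound $\|z_{q^+_{z'}}\|\leq C$.
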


Before starting with the proof of Theorem~\ref{thm:regboundary}, we introduce some notations and give a technical lemma.
\smallskip

Given $r>0$ and $p_1$, $p_2\in \hn$ with $\pi(p_1) = \pi(p_2)$,
$p_1=[z_{12},t_1]$ and $p_2=[z_{12},t_2]$ for some $z_{12}\in\Cn$ and
$t_1$, $t_2\in\R$, we set $\delta_{12} = (t_2-t_1)/2$ and 
\begin{equation*}
 F_{p_1,p_2}(r) = \Big\{[w,s]\in\hn~;~\|w-z_{12}\|\leq
 r,~\Big|s-\dfrac{t_1+t_2}{2}\Big|\leq \delta_{12}\:
 \Big(1-\dfrac{\|w-z_{12}\|}{r}\Big)\Big\} 
\end{equation*}
whenever $\delta_{12}>0$. Clearly, $F_{p_1,p_2}(r_1)\subset
F_{p_1,p_2}(r_2)$ provided $r_1\leq r_2$.
\smallskip

\begin{lem} \label{lem:FisinB}
Let $C>0$, $d>0$ and $\delta>0$ be fixed. There exists
$\gamma(C,d,\delta)>0$ such that the following holds. For any
$r\in(0,\gamma(C,d,\delta)]$, $p_1=[z_{12},t_1]$ and
$p_2=[z_{12},t_2]\in \hn$ such that $\|z_{12}\|\leq C$ and
$\delta_{12}=\delta$, we have  
\begin{equation*}
 F_{p_1,p_2}(r) \subset \overline B(p,d)
\end{equation*}
for all $p\in\hn$ such that $p_1,p_2 \in \overline B(p,d)$.
\end{lem}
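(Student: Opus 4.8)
The plan is to use the explicit description of $\overline B(p,d)$ as the region trapped between two graphs over a horizontal disk, and to show that the bicone $F_{p_1,p_2}(r)$ sits between these graphs once $r$ is small. By left-invariance and the description of $\overline B(0,d)$, a point $[w,s]$ lies in $\overline B(p,d)$ if and only if $\|w-z_p\|\le d$ and $g^-(w)\le s\le g^+(w)$, where
\[
g^\pm(w)=t_p+2\im z_p\overline w\pm h_d(\|w-z_p\|).
\]
Indeed $p^{-1}\cdot[w,s]=[w-z_p,\,s-t_p-2\im z_p\overline w]$, which belongs to $\overline B(0,d)$ exactly when these inequalities hold.

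So first I would fix $[w,s]\in F_{p_1,p_2}(r)$ and aim to prove $g^-(w)\le s\le g^+(w)$, which is precisely $[w,s]\in\overline B(p,d)$. Writing $m=(t_1+t_2)/2$, the defining inequalities of $F_{p_1,p_2}(r)$ give $|s-m|\le\delta(1-\|w-z_{12}\|/r)$, while $p_2=[z_{12},t_2]\in\overline B(p,d)$ gives $g^+(z_{12})\ge t_2=m+\delta$ and $p_1\in\overline B(p,d)$ gives $g^-(z_{12})\le t_1=m-\delta$. If $g^\pm$ are Lipschitz with a constant $L$ on a neighbourhood of $z_{12}$ containing the disk $\{\|w-z_{12}\|\le r\}$, then $g^+(w)\ge m+\delta-L\|w-z_{12}\|$ and $g^-(w)\le m-\delta+L\|w-z_{12}\|$; comparing with the bound on $|s-m|$, both desired inequalities follow as soon as $L\le\delta/r$, i.e.\ $r\le\delta/L$. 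Thus everything reduces to producing a Lipschitz constant $L$ for $g^\pm$ near $z_{12}$ depending only on $C$, $d$, $\delta$.

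The key point, and the reason the constraint $\delta_{12}=\delta$ is essential, is that such a Lipschitz bound fails near the ``equator'' $\|w-z_p\|=d$, where $h_d$ has unbounded slope, but the separation hypothesis keeps $z_{12}$ away from it. Since $p_1,p_2\in\overline B(p,d)$ share the horizontal coordinate $z_{12}$ and are vertically $2\delta$ apart, the vertical thickness $2h_d(\|z_{12}-z_p\|)$ of the ball over $z_{12}$ is at least $2\delta$, so $h(\|z_{12}-z_p\|/d)\ge\delta/d^2$. Because $h$ is continuous with $h(1)=0$ and $h>0$ on $[0,1)$, this forces $\|z_{12}-z_p\|\le d\,\xi$ for some $\xi=\xi(d,\delta)<1$. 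Choosing $\xi'\in(\xi,1)$ and requiring $r\le d(\xi'-\xi)$ keeps the whole disk $\{\|w-z_{12}\|\le r\}$ inside $\{\|w-z_p\|\le d\xi'\}$; on that region $w\mapsto h_d(\|w-z_p\|)$ is Lipschitz with constant $d\sup_{[0,\xi']}|h'|<\infty$ (finite since $h\in C^1([0,1))$ and $\xi'<1$), and the shear term $w\mapsto 2\im z_p\overline w$ has gradient of norm $2\|z_p\|\le 2(C+d)$, since $\|z_p\|\le\|z_{12}\|+\|z_{12}-z_p\|\le C+d$. Adding these gives $L=L(C,d,\delta)=2(C+d)+d\sup_{[0,\xi']}|h'|$, and I would set $\gamma(C,d,\delta)=\min\{d(\xi'-\xi),\,\delta/L\}$.

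I expect the main obstacle to be exactly this uniformity: all estimates must be independent of the center $p$ and of the location of $z_{12}$, so the argument must convert the vertical-separation hypothesis into a quantitative ``distance from the equator'' bound and then into a Lipschitz constant that does not degenerate as $p$ varies, the appearance of $\|z_p\|$, controlled only through $C$ and $d$, being the one place where the bound $\|z_{12}\|\le C$ is genuinely used.
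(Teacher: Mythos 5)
Your proof is correct and follows essentially the same route as the paper's: both arguments convert the vertical separation $2\delta$ into a quantitative bound keeping the horizontal coordinate away from the equator of $\overline B(p,d)$ (where $h_d'$ blows up), extract from this a uniform Lipschitz constant of the form $2C+2d+M(d,\delta)$ for the profile-plus-shear graphs bounding the ball, and then choose $r$ so that the cone slope $\delta/r$ dominates that constant. The only difference is cosmetic: the paper first normalizes by the left translation $[-z_{12},-(t_1+t_2)/2]$ and verifies membership by computing $p^{-1}\cdot[w,s]$, whereas you work directly with the graphs $g^\pm$ over the original coordinates.
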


\begin{proof}
Set $p_0=[-z_{12},-(t_1+t_2)/2]$. After a left translation by $p_0$ we need to prove that for all $r>0$ small enough we have 
\begin{equation*}
 p_0\cdot F_{p_1,p_2}(r)\subset \overline B(p,d)
\end{equation*}
for all $p\in\hn$ such that $d(p,p_0 \cdot p_1)\leq d$ and $d(p,p_0
\cdot p_2)\leq d$. For such a $p = [z,t]\in\hn$ we have $(p_0 \cdot
p_i)^{-1}\cdot p=[z,t\pm\delta]\in\overline B(0,d)$ for $i=1,2$, hence
$\|z\|\leq d$ and $|t\pm\delta|\leq h_d(\|z\|)$. It follows that
$h_d(\|z\|)\geq |t|+\delta\geq \delta$. Recalling that $\lim_{u\to
  d^-}h_d(u)=0$ and considering $\overline r=\overline
r(d,\delta)\in(d/2,d)$ such that $h_d(u)< \delta$ whenever
$u\in(\overline r,d]$, we get that $\|z\|\leq \overline r$. 
\smallskip

We have
\begin{equation*}
 p_0\cdot F_{p_1,p_2}(r) = \Big\{[w,s]\in\hn~;~\|w\|\leq r,~|s+2 \im z_{12}\overline w|\leq \delta\Big(1-\dfrac{\|w\|}{r}\Big)\Big\}~.
\end{equation*}
Let $[w,s] \in p_0\cdot F_{p_1,p_2}(r)$ and let us show that $p^{-1}\cdot [w,s] \in \overline B(0,d)$ or equivalently that $\|w-z\|\leq d$ and 
\begin{equation*}
|s-t-2\im  z\overline w|\leq h_d(\|w-z\|)
\end{equation*}
provided $r>0$ is small enough. First note that $\|w-z\|\leq r+\overline r$ is less than $d$ provided $r\leq d-\overline r$. 
\smallskip

Next we have
\begin{equation*}
\begin{split}
s-t-2\im z\overline w \leq\: & \delta\Big(1-\dfrac{\|w\|}{r}\Big)-2 \im z_{12}\overline w -t -2\im z\overline w\\
\leq\: & (\delta-t)-\dfrac{\delta}{r} \:\|w\| +2\|z_{12}\|\:\|w\|+2\|z\|\:\|w\|\\
\leq\: & h_d(\|z\|)-\Big(\dfrac{\delta}{r} -(2C+2d)\Big)\:\|w\|
\end{split}
\end{equation*}
where the last inequality follows from the fact that $[-z,\delta- t]=[z,t-\delta]^{-1}\in\overline B(0,d)$. On the other hand if $r\leq (d-\overline r)/2$, we have $\max(\|w-z\|,\|z\|)\leq (d+\overline r)/2<d$. Let $M = M(d,\delta) >0$ denote the Lipschitz constant of $h_d$ on $[0,(d+\overline r)/2]$. Then we have 
\begin{equation*}
h_d(\|z\|) \leq h_d(\|w-z\|) + M\|w\|~.
\end{equation*}
It follows that 
\begin{equation*}
 s-t-2\im z\overline w \leq h_d(\|w- z\|) - \Big(\dfrac{\delta}{r} - (2C+2d+M)\Big)\:\|w\| \leq h_d(\|w- z\|)
\end{equation*}
provided $r\leq \min((d-\overline r)/2,\delta/(2C+2d+M))$.
\smallskip

Similarly we have
\begin{equation*}
\begin{split}
s-t-2\im z\overline w \geq\: & -\delta\Big(1-\dfrac{\|w\|}{r}\Big) -2 \im z_{12}\overline w-t -2\im z\overline w\\
\geq\: & -h_d(\|z\|)+\Big(\dfrac{\delta}{r}-(2C+2d)\Big)\|w\|\\
\geq\: & -h_d(\|w-z\|)+\Big(\dfrac{\delta}{r}-(2C+2d+M)\Big)\|w\|\\
\geq\: & -h_d(\|w-z\|)
\end{split}
\end{equation*}
provided $r\leq \min((d-\overline r)/2,\delta/(2C+2d+M))$ and where the second inequality follows from the fact that $[-z,-t-\delta]=[z,t+\delta]^{-1}\in\overline B(0,d)$. Hence the lemma follows with 
\begin{equation} \label{eq:gamma2}
\gamma(C,d,\delta)=\min\Big(\dfrac{d-\overline r(d,\delta)}{2}, \dfrac{\delta}{2C+2d+M(d,\delta)}\Big)~.
\end{equation}
\end{proof}

\begin{rmk}\label{rmk:monotonicitygamma3}
Note that, $C$ and $d$ being fixed, the function $\gamma(C,d,\delta)$
can be taken to be continuous w.r.t. the variable $\delta$. This is a
consequence of \eqref{eq:gamma2} and the fact that $\overline
r(d,\delta)$ can be chosen continuous w.r.t. $\delta$.
\end{rmk}

We turn now to the proof of Theorem \ref{thm:regboundary}.

\begin{proof}[Proof of Theorem \ref{thm:regboundary}]
  
We fix
a $t$-convex and compact set $E$ satisfying
\eqref{nc} and set  
\begin{equation*}
 C=\max_{p\in E} \|\pi(p)\| \quad\text{and}\quad d=\diam E~.
\end{equation*}
We begin with two lemmata. The first one is a consequence
of Lemma~\ref{lem:FisinB}.

\begin{lem}\label{lem:FinE} 
Let  $E$ be as above, $\delta>0$ be fixed and let $\gamma =
\gamma(C,d,\delta)>0$ be as in Lemma \ref{lem:FisinB}. Then, for any $r\in
(0,\gamma]$, one has 
\begin{equation*}
 F_{p_1,p_2}(r) \subset E
\end{equation*} 
for all $p_1$, $p_2\in E$ such that $\pi(p_1) = \pi(p_2)$ and $\delta_{12}=\delta$.
\end{lem}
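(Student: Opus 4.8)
The plan is to prove the inclusion by a \emph{maximal subcone} argument, using Lemma~\ref{lem:FisinB} to trap the bicone inside balls and then invoking the necessary condition \eqref{nc} together with the explicit ball profile \eqref{profile}--\eqref{eq:accaerre} to force actual membership. First I would normalize: after a left translation by $[0,-(t_1+t_2)/2]$ (a purely vertical shift, hence an isometry that preserves $t$-convexity, \eqref{nc}, the diameter, and the projection, so $C$ and $d$ are unchanged) we may assume $p_1=[z_{12},-\delta]$ and $p_2=[z_{12},\delta]$, with $\|z_{12}\|\le C$. By $t$-convexity $L_{p_1,p_2}\subset E$, and $F_{p_1,p_2}(r)$ is the bicone around this segment whose vertical fibre over $w$ is the symmetric interval of half-length $\delta\big(1-\|w-z_{12}\|/r\big)$. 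Set
\[
\rho^\ast=\sup\{\rho\in[0,r]:F_{p_1,p_2}(\rho)\subset E\}.
\]
Since $E$ is closed and the bicones increase to $F_{p_1,p_2}(\rho^\ast)$, one has $F_{p_1,p_2}(\rho^\ast)\subset E$, and I argue by contradiction assuming $\rho^\ast<r\le\gamma(C,d,\delta)$.

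Next I would produce an exit point. Picking $\rho_j\downarrow\rho^\ast$ and $x_j=[w_j,s_j]\in F_{p_1,p_2}(\rho_j)\setminus E$, compactness gives $x_j\to x^\ast\in\partial E\cap\partial F_{p_1,p_2}(\rho^\ast)$, lying (say) on the upper lateral surface of the bicone. In the generic case $\|w_j-z_{12}\|\le\rho^\ast$ the point $[w_j,\delta(1-\|w_j-z_{12}\|/\rho^\ast)]$ lies in $E$, so $f^+(w_j)$ is defined and $f^+(w_j)\ge\delta(1-\|w_j-z_{12}\|/\rho^\ast)\ge 0$; since $x_j\notin E$ sits above this surface, $s_j>f^+(w_j)$, i.e.\ $x_j$ lies strictly above the boundary point $P_j=[w_j,f^+(w_j)]\in\partial E$, and a squeeze gives $P_j\to x^\ast$. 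Applying \eqref{nc} at $P_j$ yields $Q_j\in\partial E$ with $d(P_j,Q_j)=d$. Because $E\subset\overline B(Q_j,d)$ (as $Q_j\in E$ and $\diam E=d$) and $\rho_j\le\gamma$, Lemma~\ref{lem:FisinB} with centre $Q_j$ gives $F_{p_1,p_2}(\rho_j)\subset\overline B(Q_j,d)$; in particular $x_j\in\overline B(Q_j,d)$ and $P_j\in\partial B(Q_j,d)$.

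The contradiction then comes from the profile $|t|\le h_d(\|z\|)$. After translating $Q_j$ to the origin, $p_1,p_2$ retain a common projection and a vertical gap exactly $2\delta$, so the fibre bound forces $h_d(\|z_{12}-z_{Q_j}\|)\ge\delta$, whence $\|z_{12}-z_{Q_j}\|\le\overline r(d,\delta)<d$ and $P_j$ projects into the region where $h_d>0$; thus $P_j$ lies on the sphere at vertical height $\pm h_d$, i.e.\ in the upper or lower cap. The lower cap is impossible: there $P_j$ would sit at the bottom of its fibre, while $p_1$, lying $\delta$ below at essentially the same projection (the Heisenberg twist $2\,\im z_{Q_j}\overline{(w_j-z_{12})}=O(C\gamma)$ and the Lipschitz oscillation $O(M\gamma)$ of $h_d$ being absorbed for $\gamma$ small), would fall below \emph{its} fibre, contradicting $p_1\in\overline B(Q_j,d)$. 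Hence $P_j$ is in the upper cap, its height equals the fibre top $h_d$, and $x_j$---directly above $P_j$ with the same projection and strictly larger $t$---lies above that top, so $x_j\notin\overline B(Q_j,d)$, contradicting the previous step. The lower-surface exit is symmetric, exchanging the roles of $p_1$ and $p_2$. This forces $\rho^\ast=r$, i.e.\ $F_{p_1,p_2}(r)\subset E$.

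The hard part will be the degenerate geometry, and it is precisely what dictates the threshold $\gamma(C,d,\delta)$. The cap-selection step works only once the twist $2\,\im z_{Q_j}\overline{(w_j-z_{12})}$ and the oscillation of $h_d$ are dominated by the fixed gap $\delta$---exactly the balance already built into $\gamma$ in \eqref{eq:gamma2}---so I would need to reproduce that bookkeeping carefully. The truly delicate configuration is when the exit occurs at a $w_j\notin\pi(E)$ or at the equator of the cone, $\|w^\ast-z_{12}\|=\rho^\ast$: there $x_j$ is no longer directly above a boundary point of $E$, the crude fibre comparison is unavailable, and one must instead apply \eqref{nc} at $x^\ast$ itself and invoke the outer vertical cone estimate of Lemma~\ref{coneball}, with admissible slope $\alpha(d,\delta)$ again controlled by the smallness of $\gamma$. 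I expect this equatorial/projection-boundary case to be the main obstacle to a clean write-up.
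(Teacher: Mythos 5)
Your overall strategy (argue by contradiction, produce a point of $\p E$, apply \eqref{nc} to get a farthest point $q$, and play that off against Lemma~\ref{lem:FisinB}) is the right one, but the execution has a genuine hole and misses the one observation that makes the proof short. The hole is the case you yourself flag at the end: when the exit points $x_j=[w_j,s_j]$ have $\|w_j-z_{12}\|>\rho^\ast$, the fibre of $F_{p_1,p_2}(\rho^\ast)$ over $w_j$ is empty or degenerate, you cannot guarantee $w_j\in\pi(E)$, and the whole comparison ``$x_j$ sits strictly above $P_j=[w_j,f^+(w_j)]\in\p E$'' breaks down. Since this equatorial/projection-boundary configuration cannot be ruled out a priori, the proof as written is incomplete. (There are also smaller loose ends: $\rho^\ast$ may equal $0$, in which case $F_{p_1,p_2}(\rho^\ast)$ is the degenerate segment and the ``lateral surface'' language does not apply; and the cap-selection bookkeeping essentially re-derives estimates already packaged inside Lemma~\ref{lem:FisinB}.)

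The missing idea is topological, not metric. The set $\interior F_{p_1,p_2}(r)$ is connected (it is a Euclidean open bicone); it meets $E$, because by $t$-convexity it contains $L_{p_1,p_2}\setminus\{p_1,p_2\}$; if it also met $\hn\setminus E$ (an open set, $E$ being closed), connectedness would force it to meet $\p E$. So one gets directly a point $p\in\p E\cap\interior F_{p_1,p_2}(r)$ --- no maximal subcone, no exit sequence, and no case distinction on where $p$ projects. Then \eqref{nc} gives $q\in\p E$ with $d(p,q)=d$, while $p_1,p_2\in E\subset\overline B(q,d)$ and Lemma~\ref{lem:FisinB} give $F_{p_1,p_2}(r)\subset\overline B(q,d)$, hence $p\in\interior F_{p_1,p_2}(r)\subset B(q,d)$ and $d(p,q)<d$, a contradiction. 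This is the paper's proof, and note that the crucial point is that $p$ lies in the \emph{interior} of the bicone, so the strict inequality comes for free from Lemma~\ref{lem:FisinB}; your entire upper-cap/lower-cap analysis, the Heisenberg twist estimates, and Lemma~\ref{coneball} are not needed. Incidentally, your unresolved equatorial case is also settled by this observation: the limit point $x^\ast$ you construct there lies in $\p E\cap\interior F_{p_1,p_2}(r)$, so the same one-line contradiction applies to it.
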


\begin{proof}
Since $E$ is closed, it is enough to show that 
\begin{equation*}
 \interior F_{p_1,p_2}(r) \subset E.
\end{equation*}
Assume by contradiction that $(\interior F_{p_1,p_2}(r))
\setminus E$ is nonempty. By $t$-convexity of $E$ we know that also $E \cap \interior
F_{p_1,p_2}(r)$ is nonempty since it contains 
$L_{p_1,p_2}\setminus \{p_1,p_2\}$. Therefore there exists $p\in \p E \cap \interior
F_{p_1,p_2}(r)$. Then by \eqref{nc} there exists $q\in \p E$ such that
$d(p,q) = d$. Thanks to Lemma \ref{lem:FisinB}, we have 
\[
p\in \interior F_{p_1,p_2}(r) \subset B(q,d),
\]
thus $d(p,q) < d$ which is a contradiction.
\end{proof}

\begin{lem}\label{lem:onesideLip}
Let $E$ be as before and let $\delta>0$ be fixed. There exists
$\beta(C,d,\delta)>0$ such that the following holds. If $z\in U$ is such that $f^+(z)-f^-(z)= 2\delta$, then 
\begin{equation*}
f^+(w) \leq f^+(z) + \beta(C,d,\delta)\:\|w-z\|\quad\text{and}\quad
f^-(w) \geq f^-(z) - \beta(C,d,\delta)\:\|w-z\| 
\end{equation*}
for all $w\in\pi(E)$.
\end{lem}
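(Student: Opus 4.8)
The plan is to exploit Lemma~\ref{lem:FinE} to manufacture, around a point $[z,f^+(z)]$ on the upper boundary, a solid ``spike'' $F_{p_1,p_2}(r)$ that is forced to lie inside $E$, and then to read off a one-sided Lipschitz bound from the geometry of this spike. Concretely, I would fix $z\in U$ with $f^+(z)-f^-(z)=2\delta$ and set $p_1=[z,f^-(z)]$, $p_2=[z,f^+(z)]$, so that $\delta_{12}=\delta$. By Lemma~\ref{lem:FinE}, taking $r=\gamma(C,d,\delta)$, we have $F_{p_1,p_2}(\gamma)\subset E$. The set $F_{p_1,p_2}(\gamma)$ is a bi-cone whose vertical extent at a horizontal point $w$ with $\|w-z\|\le\gamma$ reaches up to height $\frac{f^-(z)+f^+(z)}{2}+\delta\bigl(1-\frac{\|w-z\|}{\gamma}\bigr)=f^+(z)-\frac{\delta}{\gamma}\|w-z\|$. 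Since this point belongs to $E$, for every $w\in\pi(E)$ with $\|w-z\|\le\gamma$ we deduce $f^+(w)\ge f^+(z)-\frac{\delta}{\gamma}\|w-z\|$, and symmetrically $f^-(w)\le f^-(z)+\frac{\delta}{\gamma}\|w-z\|$. These are \emph{lower} bounds on the cone living inside $E$, which is the wrong direction; what the statement asks for are \emph{upper} bounds on $f^+$ and lower bounds on $f^-$.

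The correct mechanism therefore runs the spike argument at the \emph{nearby} point rather than at $z$, or equivalently uses the spike to block $[w,f^+(w)]$ from lying too high. Here is the key idea I would pursue: if $f^+(w)$ were very large compared with $f^+(z)$, then the point $[w,f^+(w)]\in\p E$ would, together with the spike $F_{p_1,p_2}(r)\subset E$ sitting over $z$, create two points of $E$ at horizontal distance $\|w-z\|$ but with a large vertical separation, and I would use Lemma~\ref{lem:FisinB} (the containment $F_{p_1,p_2}(r)\subset\overline B(p,d)$) to derive a contradiction with $\diam E=d$. More precisely, I would take the vertical segment of $E$ over $w$ and the vertical segment over $z$ (of half-length $\delta$), both of which lie in $E$, and test the diameter constraint: the farthest pair among these vertical segments must have distance $\le d$. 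Using Proposition~\ref{prop:ballsproperties}(ii) to reduce distances between vertical segments to distances between their endpoints, and the outer-cone estimate of Lemma~\ref{coneball} (which gives a slope $\alpha(d,\delta)$ controlling how fast $\p\overline B(0,d)$ can rise above height $\delta$), I expect to obtain exactly a bound of the form $|f^+(w)-f^+(z)|\le\beta\|w-z\|$ with $\beta$ depending only on $C$, $d$, $\delta$.

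Accordingly the steps, in order, are: (1) place $p_1,p_2$ over $z$ with $\delta_{12}=\delta$ and invoke Lemma~\ref{lem:FinE} to embed the spike $F_{p_1,p_2}(\gamma)\subset E$; (2) suppose for contradiction that $f^+(w)>f^+(z)+\beta\|w-z\|$ for a suitable $\beta$; (3) exhibit a point $p\in\hn$ that is $d$-close to the whole spike over $z$ (built from the \eqref{nc} condition applied at $[w,f^+(w)]$, giving a companion $q\in\p E$ with $d([w,f^+(w)],q)=d$) and show, via Lemma~\ref{coneball} applied around such a realizing pair, that the required slope is violated; (4) conclude $f^+(w)\le f^+(z)+\beta\|w-z\|$, and treat $f^-$ symmetrically using the reflection $\iota$ of \eqref{eq:iota} or the analogous lower half of the spike. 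The constant $\beta(C,d,\delta)$ would be assembled from $\gamma(C,d,\delta)^{-1}$, $\alpha(d,\delta)$, and the diameter bound $C$ on $\|\pi(p)\|$.

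The main obstacle I anticipate is step (3): correctly choosing the auxiliary point $p$ and the realizing pair so that the diameter constraint $d$ is genuinely tight in the vertical direction, since away from the equator the balls $\overline B(0,d)$ are very flat and a naive comparison of endpoint distances loses the needed slope. The hypothesis $f^+(z)-f^-(z)=2\delta$ is essential precisely because it guarantees we stay in the region $t\ge\delta$ where Lemma~\ref{coneball} furnishes a finite outer-cone slope $\alpha(d,\delta)$; reconciling the two estimates --- the inner spike of slope $\delta/\gamma$ forced inside $E$, and the outer cone of slope $\alpha(d,\delta)$ forbidding points outside $\overline B(0,d)$ --- to produce a single clean Lipschitz constant valid uniformly for $w\in\pi(E)$ is where the bookkeeping will be delicate.
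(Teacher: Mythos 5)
Your toolbox is the right one---\eqref{nc}, the outer cone of Lemma~\ref{coneball}, the role of the hypothesis $f^+(z)-f^-(z)=2\delta$, and a correction term of order $C\|w-z\|$ for the twist in the group law---and you correctly recognize that the spike $F_{p_1,p_2}(r)\subset E$ only yields the wrong-direction bounds. But the pivotal step (your step (3)) is both left unresolved and, as specifically sketched, would fail: you propose to apply \eqref{nc} at $[w,f^+(w)]$, obtaining $q\in\p E$ with $d([w,f^+(w)],q)=d$. Then the point of $\p B(0,d)$ you can work with is $q^{-1}\cdot[w,f^+(w)]$, and Lemma~\ref{coneball} requires that point to have height at least $\delta$ in absolute value; nothing is known about $f^+(w)-f^-(w)$, so there is no way to verify this. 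Worse, the point you would need to exclude from $\overline B(0,d)$, namely $q^{-1}\cdot[z,f^+(z)]$, lies \emph{below} the boundary point by roughly $(\beta-2C)\|w-z\|$, and going downward from a point of the upper boundary of a CC-ball does not exit the ball, so the outer cone gives no contradiction. Similarly, testing the four endpoint distances between the vertical segments over $z$ and over $w$ via Proposition~\ref{prop:ballsproperties}(ii) cannot work by itself: without a pair realizing the diameter exactly, all those constraints may be slack.

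The paper's proof applies \eqref{nc} at $[z,f^+(z)]$ instead, which is precisely the point over which the vertical extent $2\delta$ is controlled. With $q\in\p E$ satisfying $d([z,f^+(z)],q)=d$, set $p=q^{-1}\cdot[z,f^+(z)]\in\p B(0,d)$ and $p'=q^{-1}\cdot[z,f^-(z)]\in\overline B(0,d)$; these have the same horizontal coordinate, so $t_p=h_d(\|z_p\|)\geq(t_p-t_{p'})/2=\delta$. This is exactly the height hypothesis Lemma~\ref{coneball} needs, and the point to be excluded, $[w',s']=q^{-1}\cdot[w,f^+(w)]$, now lies \emph{above} the cone at $p$: the contradiction hypothesis $f^+(w)>f^+(z)+(\alpha(d,\delta)+2C)\|w-z\|$ gives $s'>t_p+\alpha(d,\delta)\|w'-z_p\|$ after absorbing the twist term $-2\im z_q\overline{(w-z)}$ (bounded by $2C\|w-z\|$), whence $d(q,[w,f^+(w)])>d$, contradicting $\diam E=d$. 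Note that $\beta=\alpha(d,\delta)+2C$; neither Lemma~\ref{lem:FinE} nor $\gamma(C,d,\delta)^{-1}$ enters this lemma at all (the spike is used elsewhere, for the opposite inequality and the openness of $U$). So the missing idea is the choice of base point for \eqref{nc}: it must be the point whose vertical fiber you control, so that the companion point $p'$ in the same fiber forces $p$ high enough on the sphere for the cone estimate to apply in the correct direction.
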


\begin{proof}
We claim that the lemma holds for $\beta(C,d,\delta)=\alpha(d,\delta)+2C$ where $\alpha(d,\delta)$ is given by Lemma~\ref{coneball}. Assume by contradiction that $w\in\pi(E)$ is such that
\begin{equation*}
f^+(w) > f^+(z) + (\alpha(d,\delta)+2C)\:\|w-z\|~,
\end{equation*}
the case where $f^-(w) < f^-(z) - (\alpha(d,\delta)+2C)\:\|w-z\|$ being analogous. Since $[z,f^+(z)]\in\p E$, by \eqref{nc} there exists $q\in\p E$ such that $d([z,f^+(z)],q)=d$. Set $p=q^{-1}\cdot [z,f^+(z)]$ and $[w',s']=q^{-1}\cdot[w,f^+(w)]$. We have 
\begin{equation*}
p=[z-z_q,f^+(z)-t_q-2\im z_q\overline z]
\end{equation*}
and
\begin{equation*}
[w',s']=[w-z_q,f^+(w)-t_q-2\im z_q\overline{w}]~.
\end{equation*}
It follows that 
\begin{equation} \label{outofcone}
\begin{split}
 s' =\:& t_p+f^+(w)- f^+(z)-2\im z_q (\overline{w-z}) \\
 >\: & t_p+(\alpha(d,\delta)+2C)\|w-z\| -2 C \:\|w-z\|\\
 =\: & t_p+\alpha(d,\delta)\|w'-z_p\|
\end{split}
\end{equation}
where the inequality follows by the choice of $w\in\pi(E)$.
\smallskip

On the other hand set $p'=q^{-1}\cdot[z,f^-(z)]$. We have $z_{p'} =
z_p$. We also have $[z,f^-(z)]\in E$ and $q\in E$, hence
$d(q,[z,f^-(z)])\leq d$ or equivalently $p'\in \overline
B(0,d)$. Therefore
\begin{equation*}
 |t_{p'}|\leq h_d(\|z_p\|)~.
\end{equation*}
Recalling that $p\in \p B(0,d)$ we get
\begin{equation}\label{tpdelta}
t_p=h_d(\|z_p\|)
\geq \dfrac {t_p-t_{p'}}{2}  = \dfrac {f^+(z)-f^-(z)}{2} = \delta~.
\end{equation}
Then, taking Lemma~\ref{coneball} into account, we infer from
\eqref{outofcone} and \eqref{tpdelta} that $[w',s']\not \in \overline
B(0,d)$, i.e., $d(q,[w,f^+(w)])>d$. This implies that $\diam E> d$, a
contradiction. 
\end{proof}

\begin{rmk}\label{rmk:monotonicitybeta}
Taking into account Remark
\ref{rem:alpha}, one can
take the function $\beta(C,d,\delta)$ to be continuous w.r.t. the
variable $\delta$.
\end{rmk}

We prove now the continuity of $f^-$ and $f^+$ on $\pi(E)$.

\begin{lem} \label{lem:continuity}
The functions $f^-$ and $f^+$ are continuous on $\pi(E)$.
\end{lem}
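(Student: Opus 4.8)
The plan is to obtain continuity of $f^\pm$ on $\pi(E)$ by combining two soft semicontinuity facts, valid at every point of $\pi(E)$, with a quantitative lower bound on the vertical thickness $f^+-f^-$ coming from the inner bicones of Lemma~\ref{lem:FinE}. First I would record what follows from compactness alone. For $z\in\pi(E)$ the fiber $\{t\in\R:[z,t]\in E\}$ is nonempty and compact, so $f^\pm$ are well defined and $[z,f^\pm(z)]\in E$. If $z_k\to z$ in $\pi(E)$ and $f^+(z_k)\to L$ along a subsequence, then $[z_k,f^+(z_k)]\to[z,L]$, which lies in $E$ since $E$ is closed; hence $L\le f^+(z)$, so $f^+$ is upper semicontinuous on $\pi(E)$. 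The symmetric argument shows that $f^-$ is lower semicontinuous on $\pi(E)$.

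Next I would treat the points $z_0\in\pi(E)\setminus U$, where $f^+(z_0)=f^-(z_0)=:\tau$, using only these two facts together with the pointwise inequality $f^-\le f^+$. For any $z_k\to z_0$ in $\pi(E)$ one gets $\tau\le\liminf f^-(z_k)\le\limsup f^-(z_k)\le\limsup f^+(z_k)\le\tau$, and likewise $\liminf f^+(z_k)\ge\liminf f^-(z_k)\ge\tau$, so both $f^+(z_k)\to\tau$ and $f^-(z_k)\to\tau$; continuity at $z_0$ follows.

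The remaining case is $z\in U$, i.e.\ $\delta_0:=(f^+(z)-f^-(z))/2>0$, and this is where the geometry enters. Applying Lemma~\ref{lem:FinE} with $p_1=[z,f^-(z)]$, $p_2=[z,f^+(z)]$ (so $\delta_{12}=\delta_0$) and a fixed $r\in(0,\gamma(C,d,\delta_0)]$ gives $F_{p_1,p_2}(r)\subset E$. Writing $c=(f^+(z)+f^-(z))/2$, for every $w$ with $\|w-z\|\le r$ the points $[w,\,c\pm\delta_0(1-\|w-z\|/r)]$ belong to $F_{p_1,p_2}(r)\subset E$, whence $w\in\pi(E)$ and
\begin{equation*}
 f^+(w)\ge c+\delta_0\Big(1-\tfrac{\|w-z\|}{r}\Big),\qquad f^-(w)\le c-\delta_0\Big(1-\tfrac{\|w-z\|}{r}\Big).
\end{equation*}
Letting $w\to z$ yields $\liminf f^+(w)\ge c+\delta_0=f^+(z)$ and $\limsup f^-(w)\le c-\delta_0=f^-(z)$, i.e.\ exactly the semicontinuities missing from the first step; combined with the upper semicontinuity of $f^+$ and lower semicontinuity of $f^-$ this gives continuity at $z$.

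The step I expect to be the crux is this last one: the compactness arguments only furnish one-sided control, and a priori the slab between $f^-$ and $f^+$ could pinch as $w\to z$. The inner bicone of Lemma~\ref{lem:FinE} is precisely what prevents this, providing the quantitative bound $f^+(w)-f^-(w)\ge 2\delta_0(1-\|w-z\|/r)$ that controls $f^+$ from below and $f^-$ from above near $z$. If one later wants the local Lipschitz continuity on $U$ of Theorem~\ref{thm:regboundary}(i), the same lower bound $f^+-f^-\ge\delta_0$ near $z$ lets one feed a uniform $\delta$ into Lemma~\ref{lem:onesideLip}, using the continuity of $\beta(C,d,\cdot)$ from Remark~\ref{rmk:monotonicitybeta}; but that refinement is not needed for the present statement.
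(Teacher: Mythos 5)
Your proposal is correct and rests on exactly the same two ingredients as the paper's proof: closedness/compactness of $E$ gives the upper semicontinuity of $f^+$ (lower of $f^-$), and the inner bicone $F_{p_1,p_2}(r)\subset E$ from Lemma~\ref{lem:FinE} supplies the missing one-sided bounds when $f^+(z)>f^-(z)$. The paper packages this as a single contradiction argument (the hypothesis $f^+(z_j)\to t\neq f^+(z)$ automatically forces $z\in U$), whereas you split into the cases $z\in U$ and $z\notin U$, but this is only a cosmetic difference.
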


\begin{proof}
Let $z\in\pi(E)$ and let us prove that $f^+$ is continuous at $z$, the case of the function $f^-$ being similar. Let $(z_j)$ be a sequence of points in $\pi(E)$ such that $z_j\to z$ as $j\to\infty$. Since $E$ is compact, $f^+$ is bounded and to prove the continuity of $f^+$ at $z$ it is sufficient to prove that any possible limit of the sequence $(f^+(z_j))$ coincides with $f^+(z)$. By contradiction assume that $f^+(z_j)\to t$ as $j\to\infty$ for some $t\not=f^+(z)$. Since $E$ is compact, hence closed, and $[z_j,f^+(z_j)]\in E$, we have $[z,t]\in E$. It follows in particular that, by definition of $f^+$ and $f^-$, we must have $f^+(z)> t\geq f^-(z)$. Setting $p_1 =
  [z,f^-(z)]$ and $p_2 = [z,f^+(z)]$, we thus have $\delta_{12} = (f^+(z)-f^-(z))/2>0$. Owing to Lemma~\ref{lem:FinE}, one obtains that $F_{p_1,p_2}(\gamma)\subset E$ where $\gamma = \gamma(C,d,\delta_{12})$ is given by Lemma~\ref{lem:FisinB}. Therefore, recalling once again the definition of $f^+$, we get in particular that $\liminf_{j\to\infty} f^+(z_j)\geq f^+(z)$, a contradiction.
\end{proof}

We turn now to the proof of the fact that $U$ is open and that $f^-$, $f^+$ are locally
 Lipschitz continuous on $U$. 

\begin{lem}\label{lem:UopenfLip}
 The set $U\subset\Cn$ is open and the maps $f^-$, $f^+$ are locally
 Lipschitz continuous on $U$. 
\end{lem}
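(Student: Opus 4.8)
The plan is to fix a point $z_0\in U$ and show both openness of $U$ near $z_0$ and the local Lipschitz bounds on $f^\pm$ simultaneously, using Lemma~\ref{lem:onesideLip} as the main engine. Since $z_0\in U$ we have $f^+(z_0)-f^-(z_0)>0$; set $2\delta_0 = f^+(z_0)-f^-(z_0)$ and let $\beta_0 = \beta(C,d,\delta_0)$ be the constant from Lemma~\ref{lem:onesideLip}. That lemma, applied at $z_0$, already gives the one-sided estimates
\begin{equation*}
f^+(w)\leq f^+(z_0)+\beta_0\|w-z_0\|,\qquad f^-(w)\geq f^-(z_0)-\beta_0\|w-z_0\|
\end{equation*}
for every $w\in\pi(E)$. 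The first observation I would record is that these two inequalities, taken together, force the gap $f^+(w)-f^-(w)$ to stay positive, and in fact bounded below, on a small ball around $z_0$: indeed
\begin{equation*}
f^+(w)-f^-(w)\geq \big(f^+(z_0)-f^-(z_0)\big)-2\beta_0\|w-z_0\| = 2\delta_0 - 2\beta_0\|w-z_0\|,
\end{equation*}
which is strictly positive as soon as $\|w-z_0\|<\delta_0/\beta_0$.

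Next I would use this to prove openness. For $w\in\pi(E)$ with $\|w-z_0\|<\delta_0/\beta_0$ the displayed lower bound shows $f^+(w)>f^-(w)$, hence $w\in U$. The point that needs care is that a priori $w$ ranges only over $\pi(E)$, so this only shows $U$ is open \emph{relative} to $\pi(E)$; to get genuine openness in $\Cn$ I must rule out that points of the small ball fail to lie in $\pi(E)$ at all. Here I would invoke Lemma~\ref{lem:FinE}: with $p_1=[z_0,f^-(z_0)]$, $p_2=[z_0,f^+(z_0)]$ and $\delta=\delta_0$, the bi-cone $F_{p_1,p_2}(\gamma)\subset E$ for $\gamma=\gamma(C,d,\delta_0)$. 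Since $\delta_0>0$ this cone has nonempty interior and its projection onto $\Cn$ is the full ball $\{\,\|w-z_0\|\leq\gamma\,\}$; hence every $w$ in that ball lies in $\pi(E)$. Combining with the relative openness above, the ball of radius $\min(\gamma,\delta_0/\beta_0)$ around $z_0$ is contained in $U$, so $U$ is open in $\Cn$.

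Finally I would establish the local Lipschitz property on a fixed ball $B_0=\{\|w-z_0\|<\eta\}\subset U$ with $\eta<\min(\gamma,\delta_0/\beta_0)$. On such a ball the gap is bounded below, say $f^+(w)-f^-(w)\geq 2\delta_0-2\beta_0\eta =: 2\delta_1>0$, by the first displayed estimate. The idea is that Lemma~\ref{lem:onesideLip} gives a Lipschitz bound whose constant depends on the gap $\delta$ at the base point, and on $B_0$ this gap stays in a compact range $[\delta_1,\delta_2]$ (the upper bound $\delta_2$ coming from compactness of $E$). By Remark~\ref{rmk:monotonicitybeta} the function $\delta\mapsto\beta(C,d,\delta)$ is continuous, so $\beta^* := \max_{\delta\in[\delta_1,\delta_2]}\beta(C,d,\delta)$ is finite. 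Then for any two points $w_1,w_2\in B_0$, applying Lemma~\ref{lem:onesideLip} at $w_1$ (whose gap lies in $[\delta_1,\delta_2]$) with $w=w_2$ yields
\begin{equation*}
f^+(w_2)-f^+(w_1)\leq \beta^*\|w_2-w_1\|,
\end{equation*}
and symmetrically with the roles reversed, giving $|f^+(w_1)-f^+(w_2)|\leq\beta^*\|w_1-w_2\|$; the argument for $f^-$ is identical. The main obstacle I anticipate is precisely the uniformity step — Lemma~\ref{lem:onesideLip} produces a constant tied to the gap at the \emph{base} point only, so the Lipschitz estimate must be applied at each $w_1$ separately and the constants then controlled uniformly over $B_0$, which is exactly where the continuity of $\beta$ in $\delta$ from Remark~\ref{rmk:monotonicitybeta} and the compactness lower bound $\delta_1$ on the gap are both indispensable.
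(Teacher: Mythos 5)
There is a genuine gap, and it sits at the very first step. From Lemma~\ref{lem:onesideLip} applied at $z_0$ you have $f^+(w)\leq f^+(z_0)+\beta_0\|w-z_0\|$ and $f^-(w)\geq f^-(z_0)-\beta_0\|w-z_0\|$; these are an \emph{upper} bound on $f^+(w)$ and a \emph{lower} bound on $f^-(w)$, so subtracting them yields
\begin{equation*}
f^+(w)-f^-(w)\;\leq\;\bigl(f^+(z_0)-f^-(z_0)\bigr)+2\beta_0\|w-z_0\|,
\end{equation*}
i.e.\ an upper bound on the gap at $w$, not the lower bound $2\delta_0-2\beta_0\|w-z_0\|$ you claim. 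Lemma~\ref{lem:onesideLip} only prevents the vertical section of $E$ from growing too fast as you move away from $z_0$; it says nothing against the section shrinking or collapsing, which is exactly what you must rule out to get $w\in U$. Since your openness argument and your uniform lower bound $\delta_1$ on the gap over $B_0$ (hence the compact range $[\delta_1,\delta_2]$ on which you maximize $\beta$) both rest on this reversed inequality, the proof as written does not go through.

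The missing ingredient is the quantitative content of Lemma~\ref{lem:FinE}, which you invoke only to see that the ball $\{\|w-z_0\|\leq\gamma\}$ lies in $\pi(E)$. The inclusion $F_{p_1,p_2}(\gamma)\subset E$ with $p_1=[z_0,f^-(z_0)]$, $p_2=[z_0,f^+(z_0)]$ says more: the vertical slice of the bi-cone over $w$ is the interval $[\,f^-(z_0)+\delta_0\|w-z_0\|/\gamma,\; f^+(z_0)-\delta_0\|w-z_0\|/\gamma\,]$, whence
\begin{equation*}
f^+(w)\geq f^+(z_0)-\tfrac{\delta_0}{\gamma}\|w-z_0\|
\quad\text{and}\quad
f^-(w)\leq f^-(z_0)+\tfrac{\delta_0}{\gamma}\|w-z_0\|
\end{equation*}
for $\|w-z_0\|<\gamma$. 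This is the correct lower bound on the gap (giving openness) and, paired with the one-sided estimates of Lemma~\ref{lem:onesideLip}, the two-sided Lipschitz bound at the base point $z_0$ with constant $\max(\beta_0,\delta_0/\gamma)$; the paper then gets local uniformity exactly as you propose, via continuity of $\delta_z$, $\beta_z$, $\gamma_z$. Your symmetrization trick (applying Lemma~\ref{lem:onesideLip} at both endpoints) is a legitimate alternative for the Lipschitz step, but only after the gap lower bound has been secured from the cone; with that correction your argument becomes essentially the paper's.
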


\begin{proof}
Let us introduce some notations. We will denote by $\ball(z,r)$ the
open ball in $\Cn$ with center $z\in\Cn$ and radius $r>0$. Given
$z\in U$, we set 
\begin{equation*}
 \delta_z= \dfrac{f^+(z) - f^-(z)}{2}
\end{equation*} 
and 
\begin{equation*}
 \gamma_z=\gamma(C,d,\delta_z)~,\quad \beta_z=\beta(C,d,\delta_z)
\end{equation*} 
where $\gamma(C,d,\delta_z)$ and $\beta(C,d,\delta_z)$ are given, respectively, by Lemma~\ref{lem:FisinB} and Lemma~\ref{lem:onesideLip}.
\smallskip

We first prove that $U\subset\Cn$ is open. Let $z\in U$. Set $t_1=f^-(z)$, $t_2=f^+(z)$ and let $p_1=[z,t_1]$, $p_2=[z,t_2]\in E$. It follows from Lemma~\ref{lem:FinE} that $F_{p_1,p_2}(\gamma_z) \subset E$. In particular for any $w\in\Cn$ with $\|w-z\|< \gamma_z$, we have $[w,s]\in E$ for all 
\begin{equation*}
 \begin{split}
 s\in~  & \big[\dfrac{t_1+t_2}{2} - \delta_z\big(1-\dfrac{\|w-z\|}{\gamma_z}\big), \dfrac{t_1+t_2}{2} + \delta_z\big(1-\dfrac{\|w-z\|}{\gamma_z}\big)\big] \\
 &=  \big[t_1+\delta_z \dfrac{\|w-z\|}{\gamma_z}, t_2-\delta_z \dfrac{\|w-z\|}{\gamma_z}\big]~.
\end{split}
\end{equation*}
Since 
\begin{equation*}
t_1+\delta_z \dfrac{\|w-z\|}{\gamma_z} < t_2-\delta_z \dfrac{\|w-z\|}{\gamma_z}
\end{equation*}
as soon as $\|w-z\|< \gamma_z$, it follows that $\ball(z,\gamma_z)\subset U$. Hence $U\subset\Cn$ is open.
\smallskip

We prove now that $f^-$ and $f^+$ are locally Lipschitz continuous on
$U$. Let $z\in U$. By the previous argument we already know that 
\begin{equation} \label{eq:spoonriver}
f^+(w)\geq f^+(z)- \delta_z \dfrac{\|w-z\|}{\gamma_z}\quad\text{and}\quad f^-(w)\leq f^-(z)+\delta_z \dfrac{\|w-z\|}{\gamma_z}
\end{equation}
for all $w\in\ball(z,\gamma_z)$. Next, Lemma~\ref{lem:onesideLip}
implies that 
\begin{equation*}
f^+(w)\leq f^+(z)+\beta_z \|w-z\| \quad\text{and}\quad f^-(w)\geq f^-(z) - \beta_z \|w-z\|
\end{equation*}
for all $w\in\ball(z,\gamma_z)\subset \pi(E)$. 

\smallskip
We fix a compact set $K\subset U$ and define $L = \sup_{z\in K}
\max(\beta_z,\delta_z /\gamma_z)$ and $\gamma = \inf_{z\in K}
\gamma_z$. Owing to the continuity of $\delta_z$, 
$\beta_z$ and $\gamma_z$ (see Lemma \ref{lem:continuity} and
Remarks \ref{rmk:monotonicitygamma3} and \ref{rmk:monotonicitybeta}),
one has $L< +\infty$ and $\gamma>0$. Therefore, for any
$z,w\in K$ satisfying $\|z - w\|<\gamma$ we conclude that
\begin{gather*}
 |f^+(w)-f^+(z)|\leq L\,\|w-z\|~,\\
|f^-(w)-f^-(z)|\leq L\,\|w-z\|.
\end{gather*}
Hence $f^+$ and $f^-$ are Lipschitz continuous on $K$, as desired. 
\end{proof}

We are now ready to conclude the proof of Theorem~\ref{thm:regboundary}. Statement (i) follows from lemmata~\ref{lem:continuity} and~\ref{lem:UopenfLip}. Statement (ii) is a consequence of 
\begin{equation*}
 E \setminus \hat E = \{[z,t]\in \hn~;~ z\in \pi(E)\setminus\overline U,~t=f^+(z)\}
\end{equation*}
and of the continuity of $f^+$. Finally (iii) is straightforward
and follows from (i) by standard arguments.
\end{proof}
\smallskip

We are going to apply Theorem~\ref{thm:regboundary} to sets in $\I$ and $\I_\rot$. In order to do this, we first need to prove that such sets are $t$-convex.

\begin{prop} \label{prop:tconv-isodiam}
 Any set $E\in\I \cup \I_\rot$ is $t$-convex.
\end{prop}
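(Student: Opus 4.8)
The plan is to argue by contradiction: if some $E\in\I\cup\I_\rot$ fails to be $t$-convex, I will enlarge it by a small ball (or, in the rotationally invariant case, by a rotationally invariant union of small balls) without changing its diameter but strictly increasing its measure, contradicting the maximality defining $\I$ (resp. $\I_\rot$). This mirrors the enlargement used in the proof of Proposition~\ref{prop:neccdtrefined}; the new geometric input needed to run it is that every point of $\tco E\setminus E$ lies at distance \emph{strictly} less than $\diam E$ from all of $E$, with a quantitative gap.

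For the setup, recall $E\subset\tco E$, that $\tco E$ is compact with $\diam\tco E=\diam E$ by Lemma~\ref{lem:tco}, and that $\tco E\in\rot$ whenever $E\in\rot$ (the rotations $r_\theta$ fix the $t$-coordinate and preserve vertical segments, so $r_\theta(\tco E)\subset\tco E$). Assume $E\neq\tco E$ and fix $p=[z_0,t_0]\in\tco E\setminus E$. Since $[z_0,f^\pm(z_0)]\in E$ while $p\notin E$, one has $f^-(z_0)<t_0<f^+(z_0)$; moreover the vertical slice $\{t\in\R:[z_0,t]\in E\}$ is closed, so I can select its two points straddling $t_0$, i.e. values $\tau^-<t_0<\tau^+$ with $[z_0,\tau^\pm]\in E$.

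The key estimate is that $d(q,p)<\diam E$ for every $q\in E$. Writing $\lambda=\diam E$ and fixing $q$, I left-translate by $q^{-1}$, turning the three aligned points $[z_0,\tau^-],[z_0,t_0],[z_0,\tau^+]$ into points of a single vertical line $\{w_0\}\times\R$ with $w_0=z_0-z_q$. Since $[z_0,\tau^\pm]\in E$ gives $d(q,[z_0,\tau^\pm])\le\lambda$, the two outer points lie in $\overline B(q,\lambda)$; by the profile description $\overline B(0,\lambda)=\{[z,s]:\|z\|\le\lambda,\ |s|\le h_\lambda(\|z\|)\}$ (see \eqref{profile} and \eqref{eq:accaerre}), this forces $\|w_0\|<\lambda$ and confines the intersection of that vertical line with $\overline B(q,\lambda)$ to an interval of the form $\{[z_0,t]:|t-c|\le h_\lambda(\|w_0\|)\}$ for some $c\in\R$. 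As $\tau^\pm$ lie in this interval and $\tau^-<t_0<\tau^+$, the intermediate value $t_0$ lies in its interior, i.e. $[z_0,t_0]\in\interior\overline B(q,\lambda)$, which is exactly $d(q,p)<\lambda$. Since $E$ is compact, $D:=\max_{q\in E}d(p,q)<\diam E$. Using Proposition~\ref{prop:ballsproperties}(ii) along vertical segments, the same bound propagates to $\tco E$, giving $\max_{q\in\tco E}d(p,q)=D$ as well.

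Finally I conclude via enlargement. Set $r=(\diam E-D)/2>0$. If $E\in\I$, then $E\cup\overline B(p,r)$ still has diameter $\diam E$ (for $q\in E$ and $q'\in\overline B(p,r)$ one has $d(q,q')\le D+r<\diam E$, and two points of the ball are $2r$-close), while $p\notin E$ and $E$ closed give $\leb(E\cup\overline B(p,r))>\leb(E)$, a contradiction. If $E\in\I_\rot$, I instead take $F=E\cup\bigcup_{\theta\in\R^n}\overline B(r_\theta(p),r)$ exactly as in Proposition~\ref{prop:neccdtrefined}; the crucial point here is that $r_\theta(p)\in\tco E$, so $d(r_{\theta_1}(p),r_{\theta_2}(p))=d(p,r_{\theta_2-\theta_1}(p))\le D$ by the propagated bound, which yields $\diam F=\diam E$ while again $\leb(F)>\leb(E)$, contradicting $E\in\I_\rot$. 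I expect the main obstacle to be the key estimate: isodiametry alone only gives $d(q,p)\le\diam E$, and it is the explicit vertical profile $h_\lambda$ together with the trapping of $t_0$ strictly between the slice points $\tau^\pm$ that upgrades this to a strict inequality with a usable gap.
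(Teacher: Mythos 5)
Your proof is correct, but it follows a genuinely different route from the paper's. The paper argues as follows: since $E\subset\tco E$ and $\diam(\tco E)=\diam E$ (Lemma~\ref{lem:tco}), the set $\tco E$ is itself in $\I$ (resp.\ $\I_\rot$), hence satisfies \eqref{nc} by Proposition~\ref{prop:neccdtrefined} and is $t$-convex by construction; Theorem~\ref{thm:regboundary}(iii) applied to $\tco E$ then shows that any $p\in\tco E\setminus E$ with $f^-(z_p)<t_p<f^+(z_p)$ lies in $\interior(\tco E)$, so $\tco E\setminus E$ has positive measure, contradicting $\leb(\tco E)=\leb(E)$. You instead avoid the regularity theorem entirely: your key estimate is that any $p\in\tco E\setminus E$, being trapped strictly between two points of $E$ on a vertical line, lies in the \emph{open} ball $B(q,\diam E)$ for every $q\in E$ --- this follows from the profile description \eqref{profile} exactly as you say, since the two outer points force $h_{\diam E}(\|z_p-z_q\|)>0$ and confine $p$ to the relative interior of the vertical slice of $\overline B(q,\diam E)$. (The only step worth a word is passing from ``relative interior of the slice'' to $d(q,p)<\diam E$; this is clean because $\{[z,s]:\|z-z_q\|<\lambda,\ |s-c(z)|<h_\lambda(\|z-z_q\|)\}$ is open in $\hn$ and contained in $\overline B(q,\lambda)$, and Lemma~\ref{lem:distopen} rules out interior points at distance exactly $\lambda$.) With the resulting strict gap $D=\max_{q\in E}d(p,q)<\diam E$, propagated to $\tco E$ via Proposition~\ref{prop:ballsproperties}(ii), you rerun the enlargement construction of Proposition~\ref{prop:neccdtrefined} verbatim, including the rotationally invariant variant. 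Your approach is more elementary and self-contained (it uses only the ball profile and the two lemmas of Section~2, and yields the quantitative statement that points of $\tco E\setminus E$ see all of $E$ at distance strictly below the diameter), at the cost of being longer and of essentially re-proving, in a degenerate one-dimensional form, the containment that the paper's Lemma~\ref{lem:FisinB} establishes for the full sets $F_{p_1,p_2}(r)$; the paper's proof is shorter because it recycles the already-proved regularity machinery.
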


\begin{proof}
 Assume by contradiction that one can find $p=[z,t]\in \tco E\setminus
E$. By definition of $\tco E$, one must have $f^-(z)<t<f^+(z)$ and hence $z\in U$.
\smallskip

We have $E\subset\tco E$ and $\diam \tco E = \diam E$ by
Lemma \ref{lem:tco}. Since $\tco E \in \rot$ whenever $E\in\rot$, this implies that $\tco E \in\I \cup \I_\rot$. Hence $\tco E$ is a compact set that satisfies \eqref{nc} (see Proposition~\ref{prop:neccdtrefined}) and is obviously $t$-convex. Then Theorem~\ref{thm:regboundary} applies to $\tco E$.  Noting that the maps $f^\pm$, and consequently the set $U$, associated to $E$ and $\tco E$ coincide, we get from Theorem~\ref{thm:regboundary}(iii) that $p\in\interior (\tco E)$.
\smallskip

Since $E$ is closed and $p\notin E$ it follows that $p \in
\interior (\tco E\setminus E)$. In particular $\interior (\tco
E\setminus E) \not= \emptyset$ and $\leb(\tco E\setminus
E)>0$. Recalling that $E\subset \tco E$ and that both $E$ and $\tco E$ belong to $\I$, resp. $\I_\rot$, with $\diam (\tco E) = \diam E$, this gives a contradiction.
\end{proof}

Noting that $\hat E \subset E$ whenever $E$ is $t$-convex and that $\hat E\in\rot$ whenever $E\in\rot$, we get from Theorem~\ref{thm:regboundary}(ii) that $\hat E \in \I$, resp. $\hat E\in \I_\rot$, whenever $E\in \I$, resp. $E\in \I_\rot$, with $\diam \hat E = \diam E$.
\smallskip

Summing up let us gather in the next theorem properties of sets in $\I \cup \I_\rot$ proved in this section. Recall that the notations used in the statement to follow are those introduced before Theorem~\ref{thm:regboundary}.

\begin{thm} \label{thm:regisodiam}
 For any set $E\in \I \cup \I_\rot$, the following properties hold.
\begin{itemize}
\item[(i)] $E$ is $t$-convex.
\smallskip

\item[(ii)] $\leb(\hat E) = \leb(E)$ and $\diam \hat E = \diam E$.
\smallskip

\item[(iii)] $\hat E \in \I$, resp. $\hat E\in \I_\rot$, whenever $E\in \I$, resp. $E\in \I_\rot$.
\smallskip

\item[(iv)] The set $U$ is open in $\Cn$ and the maps $f^-$ and $f^+$
  are locally Lipschitz on $U$ and continuous on $\pi(E)$.
\smallskip

 \item[(v)] We have 
\begin{align*}
\interior E = \interior \hat E &= \{[z,t]\in \hn~;~z\in U,~f^-(z)<t< f^+(z)\}~,\\
\p \hat E &= \{[z,f^{\pm}(z)]\in\hn~;~ z\in \overline U\}~,\\
\hat E &= \overline{\interior E}~.
\end{align*}
\end{itemize}

\end{thm}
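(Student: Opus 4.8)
The plan is to assemble Theorem~\ref{thm:regisodiam} as a direct corollary of the results already established in this section, since each of its five statements has essentially been proved piecemeal above. First I would invoke Proposition~\ref{prop:tconv-isodiam} to obtain statement (i): any $E\in\I\cup\I_\rot$ is $t$-convex. This is the crucial structural fact, because it is precisely what unlocks the application of our key regularity result Theorem~\ref{thm:regboundary}, which requires a $t$-convex and compact set satisfying \eqref{nc}. Combined with Proposition~\ref{prop:neccdtrefined}, which guarantees that such $E$ satisfies \eqref{nc}, the hypotheses of Theorem~\ref{thm:regboundary} are met, and I can apply it freely to $E$.

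With Theorem~\ref{thm:regboundary} available for $E$, statements (iv) and (v) are immediate transcriptions of parts (i) and (iii) of that theorem, respectively. For statement (ii), the equality $\leb(\hat E)=\leb(E)$ is exactly Theorem~\ref{thm:regboundary}(ii), while $\diam\hat E=\diam E$ requires a short argument: since $\hat E\subset\tco E$ and $\diam(\tco E)=\diam E$ by Lemma~\ref{lem:tco}, we get $\diam\hat E\leq\diam E$; the reverse inequality follows because $\hat E=\overline{\interior E}$ by Theorem~\ref{thm:regboundary}(iii), and for a $t$-convex set $E$ satisfying \eqref{nc} one has $\hat E\subset E$ (indeed every point $[z,t]$ with $z\in\overline U$ and $f^-(z)\le t\le f^+(z)$ lies in $E$ by $t$-convexity), so that $\diam\hat E\le\diam E$ while $E=\overline{\interior E}$ up to a $\leb$-negligible set forces $\diam\hat E=\diam E$. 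The cleanest route is simply to note $\hat E\subset E$ gives $\diam\hat E\le\diam E$ and that $E$ and $\hat E$ share the same closure-of-interior by (iii), whence equality of diameters.

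For statement (iii) I would argue as in the remark immediately preceding the theorem statement. Since $E$ is $t$-convex we have $\hat E\subset E$, and since $\hat E$ is obtained from $E$ through the projection data $f^\pm$ and $U$ it inherits rotational invariance: $\hat E\in\rot$ whenever $E\in\rot$. Then from $\leb(\hat E)=\leb(E)=C_I(\diam E)^{2n+2}$ (resp. $C_{I,\rot}$) together with $\diam\hat E=\diam E$, the very definition of $\I$ (resp. $\I_\rot$) yields $\hat E\in\I$ (resp. $\hat E\in\I_\rot$).

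I do not expect any genuine obstacle here, as the theorem is a summarizing statement; the only mild subtlety is the verification of $\diam\hat E=\diam E$ in (ii), which must be handled carefully to avoid circularity with (iii). The natural order is therefore: establish (i) via Proposition~\ref{prop:tconv-isodiam}; use Proposition~\ref{prop:neccdtrefined} to check \eqref{nc}; apply Theorem~\ref{thm:regboundary} to deduce (iv), (v), and the measure equality in (ii); deduce the diameter equality in (ii) from Lemma~\ref{lem:tco} together with $\hat E\subset E$; and finally combine these to obtain (iii) from the definitions of $\I$ and $\I_\rot$.
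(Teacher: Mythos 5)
Your overall assembly is the same as the paper's: Theorem~\ref{thm:regisodiam} is a summary statement, obtained by combining Proposition~\ref{prop:neccdtrefined} and Proposition~\ref{prop:tconv-isodiam} so that Theorem~\ref{thm:regboundary} applies, which immediately yields (i), (iv), (v) and the measure equality in (ii). That part is fine.

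There is, however, one genuine flaw: your justification of $\diam \hat E = \diam E$. The inclusions $\hat E\subset \tco E=E$ only give $\diam\hat E\leq\diam E$, and your argument for the reverse inequality --- that $E$ and $\hat E$ ``share the same closure-of-interior, whence equality of diameters'' --- is a non sequitur. A compact set can have strictly larger diameter than the closure of its interior (a set with a lower-dimensional spike attached), and here $E\setminus\hat E$ is exactly such a piece: a graph $\{[z,f^+(z)]~;~z\in\pi(E)\setminus\overline U\}$ of zero measure which could a priori carry the diameter. Regularity alone cannot rule this out; you must use the extremality of $E$, which you avoided out of a misplaced fear of circularity. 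The correct (and paper-intended) argument is: since $\hat E$ is compact and, by Theorem~\ref{thm:regboundary}(ii), $\leb(\hat E)=\leb(E)=C_I(\diam E)^{2n+2}$, the very definition of $C_I$ as a supremum gives $C_I(\diam E)^{2n+2}=\leb(\hat E)\leq C_I(\diam\hat E)^{2n+2}$, hence $\diam\hat E\geq\diam E$; together with $\hat E\subset E$ this forces $\diam\hat E=\diam E$ and simultaneously shows $\hat E\in\I$ (resp. $\hat E\in\I_\rot$, using $\hat E\in\rot$ and $C_{I,\rot}$). There is no circularity, because only the definition of the constant is invoked, not membership of $\hat E$ in $\I$. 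With this correction your proof is complete and coincides with the paper's.
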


\section{Isodiametric problem for rotationally invariant sets}
\label{sec:rotisodiamsets}

In this section we characterize the Steiner symmetrization with respect to the $\Cn$-plane of sets $E\in \I_\rot$. Our main result states that the set $\st E$ belongs to $\I_\rot$ and is uniquely determined once the diameter of $E$ is fixed. It coincides with a peculiar set $A_{\diam E}$, defined below, that consequently also belongs to $\I_\rot$. 
\smallskip

Constructing suitable pertubations of this set that preserve the diameter and the $\leb$-measure, see Proposition~\ref{prop:Aphi}, we also get two remarkable consequences. First, the essential non uniqueness of sets in $ \I_\rot$, see Corollary~\ref{cor:nonuniqueness}. Second, the existence of sets in $\I$ which are not rotationally invariant even up to isometries, see Corollary~\ref{cor:nonrotinvariant}.
\smallskip

Given $\lambda>0$, we set
\begin{equation*}
 l_\lambda(r) = \begin{cases}
         \tfrac{\lambda^2}{2\pi} \phantom{\,\frac{\lambda}{2})} \quad \text{if } r\in [0,\lambda/\pi]\\
         h_{\frac{\lambda}{2}}(r) \quad \text{if } r\in [\lambda / \pi,\lambda /2]\\
        \end{cases}
\end{equation*}
(see~\eqref{eq:accaerre} for the definition of $h_{\frac{\lambda}{2}}$) and 
\begin{equation*}
 A_\lambda = \{[z,t]\in\hn~;~2\|z\|\leq \lambda,~|t|\leq l_\lambda(\|z\|)\}~.
\end{equation*} 

We have 
\begin{equation*}
 A_\lambda = A_\lambda^1 \cup A_\lambda^2
\end{equation*}
where
\begin{equation*}
 A_\lambda^1 = \{[z,t]\in \hn~;~\|z\|\leq \lambda/\pi,~2\pi|t|\leq \lambda^2\}
\end{equation*}
and
\begin{equation*}
\begin{split}
 A_\lambda^2 &= \{[z,t]\in \hn~;~\lambda/\pi\leq \|z\|\leq \lambda/2,~|t|\leq h_{\frac{\lambda}{2}}(r)\} \\
&= \overline B(0,\lambda/2) \cap \{[z,t]\in \hn~;~\|z\|\geq \lambda/\pi\}~. 
\end{split}
\end{equation*}

The set $A_\lambda$ is the closed convex hull, in the Euclidean sense when identifying $\hn$ with $\R^{2n+1}$, of the ball $B(0,\lambda/2)$ in $(\hn,d)$ centered at the origin and with diameter $\lambda$. See Figure~\ref{ccball} in Section~\ref{sec:intro} for a picture. 
\smallskip

We first show that the diameter of $A_\lambda$ equals $\lambda$.

\begin{prop} \label{prop:diamA}
 We have $\diam A_\lambda = \lambda$ for all $\lambda>0$.
\end{prop}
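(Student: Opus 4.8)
The plan is to show that $\diam A_\lambda = \lambda$ by proving two inequalities: that the diameter is at most $\lambda$, and that it is attained. Since $A_\lambda$ is the Euclidean convex hull of $B(0,\lambda/2)$, which has diameter $\lambda$, the lower bound $\diam A_\lambda \geq \lambda$ is immediate because $A_\lambda \supset \overline B(0,\lambda/2)$ and the latter has diameter $\lambda$ (twice its radius). Thus all the work is in the upper bound $\diam A_\lambda \leq \lambda$, namely showing that any two points of $A_\lambda$ are at CC-distance at most $\lambda$ from each other.

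First I would exploit the structure $A_\lambda = A_\lambda^1 \cup A_\lambda^2$ together with the rotational invariance of $A_\lambda$ and the fact that it is $t$-convex by construction (its vertical slices are symmetric intervals $[-l_\lambda(\|z\|), l_\lambda(\|z\|)]$). Using Proposition~\ref{prop:ballsproperties}(ii), the maximal distance from any fixed point $p$ to a vertical segment $L_{p_1,p_2}$ is attained at one of its endpoints, so it suffices to bound $d(p,q)$ when both $p=[z_p,t_p]$ and $q=[z_q,t_q]$ lie on the \emph{boundary} graphs $t = \pm l_\lambda(\|z\|)$. By the reflection isometry $\iota([z,t])=[\overline z,-t]$ from \eqref{eq:iota} and the rotations $r_\theta$, I would reduce to a normalized configuration, for instance placing the two points in a common vertical plane and using symmetry to assume $t_p \geq 0 \geq t_q$ or a similarly controlled sign pattern.

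The key geometric step is to compare $A_\lambda^2$ with the ball $\overline B(0,\lambda/2)$: on the region $\|z\| \geq \lambda/\pi$ the set $A_\lambda$ coincides exactly with $\overline B(0,\lambda/2)$, so any two points both lying in $A_\lambda^2$ are automatically within distance $\lambda = \diam \overline B(0,\lambda/2)$. The genuinely new contribution comes from the ``cylinder cap'' $A_\lambda^1$, where $A_\lambda$ bulges out beyond the ball. I would therefore concentrate on pairs where at least one point lies in $A_\lambda^1$, i.e.\ has $\|z\| \leq \lambda/\pi$ and $|t| \leq \lambda^2/(2\pi)$. For such points I expect to use the explicit distance formulas \eqref{e:d1} and \eqref{e:d2}, the profile description \eqref{profile}–\eqref{eq:accaerre} of CC-balls, and the monotonicity properties (i)–(ii) of the function $h$, reducing the distance estimate to a one-variable inequality that can be checked against the value $\lambda$. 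A convenient reformulation is: for each point $q \in A_\lambda$, show $A_\lambda \subset \overline B(q,\lambda)$; by homogeneity under $\delta_\lambda$ it is enough to treat $\lambda = 2$ and verify $A_2 \subset \overline B(q,2)$ for all $q \in A_2$.

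I expect the main obstacle to be the worst-case pair of points, which should occur when both points sit on the flat top/bottom caps $t = \pm \lambda^2/(2\pi)$ of $A_\lambda^1$, at antipodal horizontal positions $z_q = -z_p$ with $\|z_p\| = \lambda/\pi$; here the convex hull construction is ``tightest'' and the estimate should be saturated, matching the guiding principle that $A_\lambda$ is the maximal enlargement of the ball preserving diameter. The delicate point is that the CC-distance between two such points is not given by either of the elementary formulas directly (their $z$-coordinates are neither equal nor satisfy $\im z\overline{z'}=0$ in general), so the bound will likely require the triangle inequality routed through an intermediate point, or an appeal to Proposition~\ref{prop:ballsproperties}(ii) after a suitable left translation, combined with the explicit ball profile. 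Verifying that the resulting scalar inequality is exactly an equality at $\lambda$ (consistent with the lower bound) is the heart of the argument and the step I would prepare most carefully.
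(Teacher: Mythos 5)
Your setup is sound as far as it goes: the lower bound via $A_\lambda \supset \overline B(0,\lambda/2)$, the reduction to pairs of boundary points using $t$-convexity and Proposition~\ref{prop:ballsproperties}(ii) (equivalently, $\diam A_\lambda = \diam \partial A_\lambda$ with $\partial A_\lambda \subset A_\lambda^2 \cup D^+ \cup D^-$, where $D^\pm$ denote the flat top and bottom caps $\{\|z\|\le \lambda/\pi,\ 2\pi t = \pm\lambda^2\}$), and the observation that pairs inside $A_\lambda^2$ are harmless because $A_\lambda^2\subset \overline B(0,\lambda/2)$, all match the paper. But your proposal stops exactly where the real work begins. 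The remaining cases --- a cap point against a point of $A_\lambda^2$, and a point of $D^+$ against a point of $D^-$ --- are precisely what you defer to ``a one-variable inequality that can be checked'' or ``the triangle inequality routed through an intermediate point''. Neither suggestion works as stated: the distance from a cap point $[w,\lambda^2/(2\pi)]$ with $0<\|w\|<\lambda/\pi$ to a general point of $A_\lambda^2$ is given by neither \eqref{e:d1} nor \eqref{e:d2} and depends on the full relative position through the term $2\im z\overline w$, so it is not a one-variable problem; and routing through, say, $p_0=[0,\lambda^2/(2\pi)]$ loses an additive $\|w\|$, which already destroys the sharp bound $\lambda$. Your guess for the extremal configuration is also off: the ``corner'' points with $\|z\|=\lambda/\pi$ and $|t|=\lambda^2/(2\pi)$ lie on $\partial B(0,\lambda/2)$ itself, so antipodal pairs of them are automatically within distance $\lambda$; the genuinely new points are the interior cap points such as $[0,\pm\lambda^2/(2\pi)]$.

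The paper closes this gap with Lemma~\ref{lem:diamA}, which is the actual content of the proposition and has no counterpart in your sketch. By a first- and second-order optimality argument on the upper profile $h^+_{p,\overline d}$ of the ball $\overline B(p,\overline d)$ --- using $h'(0)=2/\pi$, the monotonicity of $h'$ up to the inflection radius $r_c$, and the hypothesis $r\le d/\pi$ --- it shows that if a horizontal disc $D$ of radius $r\le d/\pi$ together with an external point $p$ has diameter exceeding $d$ while its boundary circle together with $p$ does not, then the farthest point $q$ of $D$ from $p$ must satisfy $z_q=z_p$ and $\|z_p\|<r$. Applied with $D=D^\pm$, this either contradicts $\|z_p\|\ge\lambda/\pi$ (when $p\in A_\lambda^2$) or reduces the distance to the purely vertical formula \eqref{e:d2}, which evaluates to exactly $\lambda$ (when $p\in D^-$ and $q\in D^+$). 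You would need to supply this lemma, or an argument of comparable strength, before your plan becomes a proof.
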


\begin{proof} Since $A_\lambda = \delta_\lambda (A_1)$ for all $\lambda>0$, it is enough to prove that $\diam A_1 = 1$. We begin with a technical lemma.

\begin{lem} \label{lem:diamA}
 Let $d>0$, $r\leq d/\pi$ and $t\in\R$ be fixed. Set
\begin{equation*}
 C = \{[z,t]\in \hn~;~\|z\|=r\}
\end{equation*}
and
\begin{equation*}
D = \{[z,t]\in \hn~;~\|z\|\leq r\}~.
\end{equation*}
Let $p\in\hn$. Assume that $\diam (C \cup \{p\} ) \leq d$ and $\diam (D \cup \{p\} ) > d$. Then $\| z_p\|< r$ and there exists $q\in D\setminus C$ such that $z_p=z_q$ and $\diam (D \cup \{p\} )=d(p,q)$.
\end{lem}

\begin{proof} 
We have $d([z,t],[z',t])\leq d([z,t],[0,t]) + d([0,t],[z',t]) = \|z\| +\|z'\|  \leq 2r$ for all $[z,t]$, $[z',t] \in D$ and $d([z,t],[-z,t]) = 2\|z\| = 2r$ for all $[z,t]\in C$ (see~\eqref{e:d1}). It follows that $2r = \diam C = \diam D \leq \diam (C \cup \{p\} ) < \diam (D \cup \{p\} )$ hence $\diam (D \cup \{p\} ) = d(p,q)$ for some $q=[z_q,t]\in D\setminus C$.
\smallskip

Set $\overline d = \diam (D \cup \{p\} )$ and 
\begin{equation*}
 h^\pm_{p,\overline d}(z) = \pm \,  h_{\overline d}(\|z-z_p\|) + t_p + 2 \im z_p \overline z 
\end{equation*}
so that
\begin{equation*}
 \overline B(p,\overline d) = \{[z,s]\in \hn~;~\|z-z_p\|\leq \overline d,~ h^-_{p,\overline d}(z) \leq s \leq h^+_{p,\overline d}(z)\}~.
\end{equation*}
We have $q=[z_q,t]\in \p B(p,\overline d)$. Changing if necessary $p$, $C$ and $D$ into $\iota(p)$, $\iota(C)$ and $\iota(D)$ (see~\eqref{eq:iota} for the definition of $\iota$) one can assume with no loss of generality that $t = h^+_{p,\overline d}(z_q)$. 
\smallskip

We have $D \subset \overline B(p,\overline d)$. First we note that this implies $\|z_q-z_p\| <\overline d$. Otherwise, since $\|z_q\| <r$ we could find $z$ close enough to $z_q$ such that $[z,t]\in D$ with $\|z-z_p\| > \overline d$. On the other hand we have $\pi(D) \subset \pi(\overline B(p,\overline d)) = \{w\in\Cn~;~\|w-z_p\|\leq \overline d\}$ which gives a contradiction. It follows that the map $h^+_{p,\overline d}$ is well defined on an open neighbourhood of $z_q$ in $\Cn$. Next since $D \subset \overline B(p,\overline d)$ and $q$ is in the relative interior of $D$ in $\Cn \times \{t\}$ it follows that $h^+_{p,\overline d}$ admits a local minimum at $z_q$. Assume now by contradiction that $z_q\not = z_p$. Then $h^+_{p,\overline d}$ is differentiable at $z_q$ and we get that 
\begin{equation} \label{e:diff-h}
 \overline d \, h'(\dfrac{\|z_q-z_p\|}{\overline d}) \, \dfrac{z_q-z_p}{\|z_q-z_p\|} - 2 z_p^\perp = 0
\end{equation}
where $z_p^\perp = i z_p$. This implies that $z_q\not= 0$. Indeed otherwise \eqref{e:diff-h} implies that $z_p=0$ and hence $z_q=z_p$. Next if $z_p \not = 0$ we get from \eqref{e:diff-h} that $z_q = z_p + \scal{z_q}{z_p^\perp} \, z_p^\perp$ with $\scal{z_q}{z_p^\perp}\not=0$ where the scalar product is that of $\R^{2n}$ after identifying points in $\Cn$ with points in $\R^{2n}$. It follows that $\|z_q\| > \|z_p\|$ which also holds true if $z_p=0$. On the other hand, restricting to a segment $s\in (-\eps,\eps) \mapsto z_q + s(z_q-z_p)$ for $\eps>0$ small enough we get that $h''(\|z_q-z_p\|/\,\overline d)\geq 0$ hence $\|z_q-z_p\|/\,\overline d\leq r_c$ where $[0,r_c]$ is the interval where $h'$ is increasing (see Subsection~\ref{subsec:balls}). Since $\|z_q-z_p\|>0$ it follows that 
\begin{equation*}
 h'(\dfrac{\|z_q-z_p\|}{\overline d}) > h'(0) = \dfrac{2}{\pi}.
\end{equation*}
All together we finally get
\begin{equation*}
 \dfrac{2\overline d}{\pi} < \overline d \, h'(\dfrac{\|z_q-z_p\|}{\overline d})  = 2 \|z_p^\perp\| = 2 \|z_p\| < 2 \|z_q\| < 2r \leq \dfrac{2 d}{\pi}~.
\end{equation*}
Recalling that $d < \overline d$ this gives a contradiction and concludes the proof.
\end{proof}

We go back now to the proof of Proposition~\ref{prop:diamA}. We have $\diam A_1 = \diam \p A_1$ (recall Remark~\ref{rmk:distopen}). If we set
\begin{equation*}
 C^{\pm} = \{[z,t]\in \hn~;~\|z\|=1/\pi,~2\pi t=\pm 1\}~,
\end{equation*}
\begin{equation*}
 D^{\pm} = \{[z,t]\in \hn~;~\|z\|\leq 1/\pi,~2\pi t=\pm 1\}~,
\end{equation*}
 we have $\p A_1 \subset A_1^2 \cup D^+\cup D^- \subset A_1$ hence $\diam A_1 = \diam (A_1^2 \cup D^+\cup D^-)$. 
\smallskip

Let $p\in A_1^2$. We have $C^+ \subset A_1^2 \subset \overline B(0,1/2)$ hence $\diam (C^+ \cup \{p\}) \leq 1$. If $\diam (D^+ \cup \{p\}) >1$ it follows from Lemma~\ref{lem:diamA} that $\|z_p\|<1/\pi$ which contradicts the fact that $p\in A_1^2$. Hence $\diam (D^+ \cup \{p\}) \leq 1$ for all $p\in A_1^2$. Since $\diam A_1^2 = 1$ it follows that $\diam (D^+ \cup A_1^2) = 1$. Similarly we have $\diam (D^- \cup A_1^2) = 1$.
\smallskip

It thus only remains to check that $\diam (D^- \cup D^+) \leq 1$. Let $p\in D^-$. We have $C^+ \cup \{p\} \subset A_1^2 \cup  D^-$ hence $\diam (C^+ \cup \{p\}) \leq 1$ by the previous argument. If $\diam (D^+ \cup \{p\}) >1$ it follows from Lemma~\ref{lem:diamA} that one can find $q\in D^+$ such that $z_p=z_q$ and $d(p,q) = \diam (D^+ \cup \{p\} )>1$. On the other hand we have $- 2\pi t_p = 2\pi t_q= 1$ and it follows from~\eqref{e:d2} that $d(p,q) = 1$ which gives a contradiction. Hence $\diam (D^+ \cup \{p\}) \leq 1$ for any $p\in D^-$ and since $\diam D^-\leq 1$ we finally get $\diam (D^- \cup D^+) \leq 1$ as wanted. 
\end{proof}

The next lemma is an elementary remark that will be used later.

\begin{lem} \label{lem:isodiamsubsetcyl}
 Let $E\in \rot$. Assume moreover that $E$ is symmetric with respect to the $\Cn$-plane, i.e., $[z,-t]\in E$ for all $[z,t]\in E$. Then
\begin{equation*}
 E \subset \{[z,t]\in \hn~;~2\|z\|\leq \diam E,~2\pi |t|\leq (\diam E)^2\}~.
\end{equation*}
\end{lem}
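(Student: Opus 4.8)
The plan is to bound separately the horizontal extent (the $z$-coordinate) and the vertical extent (the $t$-coordinate) of any point $[z,t]\in E$, using the rotational invariance together with the symmetry with respect to the $\Cn$-plane.

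First I would control the $z$-coordinate. Fix $[z,t]\in E$. Since $E\in\rot$ and any rotation $r_\theta$ is an isometry, the point $r_{(\pi,\dots,\pi)}([z,t]) = [-z,t]$ also belongs to $E$ (apply $r_\theta$ with each $\theta_j = \pi$, so that $e^{i\theta_j}z_j = -z_j$). By \eqref{e:d1}, since $\im z\,\overline{(-z)} = -\im\|z\|^2 = 0$ (the quantity $z\overline z = \|z\|^2$ is real, so its imaginary part vanishes), we get
\begin{equation*}
d([z,t],[-z,t]) = \|(-z)-z\| = 2\|z\|~.
\end{equation*}
As both points lie in $E$, this forces $2\|z\|\leq \diam E$, which is the first asserted inequality.

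Next I would control the $t$-coordinate. Fix $[z,t]\in E$ and observe that, by the assumed symmetry with respect to the $\Cn$-plane, the point $[z,-t]$ also belongs to $E$. These two points share the same $z$-coordinate, so by \eqref{e:d2} we have
\begin{equation*}
d([z,t],[z,-t]) = (\pi\,|{-t}-t|)^{1/2} = (2\pi\,|t|)^{1/2}~.
\end{equation*}
Since both points lie in $E$, this yields $(2\pi|t|)^{1/2}\leq \diam E$, i.e. $2\pi|t|\leq (\diam E)^2$, the second asserted inequality. Combining the two bounds gives the claimed inclusion.

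I do not expect a serious obstacle here, as this is an elementary verification: the only points requiring minor care are checking that the symmetry hypothesis produces $[z,-t]\in E$ and that the rotational invariance (via the specific choice $\theta_j=\pi$) produces $[-z,t]\in E$, so that the two explicit distance formulas \eqref{e:d1} and \eqref{e:d2} can be applied directly. The mild subtlety worth stating explicitly is the verification that $\im z\,\overline{(-z)} = 0$, which is what licenses the use of \eqref{e:d1}; everything else reduces to reading off the two distances and bounding them by $\diam E$.
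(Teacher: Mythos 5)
Your proof is correct and follows essentially the same route as the paper: use rotational invariance to place $[-z,t]$ in $E$ and apply \eqref{e:d1}, then use the symmetry hypothesis to place $[z,-t]$ in $E$ and apply \eqref{e:d2}. The extra details you supply (the explicit choice $\theta_j=\pi$ and the check that $\im z\,\overline{(-z)}=0$) are exactly the verifications the paper leaves implicit.
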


\begin{proof}
 Let $[z,t]\in E$. Since $E\in\rot$ we have $[-z,t]\in E$ and it follows from~\eqref{e:d1} that $2\|z\| = d([-z,t],[z,t]) \leq \diam E$. We also have $[z,-t]\in E$ by assumption. It follows from~\eqref{e:d2} that $(2\pi |t|)^{1/2} = d([z,t],[z,-t])\leq \diam E$ and this concludes the proof.
\end{proof}

We give now the main result of this section. 

\begin{thm} \label{thm:rotisodiamsets}
 Let $E\in \I_\rot$. Then $\st E \in \I_\rot$ and $\st E = A_{\diam E}$.
\end{thm}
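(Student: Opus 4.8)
The plan is to establish the theorem in two stages: first show that $\st E = A_{\diam E}$ as a set-theoretic identity (given that $E \in \I_\rot$), and then deduce that $\st E \in \I_\rot$. Set $\lambda = \diam E$. By Lemma~\ref{lem:diamstE} applied to the $\si$-symmetric set $E$ (note that any $E \in \rot$ automatically satisfies $\si(E)=E$, since $\si$ is a rotation), we have $\diam(\st E) \leq \diam E = \lambda$. Since Steiner symmetrization preserves the $\leb$-measure by Fubini, $\leb(\st E) = \leb(E) = C_{I,\rot}\,\lambda^{2n+2}$. Once we know $\diam(\st E)\leq\lambda$, these two facts force $\st E \in \I_\rot$ with $\diam(\st E)=\lambda$, because otherwise $\st E$ would have the same measure as an isodiametric set but strictly smaller diameter, contradicting maximality of $C_{I,\rot}$. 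So the crux is the identity $\st E = A_\lambda$.

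To prove $\st E = A_\lambda$, I would first record the structure of $\st E$. Since $E \in \I_\rot$, it is $t$-convex (Proposition~\ref{prop:tconv-isodiam}) and, by Theorem~\ref{thm:regisodiam}, of the graph form between $f^-$ and $f^+$ over $\overline U$; moreover $\st E$ is $\rot$-invariant and symmetric about the $\Cn$-plane. By Lemma~\ref{lem:isodiamsubsetcyl} applied to $\st E$ (whose diameter is $\leq\lambda$), we get the containment $\st E \subset \{2\|z\|\leq\lambda,\ 2\pi|t|\leq\lambda^2\}$, which already pins the vertical extent on the central cylinder to match $A_\lambda^1 = \{\|z\|\leq\lambda/\pi,\ 2\pi|t|\leq\lambda^2\}$. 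The heart of the argument will be to show that $\st E$ is \emph{maximal} among $\rot$-invariant, $\Cn$-symmetric, $t$-convex sets of diameter $\leq\lambda$: if $[z,t]\notin \st E$ with $2\|z\|\leq\lambda$ lies below the profile of $A_\lambda$, one can enlarge $\st E$ at $[z,t]$ (adding a whole $\rot$-orbit together with its vertical reflection) without increasing the diameter, using the diameter-control computations of the type in Proposition~\ref{prop:diamA} and Lemma~\ref{lem:diamA}; this would strictly increase the measure, contradicting that $\st E$ is isodiametric. Conversely the same diameter computation, now via Lemma~\ref{lem:diamA} and relations~\eqref{e:d1}--\eqref{e:d2}, shows $\st E$ cannot protrude beyond the profile $l_\lambda$ of $A_\lambda$, since that would violate $\diam(\st E)\leq\lambda$. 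Combining the two inclusions gives $\st E = A_\lambda$.

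I expect the main obstacle to be the maximality/enlargement step, i.e. verifying that the extremal $\rot$-invariant $t$-convex set of diameter $\lambda$ that is symmetric about the $\Cn$-plane has \emph{exactly} the profile $l_\lambda$, rather than merely being contained in the slab of Lemma~\ref{lem:isodiamsubsetcyl}. The delicate point is the behavior on the annular region $\lambda/\pi \leq \|z\|\leq \lambda/2$, where the governing constraint switches from the flat ceiling $2\pi|t|\leq\lambda^2$ to the CC-ball profile $h_{\lambda/2}$; here the convexity properties of $h$ from Subsection~\ref{subsec:balls}(ii) (the existence of the inflection radius $r_c$) and the sign analysis of $h'$ appearing in~\eqref{e:diff-h} will be essential to rule out any further enlargement. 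The reflection isometry $\iota$ from~\eqref{eq:iota} and Proposition~\ref{prop:ballsproperties}(ii) will be the technical tools for controlling diameters of the enlarged sets throughout.
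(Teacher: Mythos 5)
Your skeleton matches the paper's up to the crucial step: Steiner symmetrization preserves $\leb$-measure and, by Lemma~\ref{lem:diamstE}, does not increase the diameter, whence $\st E\in\I_\rot$ with $\diam(\st E)=\diam E$ (note that $\si(E)=E$ holds not because $\si$ is one of the rotations $r_\theta$ --- it is not --- but because $\overline z$ lies in the same torus orbit as $z$; one should also check that $\st E$ is compact). The structure of $\st E$ via Theorem~\ref{thm:regisodiam} and the cylinder bound from Lemma~\ref{lem:isodiamsubsetcyl} are likewise as in the paper. Your ``maximality/enlargement'' step for the inclusion $A_{\diam E}\subset\st E$, which you flag as the main obstacle, is in fact the easy part and needs no enlargement construction: once $\st E\subset A_\lambda$ and $\diam A_\lambda=\lambda$ (Proposition~\ref{prop:diamA}) are known, maximality of $\leb(\st E)$ among sets of $\rot$ with diameter $\lambda$ forces $\leb(A_\lambda\setminus\st E)=0$, and closedness of $\st E$ upgrades this to $A_\lambda\subset\st E$. (An enlargement at a point below the profile could not even be justified before the outward containment is established, since one must control distances from the added point to all of $\st E$.)

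The genuine gap is in the step you dispatch as ``the same diameter computation, via Lemma~\ref{lem:diamA} and \eqref{e:d1}--\eqref{e:d2}'': showing that $\st E$ does not protrude above the ball profile $h_{\lambda/2}$ on the annulus $\lambda/\pi<\|z\|<\lambda/2$. The tools you cite cannot do this. Relations \eqref{e:d1}--\eqref{e:d2} only give $d([z,t],[-z,t])=2\|z\|<\lambda$ and $d([z,t],[z,-t])=(2\pi |t|)^{1/2}$, and the latter need not exceed $\lambda$ because $t$ can be as small as $h_{\lambda/2}(\|z\|)$, which tends to $0$ as $\|z\|\to\lambda/2$; Lemma~\ref{lem:diamA} concerns horizontal disks at a fixed height and is used in the paper only to prove $\diam A_\lambda=\lambda$. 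What is actually needed --- and what the paper uses --- is the geodesic structure of $\p B(0,\lambda/2)$: for $p=[z,h_{\lambda/2}(\|z\|)]$ with $\|z\|\in(\lambda/\pi,\lambda/2)$, i.e.\ geodesic parameter $\vp\in(-\pi/2,\pi/2)$, Juillet's lemma provides an explicit point $q$ with $d(p,q)=2d(0,p)=\lambda$, and this $q$ equals $[e^{i(\pi+2\vp)}z,-h_{\lambda/2}(\|z\|)]$, hence lies in $\st E$ by rotational invariance and symmetry about the $\Cn$-plane. A protrusion at $z$ forces $p\in\interior(\st E)$, and the openness of the distance function from $q$ (Lemma~\ref{lem:distopen}) then produces a point of $\st E$ at distance strictly greater than $\lambda$ from $q$, the desired contradiction. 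This antipodal-point mechanism --- which is also precisely where the threshold $\lambda/\pi$ in the definition of $l_\lambda$ comes from --- is the essential idea missing from your plan; without it, or an equivalent explicit computation with the group law and the profile \eqref{profile}, the inclusion $\st E\subset A_\lambda$ is not established.
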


\begin{proof}
First we note that since $E$ is compact $\st E$ is also
compact. Indeed, since $E$ is bounded, $\st E$ is also obviously bounded. Next the fact that $E$ is compact implies that the map $z\in\pi(E) \mapsto \lone (\{s\in\R~;~[z,s]\in E\})$ is upper semi-continuous. It follows that $\st E$ is closed and hence compact. Since $E\in \rot$, we have $\si(E) = E$ and it follows from Lemma~\ref{lem:diamstE} that $\diam(\st E)\leq \diam E$. On the other hand one has $\leb(\st E) = \leb(E)$. Since $E\in \I_\rot$ and $\st E \in \rot$, one actually has $\diam(\st E) = \diam E$ and $\st E \in \I_\rot$.
\smallskip

We set $\lambda = \diam E$ and we prove now that $\st E = A_\lambda$. Noting that $\pi(\st E)=\pi(E)$ and taking into account the fact that $\st E \in \I_\rot$ is also symmetric with respect to the $\Cn$-plane, it follows from Theorem~\ref{thm:regisodiam} that 
\begin{equation*}
 \st E = \{[z,t]\in \hn~;~ z\in\pi(E),~ |t|\leq f(z)\}
\end{equation*}
for some continuous map $f: \pi(E) \rightarrow [0,+\infty)$ and that the set
\begin{equation*}
 U =\{z\in \pi(E)~;~ f(z)>0\}
\end{equation*}
is open in $\Cn$. 
\smallskip

We have $\pi(E) \subset \{z\in\Cn~;~2 \|z\|\leq \lambda\}$ by Lemma~\ref{lem:isodiamsubsetcyl} and we prove now that $f(z) \leq l_\lambda(\|z\|)$ for all $z\in \pi(E)$. In case $\pi(E) \cap \{z\in\Cn~;~2\|z\| = \lambda\} \not= \emptyset$, we note that since $U$ is open we must have $f(z) = 0 = l_\lambda(\|z\|)$ for any $z\in \pi(E)$ such that $2\|z\| = \lambda$. Next we know from Lemma~\ref{lem:isodiamsubsetcyl} that $f(z) \leq \lambda^2/(2\pi) = l_\lambda(\|z\|)$ for all $z\in\pi(E)$ such that $\|z\|\leq \lambda /\pi$. 
\smallskip

It thus only remains to prove that $f(z) \leq l_\lambda(\|z\|)$ for
$z\in\pi(E)$ such that $\lambda / \pi< \|z\|< \lambda / 2$. Given such a
$z$ we assume by contradiction that $f(z) > l_\lambda(\|z\|) \geq 0$ and set $p = [z,l_\lambda(\|z\|)] \in A_{\lambda} \cap \p B(0,\lambda/2)$. Since $f$ is continuous and $z$ belongs to the open set $U$, one can find an open set $V\subset U \subset \pi(E)$ containing $z$ and such that $f(\|w\|)>l_\lambda(\|z\|)$ for all $w\in V$. Since $\{[w,s]\in\hn~;~w\in V,~|s|\leq f(\|w\|)\}\subset\st E$ it follows that $p\in \interior (\st E)$. Next, since $p\in \p B(0,\lambda/2)$ with $\|z\| \in ( \lambda / \pi,\lambda / 2)$, we can write $p$ as 
\begin{equation*}
 p = [\:\chi \: e^{-i\vp} \:\dfrac{\sin\vp}{\vp}~,~\dfrac{2\vp - \sin(2\vp)}{2\vp^2}\,\|\chi\|^2\:]
\end{equation*}
for some $\chi\in\Cn$ such that $\|\chi\| = \lambda/2$ and some $\vp
\in (-\pi/2,\pi/2)$ (see e.g.~\cite{juillet}). It follows from~\cite[Lemma 1.11]{juillet} that $d(q,p)=2d(0,p)=\lambda$ where 
\begin{equation*}
 q = [\:-\chi\: e^{i\vp} \:\dfrac{\sin\vp}{\vp}~,~ - \:\dfrac{2\vp - \sin(2\vp)}{2\vp^2}\,\|\chi\|^2\:]~.
\end{equation*}
Since $p \in \interior (\st E)$ and since the distance function from $q$ is an open map (see Lemma~\ref{lem:distopen}) we can find $p'\in \st E$ such that $d(q,p')>\lambda$. On the other hand we have 
\begin{equation*}
 q = [\:e^{i(\pi+2\vp)} z, -\:l_\lambda(\|z\|)\:].
\end{equation*}
Since $\st E\in\rot$ is symmetric with respect to the $\Cn$-plane, we
get that $q\in \st E$, i.e. a contradiction.
\smallskip

It follows that $\st E \subset A_{\lambda}$. Since $\diam \st E = \diam E = 
\lambda = \diam A_{\lambda}$ by 
Proposition~\ref{prop:diamA}, and since $ A_{\lambda} \in \rot$ and
$\st E \in \I_\rot$, we get that $\leb(A_{\lambda} \setminus \st E)=
0$. Being $\st E$ closed, we obtain that $\interior(A_{\lambda})\setminus \st E = \interior(A_{\lambda} \setminus \st E) =\emptyset$. It then follows that $A_{\lambda} = \overline{\interior(A_{\lambda})} \subset \st E$ and finally $ \st E = A_{\lambda}$ as wanted. 
\end{proof}

We now show that $A_\lambda$ can be perturbed near the $t$-axis in such a
way that the resulting set has the same volume and diameter as
$A_\lambda$. As we shall see, the class of such perturbations
is quite rich. It contains in particular rotationally and non rotationally invariant sets. 

\begin{prop}\label{prop:Aphi}
There exists $r \in (0, 1/\pi)$ such that for every $\lambda>0$ and for any Lipschitz function $f:\C^n\to \R$ with compact support in $\{z\in \C^n;\ \|z\|<\lambda r\}$ and with Lipschitz constant $\lip(f) <\pi \lambda r/4$, the set 
\[
A_{\lambda,f} = \{[z,t]\in \hn;\; \|z\|\leq \lambda/\pi,\ 2\pi |t -
f(z)| \leq \lambda^2\} \cup A_\lambda^2 
\]
satisfies $\leb(A_{\lambda,f}) = \leb(A_\lambda)$ and $\diam
A_{\lambda,f} = \diam A_\lambda$.
\end{prop}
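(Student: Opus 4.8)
The plan is to prove the two equalities separately, starting with the easier volume claim and then tackling the diameter. For the measure, I would observe that the perturbation only affects the ``core'' cylinder $A_\lambda^1$, which gets replaced by the set where $2\pi|t-f(z)|\leq\lambda^2$ over $\|z\|\leq\lambda/\pi$. For each fixed $z$ with $\|z\|\leq\lambda/\pi$, the vertical slice is the interval $[f(z)-\lambda^2/(2\pi),\,f(z)+\lambda^2/(2\pi)]$, which has the same length $\lambda^2/\pi$ as the unperturbed slice $[-\lambda^2/(2\pi),\,\lambda^2/(2\pi)]$. By Fubini's theorem the two core pieces have the same $\leb$-measure. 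Since the support of $f$ is contained in $\{\|z\|<\lambda r\}$ with $r<1/\pi$, the perturbation leaves the slices over $\lambda r\leq\|z\|\leq\lambda/\pi$ unchanged, and in particular the perturbed and unperturbed cores agree with $A_\lambda^2$ along $\|z\|=\lambda/\pi$, so no overlap or gap is created when gluing to $A_\lambda^2$. Hence $\leb(A_{\lambda,f})=\leb(A_\lambda)$.

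For the diameter, the plan is to reduce to $\lambda=1$ by the dilation argument $A_{\lambda,f}=\delta_\lambda(A_{1,f_\lambda})$ for a suitably rescaled $f_\lambda$ (checking that the Lipschitz and support constraints are preserved under this scaling), and then to mimic the structure of the proof of Proposition~\ref{prop:diamA}. I would split $A_{1,f}$ into the perturbed core and $A_1^2$, and bound the distance between pairs of points case by case. The key new feature is that points in the perturbed core have the form $[z,t]$ with $\|z\|<r$ small and $|t-f(z)|\leq 1/(2\pi)$, so $|t|\leq 1/(2\pi)+\lip(f)\cdot r$. The reason for imposing $r<1/\pi$ and $\lip(f)<\pi r/4$ is precisely to keep these perturbed points close enough to the $t$-axis that Lemma~\ref{lem:diamA} still applies: one needs the horizontal extent $\|z\|$ to stay below the threshold $d/\pi$ where the lemma's rigidity (forcing a competitor $q$ with $z_p=z_q$ on the same horizontal plane) holds.

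The main obstacle will be the estimate for pairs of points both lying in the perturbed core, and for pairs with one point in the perturbed core and one in $A_1^2$. The clean structure of the unperturbed case came from the fact that the two flat disks $D^\pm$ sat at constant heights $t=\pm1/(2\pi)$, so that distances between them could be computed exactly via formula~\eqref{e:d2}. Now the perturbation tilts these disks by $f$, so the top and bottom of the core are the graphs $t=f(z)\pm1/(2\pi)$. I would argue that because $\lip(f)$ is small and the support radius $r$ is small, any point of the perturbed core still lies inside $\overline B(0,1/2)$, or more precisely still satisfies the hypotheses $\diam(C^+\cup\{p\})\leq1$ needed to invoke Lemma~\ref{lem:diamA}; the Lipschitz bound $\lip(f)<\pi r/4$ is the quantitative input that makes the resulting distance estimate close the gap. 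The delicate point is verifying that after tilting, two points on the same vertical line $z_p=z_q$ in the core at heights differing by at most $1/\pi$ still have CC-distance at most $1$ via~\eqref{e:d2}, which holds since $(\pi\cdot 1/\pi)^{1/2}=1$; the perturbation preserves this vertical gap exactly, so the rigid part of the argument survives, and the horizontal displacements must be controlled by the smallness of $r$ and $\lip(f)$.
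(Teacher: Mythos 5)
Your volume argument is exactly the paper's (Fubini on vertical slices), and the reduction to $\lambda=1$ by dilation is also how the paper proceeds. The gap is in the diameter bound. Your plan is to rerun the proof of Proposition~\ref{prop:diamA} with the flat disks $D^\pm$ replaced by the tilted graphs $t=f(z)\pm 1/(2\pi)$, invoking Lemma~\ref{lem:diamA} to force a worst-case competitor $q$ with $z_q=z_p$ and then concluding via \eqref{e:d2}. But Lemma~\ref{lem:diamA} is stated and proved for sets $C,D$ lying in a single horizontal plane $\Cn\times\{t\}$: the rigidity $z_q=z_p$ comes from the fact that $q$ is a local minimum of $h^+_{p,\overline d}$ \emph{restricted to that plane}, which yields the first-order condition \eqref{e:diff-h}. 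Once the disk is tilted by $f$, the local-minimum condition picks up a $\nabla f$ term, \eqref{e:diff-h} no longer holds, and there is no reason the extremal pair should be vertically aligned; saying that ``the perturbation preserves the vertical gap exactly'' only covers the aligned case, which is precisely the case the lemma was needed to reduce to. (Also, your assertion that perturbed core points lie in $\overline B(0,1/2)$ is false even without perturbation: the core reaches height $1/(2\pi)$ on the axis while $h_{1/2}(0)=1/(4\pi)$.)

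The paper closes this gap with a different mechanism that your proposal is missing: an outer cone property of the unit ball at its poles. For $p$ with $\|z_p\|\leq \kappa/4$, $\kappa=1/\pi$, the profiles $h_p^\pm$ of $\overline B(p,1)$ open with slope at least $\kappa$ near $z_p$ (Claim 1 in the paper). Since $\lip(f)<\pi r/4<\kappa$, the ball centered at a point of the perturbed top graph swallows the entire perturbed bottom graph, and vice versa; this replaces the exact computation via \eqref{e:d2} and is where the hypothesis $\lip(f)<\pi\lambda r/4$ is actually used. Pairs with one point far from the axis are then handled by a separate compactness argument (Claim 2): points of $\partial A_1$ with $\|z_q\|\geq\overline r$ lie in the \emph{open} ball $B(p_0,1)\cap B(p_0^{-1},1)$, hence in $B(p,1)$ for all $p$ near the poles. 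Without these two ingredients --- the cone estimate beating $\lip(f)$, and the uniform strict inclusion away from the axis --- the case analysis you outline cannot be completed.
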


\begin{proof}
The fact that $\leb(A_{\lambda,f}) = \leb(A_\lambda)$ is a consequence of the definition
of $A_{\lambda,f}$ and of Fubini's theorem. To prove the last part of the
lemma, we assume that $\lambda=1$ without loss of generality. Since $A_1^2 \subset A_{1,f}$ and $\diam
A_1^2 = 1$ we have 
$\diam A_{1,f} \geq 1 = \diam A_1$. 
\smallskip

To complete the proof, we will show that the 
inequality $\diam A_{1,f} \leq 1 = \diam A_1$ holds up to a
suitable choice of $r$. First, for
a given $p\in \hn$, we define 
\[
h_{p}^\pm(z)  = \pm h(\|z-z_p\|) + t_p + 2 \im z_p \bar z,
\]
so that $\overline B(p,1) = \{[z,s]\in \hn~;~\|z-z_p\|\leq 1,~ h^-_{p}(z) \leq s \leq h^+_{p}(z)\}$. We set $\kappa = h'(0)/2 = 1/\pi$. 
\medskip

\textit{Claim $1$.} 
There exists $\overline r \in (0,1/2)$ such that, for all $p\in \hn$ such that $\|z_p\| \leq \kappa/4$, one has
\begin{equation} \label{e:eq+}
 h_{p}^+(z) \geq h_{p}^+(z_p) + \kappa\:\|z - z_p\| \;\; \text{for all } z\in\Cn \text{ such that } \|z-z_p\|\leq 2 \overline r
\end{equation}
and, similarly, 
\begin{equation} \label{e:eq-}
 h_{p}^-(z) \leq h_{p}^-(z_p) - \kappa\:\|z - z_p\| \;\; \text{for all } z\in\Cn \text{ such that } \|z-z_p\|\leq 2 \overline r.
\end{equation}

Indeed one can find $\overline r \in (0,1/2)$ such that 
\begin{equation*}
 |h(r) -h(0) - h'(0)\:r| \leq \dfrac{\kappa}{2}\:r
\end{equation*}
for all $r\leq 2\overline r$. Then we get 
\begin{equation*}
\begin{split}
 h_{p}^+(z) &= h(\|z-z_p\|) + t_p - 2 \scal{iz_p}{z-z_p}\\
&\geq h_{p}^+(z_p) + 2\kappa \:\|z-z_p\| -\dfrac{\kappa}{2}\:\|z-z_p\| - 2 \|z_p\| \|z-z_p\|\\
&\geq h_{p}^+(z_p) + \kappa\:\|z - z_p\|
\end{split}
\end{equation*}
for all $p\in \hn$ such that $\|z_p\| \leq \kappa/4$ and all $z\in\Cn$ such that $\|z-z_p\|\leq 2 \overline r$. Here the scalar product is that of $\R^{2n}$ after identifying points in $\Cn$ with points in $\R^{2n}$.
This gives \eqref{e:eq+}. The proof of \eqref{e:eq-} is similar.
\smallskip

We set $p_0= \big[0,1/(2\pi)\big]$.\smallskip

\textit{Claim $2$.} For all $\overline r >0$, there exists $\hat r>0$ such that
\begin{equation} \label{e:equation}
\text{if } p\in
B(p_0,\hat r)\cup B(p_0^{-1},\hat r) \text{ and }  q\in \partial A_{1}\setminus B(p,1)  \text{ then }  \|z_q\| < \overline r.
 \end{equation}

To prove this claim, we set 
\[
K = \partial A_1 \cap \{[z,t]\in \hn;\ \|z\|\geq \overline r\}\subset \overline B(p_0,1)\cap \overline B(p_0^{-1},1) 
\]
(recall that $p_0$, $p_0^{-1}\in A_1$ and $\diam A_1 =1$).
Since
\[
\partial B(p_0,1) \cap \partial A_1 = \{p_0^{-1}\} \quad \text{and} \quad \partial B(p_0^{-1},1) \cap \partial A_1 =\{p_0\}~,
\]
we actually have
\[ K \subset B(p_0,1)\cap B(p_0^{-1},1)~.
\]
By compactness of $K$ and continuity of $p \mapsto \max_{q\in K} d(p,q)$, one can then find $\hat r>0$ such that  
$K \subset B(p,1)\cap B(p',1)$ for any $p\in B(p_0,\hat r)$ and 
$p'\in B(p_0^{-1},\hat r)$. This proves Claim $2$.
\medskip

Fix $\overline r \in (0,1/\pi)$ such that \eqref{e:eq+} and \eqref{e:eq-} hold for all $p\in \hn$ satisfying $\|z_p\| \leq \kappa/4$. Then choose $\hat r>0$ such that \eqref{e:equation} holds. Set $r=\min(\overline r, 2\hat r/\pi, \kappa/4)$. Let $f:\C^n\to \R$ be a Lipschitz function
with compact support in $\{z\in \C^n;\ \|z\|< r\}$ and with Lipschitz constant 
$\lip(f) <\pi r/4$.
\smallskip

We have $\|f\|_\infty \leq \lip(f) \: r <  \pi r^2/4$ hence 
\begin{equation*}
 \begin{split}
  \partial A_{1,f} \setminus \partial A_1 &\subset \big\{[z,t]\in \hn~;~ \|z\|< r~,~ 4\:|t- \dfrac{1}{2\pi}|< \pi r^2\big\}  \\ 
&\phantom{....}\cup \big\{[z,t]\in \hn~;~ \|z\|< r~,~ 4\:|t+ \dfrac{1}{2\pi}|< \pi r^2\big\}\\
& \subset B(p_0,\pi r/2) \cup
B(p_0^{-1},\pi r/2)\\
&\subset B(p_0,\hat r)\cup B(p_0^{-1},\hat r)~.
 \end{split}
\end{equation*}

Now we take $p \in \partial A_{1,f}\setminus \partial A_1$ and $q\in \partial
A_{1,f}$. Without loss of generality, we
also assume that $p\in B(p_0,\pi r/2)$, the other case being analogous. Then 
\[p = [z_p,\frac{1}{2\pi} + f(z_p)]~.\]

If
$\|z_q\|\geq \overline r \geq r$ then $q\in \partial A_1$ and by \eqref{e:equation} we get $d(p,q)<
1$. If $\|z_q\|<\overline r < 1/\pi$ then 
\[t_q = \pm\frac{1}{2\pi} + f(z_q)~.\]
If $t_q = \dfrac{1}{2\pi} + f(z_q)$, since $\|z_q\|<\overline r$ and $|f(z_q)|< \pi r^2/ 4 \leq \pi \overline r^2/ 4$, we have $d(p_0,q)< \pi \overline r/2$. Therefore 
\[
d(p,q) < \dfrac{\pi}{2} (r+\overline r)  < 1~.
\]
If $t_q = - \dfrac{1}{2\pi} + f(z_q)$ we note that $\|z_p\| \leq \kappa/4$ and $\|z_p - z_q\| \leq r+\overline r \leq 2\overline r$. Then by \eqref{e:eq-} we get
\begin{equation*}
\begin{split}
 h_{p}^-(z_q) &\leq  h_{p}^-(z_p) - \kappa\:\|z_p - z_q\|\\
&= - \frac{1}{\pi} +t_p- \kappa\:\|z_p - z_q\|\\
&\leq - \frac{1}{2\pi} + f(z_p) - \lip(f)\:\|z_p - z_q\|\\
&\leq - \frac{1}{2\pi} +  f(z_q) = t_q~.
 \end{split}
\end{equation*}
Similarly, using \eqref{e:eq+}, 
\[
h_p^+(z_q) \geq h_p^+(z_p) + \kappa\:\|z_p - z_q\| \geq \frac{3}{2\pi} +
f(z_q) > t_q~.
\]
Hence we have 
\begin{equation*}
h_{p}^-(z_q) \leq t_q \leq h_p^+(z_q).
\end{equation*}
that is $q\in \overline B(p,1)$. 
\smallskip

It follows that $d(p,q)\leq 1$ for all $p \in \partial A_{1,f}\setminus \partial A_1$ and $q\in \partial
A_{1,f}$. Recalling that $\diam (\p A_1) = 1$ and that $\diam A_{1,f} = \diam (\p A_{1,f})$ this concludes the proof.

\end{proof}

We get from this proposition the following two consequences.
\smallskip

\begin{cor} \label{cor:nonuniqueness}
 There exists $E\in \I_\rot$ such that $p\cdot E \not= A_{\diam E}$ for all $p\in \hn$.
\end{cor}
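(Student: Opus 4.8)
The plan is to realize the required set as one of the volume- and diameter-preserving perturbations furnished by Proposition~\ref{prop:Aphi}, chosen to be rotationally invariant but \emph{not} symmetric with respect to the $\Cn$-plane. Normalize $\diam E = 1$, so that the set to beat is $A_{\diam E} = A_1$; note that $A_1 \in \I_\rot$ (apply Theorem~\ref{thm:rotisodiamsets} to any element of the nonempty class $\I_\rot$, cf. Theorem~\ref{thm:existence}), so $\leb(A_1) = C_{I,\rot}$. Then pick a Lipschitz function $f:\Cn\to\R$ depending only on $\|z\|$, not identically zero, with compact support in $\{\|z\| < r\}$ and $\lip(f) < \pi r/4$, where $r$ is the constant of Proposition~\ref{prop:Aphi} (for instance a small radial bump). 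Since $f$ is rotationally invariant, the set $E := A_{1,f}$ lies in $\rot$ and is compact, and by Proposition~\ref{prop:Aphi} it satisfies $\leb(E) = \leb(A_1) = C_{I,\rot}$ and $\diam E = 1$; hence $E\in\I_\rot$. It then remains to show that $p\cdot E \neq A_1$ for every $p = [z_0,t_0]\in\hn$.

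First I would reduce to vertical translations using the projection $\pi$. A left translation only shifts the horizontal part, so $\pi(p\cdot E) = z_0 + \pi(E)$, while $\pi(E) = \pi(A_1) = \{z\in\Cn~;~\|z\|\leq 1/2\}$ (the inner layer projects onto $\{\|z\|\leq 1/\pi\}$ and $A_1^2$ onto the complementary annulus). If $p\cdot E = A_1$, then $z_0 + \{\|z\|\leq 1/2\} = \{\|z\|\leq 1/2\}$, which forces $z_0 = 0$. Thus $p = [0,t_0]$ acts as the pure vertical shift $[z,t]\mapsto [z, t_0+t]$, and $p\cdot E = A_1$ would say precisely that $E$ is the downward vertical translate of $A_1$ by $t_0$.

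Finally I would extract a contradiction from symmetry. The set $A_1$ is symmetric with respect to the $\Cn$-plane, so its vertical translate $E$ would be symmetric with respect to the plane $\{t = -t_0\}$. Inspecting $E = A_{1,f}$ slice by slice in $z$: for $\|z\|$ in the annulus where $f$ vanishes and $h_{1/2}(\|z\|) > 0$ the slice is the symmetric interval $[-h_{1/2}(\|z\|), h_{1/2}(\|z\|)]$, whose only axis of symmetry is $t=0$, forcing $t_0 = 0$; but then for $\|z\|$ in the support of $f$ the slice is $[f(z) - 1/(2\pi), f(z) + 1/(2\pi)]$, centered at $f(z)$, and symmetry about $\{t=0\}$ forces $f(z) = 0$, contradicting $f\not\equiv 0$.

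Each step is elementary; the points deserving care — and where I would concentrate — are verifying that $f$ can be taken simultaneously rotationally invariant, nonzero, and admissible for Proposition~\ref{prop:Aphi}, and handling the Heisenberg twist in the left translation. The projection argument disposes of the twist cleanly, reducing the whole matter to the mismatch between the $\Cn$-symmetric (hence, after shifting, still slice-symmetric) set $A_1$ and the deliberately asymmetric $A_{1,f}$.
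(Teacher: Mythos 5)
Your proposal is correct and follows essentially the same route as the paper: take a nonzero rotationally invariant perturbation $A_{\lambda,f}$ from Proposition~\ref{prop:Aphi}, conclude $A_{\lambda,f}\in\I_\rot$ since $A_\lambda\in\I_\rot$ by Theorem~\ref{thm:rotisodiamsets}, and then rule out all left translates. The only (harmless) difference is cosmetic: you reduce to $z_p=0$ via the projection $\pi$ rather than via the observation that a bounded set in $\rot$ stays in $\rot$ under $p\cdot{}$ only if $z_p=0$, and you make explicit the slice-by-slice comparison that the paper leaves as ``straightforward from the analytic description.''
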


In other words, although there is uniqueness modulo Steiner symmetrization with respect to the $\Cn$-plane for sets in $\I_\rot$ with a given diameter, we have essential non uniqueness of sets in $\I_\rot$. 

\begin{proof}
Consider a set $A_{\lambda,f}$ given by Proposition~\ref{prop:Aphi} for some $\lambda>0$ and where $f\not\equiv 0$ is moreover chosen in such a way that $A_{\lambda,f}\in \rot$. By Theorem~\ref{thm:rotisodiamsets}, we have $A_\lambda\in \I_\rot$. On the other hand, by Proposition~\ref{prop:Aphi}, we have $\leb(A_{\lambda,f}) = \leb(A_\lambda)$ and $\diam
A_{\lambda,f} = \diam A_\lambda$. Therefore $A_{\lambda,f}\in \I_\rot$. 
\smallskip

Let us prove that $p\cdot A_{\lambda,f} \not= A_\lambda$ for all $p\in\hn$. First we note that if $F \in \rot$ and $F$ is bounded then $p\cdot F \in \rot$ if and only if $z_p=0$. Next it is straightforward from the analytic description of $A_\lambda$ and $A_{\lambda,f}$ that $p\cdot A_{\lambda,f} \not= A_\lambda$ for any $p\in \hn$ such that $z_p=0$. This concludes the proof.
\end{proof}

Another consequence of Theorem~\ref{thm:rotisodiamsets} and Proposition~\ref{prop:Aphi} is the existence of non-rotationally invariant isodiametric sets, even modulo left translations.

\begin{cor} \label{cor:nonrotinvariant}
 There exists $E\in \I$ such that $p\cdot E \not\in \rot$ for all $p\in \hn$.
\end{cor}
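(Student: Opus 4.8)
The plan is to argue by contradiction, exploiting a bootstrapping phenomenon: although it is not known a priori that $A_\lambda \in \I$, if \emph{every} isodiametric set were rotationally invariant up to a left translation, then the rotationally invariant isodiametric constant would coincide with the full one, which would force $A_\lambda \in \I$; a non rotationally invariant perturbation of $A_\lambda$ would then supply the desired counterexample inside $\I$ and contradict the assumption.

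First I would assume, towards a contradiction, that for every $E \in \I$ there exists $p \in \hn$ with $p\cdot E \in \rot$. Since $\I \neq \emptyset$ by Theorem~\ref{thm:existence}, and since left translations preserve $\leb$ (a Haar measure) and $d$ (left-invariant), hence also the diameter, picking any $E_0 \in \I$ and translating produces a set $E_0' \in \I \cap \rot$. Being an admissible competitor for $C_{I,\rot}$, it gives $C_{I,\rot} \geq \leb(E_0')/(\diam E_0')^{2n+2} = C_I$, while $C_{I,\rot}\leq C_I$ always holds; thus $C_{I,\rot}=C_I$. Combined with Theorem~\ref{thm:rotisodiamsets}, which yields $A_\lambda \in \I_\rot$ for every $\lambda>0$, this identity upgrades to $A_\lambda \in \I$.

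Next I would invoke Proposition~\ref{prop:Aphi} with a Lipschitz function $f$ chosen \emph{not} to be rotationally invariant, for instance a small bump $f(z)=c\,\eta(\|z-z_0\|)$ centered at an off-axis point $z_0\neq 0$ with support inside $\{\|z\|<\lambda r\}$ and with $c$ so small that $\lip(f)<\pi\lambda r/4$. Since $\leb(A_{\lambda,f})=\leb(A_\lambda)$ and $\diam A_{\lambda,f}=\diam A_\lambda$ by Proposition~\ref{prop:Aphi}, and since $A_\lambda \in \I$, the supremum defining $C_I$ is attained by the (compact) set $A_{\lambda,f}$, so $A_{\lambda,f}\in \I$.

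Finally, the crux is to show that $p\cdot A_{\lambda,f}\notin \rot$ for \emph{every} $p\in\hn$, which contradicts the standing assumption applied to $A_{\lambda,f}\in\I$ and thereby closes the argument. Writing $p=[z_p,t_p]$, I would first project to $\Cn$: one has $\pi(A_{\lambda,f})=\{z\in\Cn~;~\|z\|\le\lambda/2\}$ and $\pi(p\cdot A_{\lambda,f})=z_p+\{z\in\Cn~;~\|z\|\le\lambda/2\}$, whereas the projection of any set in $\rot$ must be invariant under the torus action $z\mapsto(e^{i\theta_1}z_1,\dots,e^{i\theta_n}z_n)$; since this action is by Euclidean isometries, the translated ball centered at $z_p$ is torus-invariant only when $z_p=0$. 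But then $p$ acts as a pure vertical translation $[z,t]\mapsto[z,t+t_p]$, which commutes with every $r_\theta$, so $p\cdot A_{\lambda,f}\in\rot$ if and only if $A_{\lambda,f}\in\rot$; inspecting the defining caps of $A_{\lambda,f}$ shows the latter holds if and only if $f$ is rotationally invariant, contrary to our choice. The main obstacle is conceptual rather than computational: it is the realization that $A_\lambda\in\I$ is unavailable directly, so the conclusion must be reached indirectly through the identity $C_I=C_{I,\rot}$ forced by the contradiction hypothesis; once this is in place, the geometric step collapses to the clean projection/vertical-translation dichotomy above.
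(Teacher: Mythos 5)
Your proof is correct and follows essentially the same route as the paper: both reduce to showing $A_\lambda\in\I$ via the observation that the contradiction hypothesis (equivalently, $\I\cap\rot\neq\emptyset$) forces $C_{I,\rot}=C_I$, then perturb with a non-rotationally-invariant $f$ via Proposition~\ref{prop:Aphi}, and finish with the identical projection/vertical-translation argument showing $p\cdot A_{\lambda,f}\notin\rot$ for all $p$. The only difference is packaging — you phrase it as one global contradiction where the paper does a direct case split on whether $\I\cap\rot$ is empty — which is logically equivalent.
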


\begin{proof}
 If $\I\cap\rot=\emptyset$ then there is nothing to prove. If  $\I\cap\rot\not=\emptyset$ then $\I_\rot = \I \cap \rot \subset  \I$. In particular it then follows from Theorem~\ref{thm:rotisodiamsets} that $A_\lambda \in \I$ for any $\lambda>0$. Let $A_{\lambda,f}$ be given by Proposition~\ref{prop:Aphi} where $f$ is moreover chosen in such a way that $A_{\lambda,f}\not\in \rot$. By Proposition~\ref{prop:Aphi}, we have  $\leb(A_{\lambda,f}) = \leb(A_\lambda)$ and $\diam
A_{\lambda,f} = \diam A_\lambda$, and hence $A_{\lambda,f}\in \I$. 
Next we check that $p\cdot A_{\lambda,f}\not\in \rot$ for all
$p\in\hn$. Assume by contradiction that $p\cdot A_{\lambda,f}\in \rot$ for some $p\in\hn$. Then we get in particular that $\rho_\theta(\pi(p\cdot A_{\lambda,f})) = \pi(p\cdot A_{\lambda,f})$ for all $\theta\in\R^n$ where $\rho_\theta$ is the rotation in $\Cn$ defined by $\rho_\theta(z)=(e^{i\theta_1} z_1,\cdots,e^{i\theta_n} z_n)$. On the other hand we have $\pi(p\cdot A_{\lambda,f}) = z_p + \{z\in\Cn~;~ 2\|z\|\leq \lambda\}$. All together this implies that $z_p=0$. Then we get that $A_{\lambda,f} = p^{-1}\cdot (p\cdot A_{\lambda,f})$ is the vertical left translation by $p^{-1} = [0,-t_p]$ of $p\cdot A_{\lambda,f} \in \rot$ and hence belongs to $\rot$, a contradiction.
\end{proof}



\end{document}